\def\[#1\]{\begin{align*}#1\end{align*}}
\def\be#1\ee{\begin{align}#1\end{align}}
\def\bea#1\eea{\begin{align}#1\end{align}}
\def\ben#1\een{\begin{align*}#1\end{align*}}
\newcommand{\diff}{d}
\newcommand{\n}[1]{\left\Vert #1\right\Vert}
\newcommand{\la}{\left\langle}
\newcommand{\ra}{\right\rangle}
\newcommand{\R}{\mathbb{R}}
\newcommand{\N}{\mathbb{N}}
\newcommand{\mc}[1]{\mathcal{#1}}
\newcommand{\s}{\mc}
\newcommand{\p}[2]{\frac{\partial #1}{\partial #2}}
\newcommand{\q}[2]{\frac{\partial^2 #1}{\partial #2^2}}
\newcommand{\pa}[1]{{{\partial}\over{\partial #1}}}
\newcommand{\dif}[1]{\frac{\diff}{\diff #1}}
\def\ol#1{\overline{#1}}
\newcommand{\longto}{\mathop{\longrightarrow}}
\newcommand{\diam}{\mathrm{diam}}
\def\[#1\]{\begin{align*}#1\end{align*}}
\def\be#1\ee{\begin{align}#1\end{align}}
\def\bea#1\eea{\begin{align}#1\end{align}}
\def\ben#1\een{\begin{align*}#1\end{align*}}
\newtheoremstyle{theorem}{0.5cm}{0.5cm}%
   {}
   {}
   {\bfseries}
   {}
   {2ex}
   {\thmname{#1}\thmnumber{ #2}\thmnote{ #3}}
\theoremstyle{theorem}
\newtheorem{theorem}{Theorem}[section]
\newtheorem{proposition}[theorem]{Proposition}
\newtheorem{example}[theorem]{Example}
\newtheorem{corollary}[theorem]{Corollary}
\newtheorem{remark}[theorem]{Remark}
\newtheorem{lemma}[theorem]{Lemma}
\begin{document}

\title{On Existence of Spatially Regular Strong Solutions for a Class of Transport Equations}
 
\author{Jouko Tervo}
\author{Petri Kokkonen$^1$}
\address{$^1$University of Eastern Finland,
Department of Technical Physics, Kuopio, Finland}
\email{$^1$petri.kokkonen@uef.fi}

\maketitle

\begin{abstract}
The paper considers existence of spatially regular solutions for a class of  
linear Boltzmann transport equations.
The related transport problem is an (initial) inflow boundary value problem. 
This problem is characteristic with variable multiplicity, that is, the rank of 
the boundary matrix (here a scalar) is not constant on the boundary.
It is known that for these types of (initial) boundary value problems
the full higher order Sobolev regularity cannot generally be established.
In this paper we present  Sobolev regularity results for solutions of
linear Boltzmann transport problems
when the data belongs to appropriate anisotropic Sobolev spaces
whose elements are zero on the inflow and characteristic
parts of the boundary.
\end{abstract}

\section{Introduction}\label{intro}

Various linear and non-linear transport equations have been widely used as basic  models e.g. in fluid mechanics and particle physics. Usually the solution must obey certain initial and boundary value conditions.
It is known that the solutions of these types of problems have poor or limited (Sobolev) regularity properties.
These regularity issues may be subtle even for solutions
of a pure time-dependent advection equation ${\p {u}{t}}+c\cdot\nabla_xu=f$.

We consider regularity of solutions for certain classes of
linear {\it continuous slowing down Boltzmann transport equations} of the form
\be\label{i1}
a(x,E){\p \psi{E}}(x,\omega,E)
+\omega\cdot\nabla_x\psi(x,\omega,E)+\Sigma(x,\omega,E)\psi(x,\omega,E)-(K_{r}\psi)(x,\omega,E)=f(x,\omega,E)
\ee
for $(x,\omega,E)\in G\times S\times I$.
The cases $a=0$ and $a\neq 0$ will be studied separately.

Here $G\subset\R^3$ is the spatial domain,  $S\subset\R^3$ is the unit sphere (velocity direction domain) and $I=[E_0,E_m]$ is the energy interval.
In addition, the solution $\psi$ satisfies the inflow boundary condition
\be\label{i2}
{\psi}_{|\Gamma_-}=g,
\ee
where $\Gamma_-$ is the \emph{inflow boundary}
$\{(y,\omega,E)\in (\partial G)\times S\times I\ |\ \omega\cdot\nu(y)<0\}$ (see Section \ref{prem} below)
and the initial condition (if $a\neq 0$)
\be\label{i1-i}
\psi(\cdot,\cdot,E_m)=0,
\ee
where $E_m$ is the so-called {\it cut-off energy}.
The conditions (\ref{i2}) and (\ref{i1-i}) guarantee that the overall initial inflow boundary value problem (\ref{i1}), (\ref{i2}), (\ref{i1-i}) is (under relevant assumptions)  well-posed.

The \emph{restricted collision operator} $K_r$ is a partial integral type operator
(for some details we refer to \cite[section 5]{tervo18-up}) of the form
\begin{multline}\label{i2-k}
K_r\psi=
\int_{S'}\int_{I'}\sigma^1(x,\omega',\omega,E',E)\psi(x,\omega',E') dE' d\omega'
+
\int_{ S'}\sigma^2(x,\omega',\omega,E)\psi(x,\omega',E) d\omega'
\\
+
\int_{I'}\int_{0}^{2\pi}
\sigma^3(x,E',E)
\psi(x,\gamma(E',E,\omega)(s),E')ds dE'=:K_r^1\psi+K_r^2\psi+K_r^3\psi.
\quad
\end{multline}
For clarity we denote $S=S'$ and $I=I'$ when the integration variables
are $\omega'$ and $E'$, respectively.
In this paper, we confine ourselves mainly to the case where $K_r=K_r^2$.
The function $f=f(x,\omega,E)$ in (\ref{i1}) represents the internal source
and $g=g(y,\omega,E)$ in \eqref{i2} represents the inflow boundary source.
The solution $\psi$ of the problem \eqref{i1}, \eqref{i2}, \eqref{i1-i} 
describes the fluence of the considered particle type. 

We assume throughout that the spatial dimension $n=3$ ($G\subset\R^3$)
even though this is by no means essential.
In addition, we restrict to the case where the Lebesgue index $p=2$
in $L^p$-based spaces.
The regularity results are formulated by using certain anisotropic Sobolev spaces 
$H^{(m_1,m_2,m_3)}(G\times S\times I)$ (section \ref{fs}) which are relevant in formulations of such results for transport equations
because they allow for different degrees of regularity
with respect to the variables $x$, $\omega$ and $E$.
In this paper, we consider only the regularity with respect to the spatial 
variable $x$ and so $m_2=m_3=0$.

The equation (\ref{i1}) is a special case of the equation
\be\label{i1-a}
a(x,E){\p \psi{E}}
+c(x,E)\Delta_S\psi
+d(x,\omega,E)\cdot\nabla_S\psi
+\omega\cdot\nabla_x\psi+\Sigma(x,\omega,E)\psi-K_{r}\psi=f,
\ee
where $\Delta_S$ is the Laplace-Beltrami operator and $\nabla_S$ is the gradient operator on $S$.
In \cite{tervo19} we have given some reasons to use an equations like (\ref{i1-a}) for modelling charged particle transport
(for example for the propagation of electrons and positrons) in dose calculation for radiation therapy.  
The starting point is that the differential cross sections for charged particles may contain hyper-singularities with respect to energy 
variable and hence the corresponding exact (original) collision operator is  a partial {\it hyper-singular integral operator}.
This singular  integral operator can be reasonably approximated by partial integro-differential operator which leads to an equation of the form (\ref{i1-a}). 
In practice the realistic dose calculation model consists of the coupled system of equations like (\ref{i1-a}) since the simultaneous evolution 
of several species of particles (electrons, positrons, photons) must be modelled.
  
The existence and uniqueness of solutions for various classes of (initial) inflow boundary value problems like \eqref{i1}, \eqref{i2}, \eqref{i1-i} is well understood 
(e.g. \cite{dautraylionsv6}, \cite{agoshkov}, \cite{frank10}, \cite{tervo18-up}, 
 \cite{tervo18}). The regularity issues of solutions are much more poorly understood. 
A well-known difficulty in the initial boundary  value theory of first-order PDEs is that the equation may be characteristic on the boundary (the so-called boundary matrix has a non-zero kernel).
Especially the problems in which the rank of the boundary matrix is not constant on the boundary
(the so-called problems with \emph{variable multiplicity}) are challenging
(e.g.  \cite{rauch85}, \cite{nishitani96}, \cite{nishitani98},  \cite{takayama02}).
The problem considered in this paper is with variable multiplicity. The other challenge in the present case is the inclusion of the restricted collision operator $K_r$ which is a (non-local) partial integral operator.
The equation in the problem is not a pure PDE.
The full higher order Sobolev regularity can not be expected for the characteristic  initial boundary value problems.
However, partial (more precisely, tangential) regularity can be proved for problems with constant multiplicity (\cite{rauch85}, \cite{morando09}). For problems with variable multiplicity even this partial (tangential) regularity need not be valid.

The application of general theory of  initial boundary  value problems related to the systems of first-order PDEs yields some foundations and results for the regularity of solutions of transport problems.
For certain problems with variable multiplicity higher order  regularity can be shown when the data is appropriately restricted.
One such result can be found in \cite{nishitani96}, the idea of which we shall utilize in the section \ref{nis} below.
In certain weighted co-normal Sobolev spaces the higher order (tangential) regularity can be achieved with less restrictive data (see e.g. \cite{nishitani98}).

There are also some regularity results specific to initial inflow boundary value transport problems. We mention some of them.
In \cite[Chapter 4]{agoshkov} regularity results for solutions
are presented for certain mono-kinetic Boltzmann transport equations (BTEs).
Some collision operators obey the so-called \emph{smoothing property} (\cite{golse}).
In \cite{chen20} one has applied the  smoothing property to derive regularity results.
An iteration formula together with the smoothing property of the collision operator has been used there to prove the regularity of solutions
with respect to the spatial variables up to order $1-\epsilon$ for any $0<\epsilon<1$ for certain transport problems.
In \cite{guo17} one has obtained the first order ($W^{p,1}$ or weighted $W^{p,1}$) regularity results for some non-linear time-dependent BTEs.  
A similar kind of non-linear equation along with its linearized equation (section 4.2 therein)
have been studied in \cite{alexandre} in the global case $G=\R^3$.
There one has obtained, under certain assumptions, weighted regularity of solutions with respect to all variables.
In \cite{chen23} one has applied Neumann series to find spatially $H^1$-regular solutions when the diameter of the domain $G$ is small enough.
In \cite{tervo19-b} we obtained some regularity results
for the BTEs like considered here in the global case $G=\R^3$. Therein we applied 
explicit formulas for solutions of certain types of equations
and the difference method for more general equations.
We remark that in the global case $G=\R^3$ the regularity issues are less challenging and
the "regularity of solutions may increase according to the data". 
When $G\neq \R^3$ 
the corresponding  initial inflow boundary value problem (\ref{i1}), (\ref{i2}), (\ref{i1-i})  is characteristic with variable multiplicity (as mentioned above) which leads to limited regularity of solutions.
In fact, it is known that the regularity of the solutions of the transport equations
with respect to (the spatial) $x$-variable, for example,
is in general limited up to the space
$H^{(s_1,0,0)}(G\times S\times I)$ with $s_1<3/2$ regardless of the smoothness of the data 
(see the counterexample given in \cite{tervo17-up}, section 7.1).
In \cite{tervo23}  we studied regularity of solutions in bounded domains by using various methods. At first we applied the well-known explicit formulas of solutions for equations without the restricted collision operator $K_r$.  
After that,
in section 5  we considered  regularity of solutions   for an equation  involving 
the integral operator  $K_r$.
The techniques were based on Neumann series.
Furthermore, the treatments therein were confined to special cases of transport equations (\ref{i1}) and the data ($f,\ g$) was appropriately restricted. 

This paper is organized as follows. After preliminaries (section \ref{prem}),
we show in section \ref{nis} results of existence of solutions in spaces $H_0^{(m,0,0)}(G\times S\times I,\Gamma_-)$ which are completions of
\[
\bigcap_{k=1}^\infty
\{ \psi\in C_0^k(\ol G, C( S\times I))\ |
\ {\rm supp}(\psi)\cap (\Gamma_- \cup\Gamma_0)=\emptyset \}
\] 
with respect to $H^{(m,0,0)}(G\times S\times I)$-norms.
The formulations are related to those in \cite{nishitani96}.
At first, the existence of $H_0^{(m,0,0)}(G\times S\times I,\Gamma_-)$-solutions is shown in the case $a(x,E)=0$.
The proof is based on the $m$-dissipativity of the smallest closed extension (in 
the spaces $H_0^{(m,0,0)}(G\times S\times I,\Gamma_-)$) of the transport operator. 
Then the existence of $C^1(I,H_0^{(m,0)}(G\times S,\Gamma_-'))$-solutions
is deduced for the complete equation (\ref{i1}) by applying the theory of 
evolution equations
(the space $H_0^{(m,0)}(G\times S,\Gamma'_-)$ is defined similarly
to $H_0^{(m,0,0)}(G\times S\times I,\Gamma_-)$).
We remark that in these formulations the \emph{data $f$ must 
be restricted to the space} $H_0^{(m,0,0)}(G\times S\times I,\Gamma_-)$. Moreover, the 
inflow boundary data $g=0$ due to the definition of the space $H_0^{(m,0,0)}(G\times S\times I,\Gamma_-)$. 
Finally, the appendix (section 4) contains some additional proofs.

\section{Preliminaries}\label{prem}

We assume that $G$ is an open bounded set in $\R^3$ whose boundary $\partial G$  belongs to $C^{\infty}$
in the sense that $\ol{G}$ is a $C^\infty$-submanifold of $\R^3$ with boundary.
(see e.g. \cite[Section 1.2]{grisvard}, \cite[Chapter 1]{lee:2013}).
Moreover, we assume that the closure $\ol{G}$ of $G$ is \emph{strictly convex}
(consequently $G$ is convex).

The unit outward pointing normal on $\partial G$ is denoted by $\nu$. The surface measure on $\partial G$ is $d\sigma$.
Let $S=S_2$ be the unit sphere in $\R^3$ equipped with the usual surface measure $d\omega$.
Furthermore, let $I$ be an interval $[E_0,E_m]$  of $\R$  where
$E_0\geq 0, \ E_0<E_m<\infty$. The interior $]E_0,E_m[$ is denoted by $I^\circ$.
We also denote
\[
\Gamma:=\partial G\times S\times I,
\quad
\Gamma':=\partial G\times S,
\]
and
\[
\Gamma_{-}:=\{(y,\omega,E)\in \partial G\times S\times I\ |\ \omega\cdot\nu(y)<0\},
\quad
\Gamma'_{-}:=\{(y,\omega)\in \partial G\times S\ |\ \omega\cdot\nu(y)<0\}
\]
\[
\Gamma_{+}:=\{(y,\omega,E)\in \partial G\times S\times I\ |\ \omega\cdot\nu(y)>0\},
\quad
\Gamma'_{+}:=\{(y,\omega)\in \partial G\times S\ |\ \omega\cdot\nu(y)>0\}.
\] 
\[
\Gamma_{0}:=\{(y,\omega,E)\in \partial G\times S\times I\ |\ \omega\cdot\nu(y)=0\},
\quad
\Gamma'_{0}:=\{(y,\omega)\in \partial G\times S\ |\ \omega\cdot\nu(y)=0\}.
\] 

For  $(x,\omega)\in G\times  S$ the {\it escape time mapping $t(x,\omega)$ } is defined by
\be\label{def:t:1}
t(x,\omega)&=\inf\{s>0\ |\ x-s\omega\not\in G\}\\ \nonumber
&=\sup\{t>0\ |\ x-s\omega\in G\ {\rm for\ all}\ 0< s <t\}.
\ee
Furthermore, for $(y,\omega)\in \Gamma'_-\cup\Gamma'_+$ the {\it escape-time mapping $\tau_\pm(y,\omega)$ (from boundary to boundary) } is defined by
\bea\label{def:tau_pm:1}
&
\tau_-(y,\omega):=
\inf\{s>0\ |\ y+s\omega\not\in G\},\quad (y,\omega)\in \Gamma'_-\\
&
\tau_+(y,\omega):=
\inf\{s>0\ |\ y-s\omega\not\in G\},\quad (y,\omega)\in \Gamma'_+.
\nonumber
\eea
We find that (see e.g. Lemma \ref{le:basic_geometric_properties:1} below for why $\tau_{\pm}(y,\omega)>0$)
\[
0<t(x,\omega)\leq d:=\diam(G),\quad 0<\tau_{\pm}(y,\omega)\leq d,
\]
where $\diam(G)$ is the diameter of $G$.

\subsection{Basic Function Spaces}\label{fs}

All function spaces below are real valued.
Define the  space $W^2(G\times S\times I)$  by
\bea\label{fseq1}
W^2(G\times S\times I)
=\{\psi\in L^2(G\times S\times I)\ |\  \omega\cdot\nabla_x \psi\in L^2(G\times S\times I) \}.
\eea
The space $W^2(G\times S\times I)$ is equipped  with the inner product
\be\label{fs4}
\la {\psi},v\ra_{W^2(G\times S\times I)}=\la {\psi},v\ra_{L^2(G\times S\times I)}+
\la\omega\cdot\nabla_x\psi,\omega\cdot\nabla_x v\ra_{L^2(G\times S\times I)}.
\ee
Then $W^2(G\times S\times I)$  is a Hilbert space.
Let for $k\in\N\cup\{\infty\}$
\begin{align}\label{eq:def:C^(k,0,0):1}
C^{(k,0,0)}(G\times S\times I^\circ)
:=\{{}& f:G\times S\times I^\circ\to\R
\ |\ \textrm{all partial derivatives} \\
{}& \textrm{$(x,\omega,E)\mapsto \partial^\alpha_x f(x,\omega,E)$, $|\alpha|\leq k$, exist and} \nonumber \\
{}& \textrm{are continuous on $G\times S\times I^\circ$}\} \nonumber \\
C^{(k,0,0)}_0(\R^3\times S\times\R)
:=\{{}& f\in C^{(k,0,0)}(\R^3\times S\times \R)\ |\ \textrm{support of $f$ is compact in $\R^3\times S\times \R$}\}
\nonumber
\end{align}
and
\begin{align}\label{eq:def:C^k_ol_G:1}
C^k(\ol G\times S\times I):={}& \{f_{|G\times S\times I^\circ}\ |\ f\in C_0^k(\R^3\times S\times\R)\} \\
C^{(k,0,0)}(\ol G\times S\times I):={}& \{f_{|G\times S\times I^\circ}\ |\ f\in C_0^{(k,0,0)}(\R^3\times S\times\R)\}
\nonumber
\end{align}
The space $C^1(\ol G\times S\times I)$ is a dense subspace of $W^2(G\times S\times I)$.
Clearly, $C^k(\ol G\times S\times I)\subset C^{(k,0,0)}(\ol G\times S\times I)$ for all $k$.

The space of $L^2$-functions on $\Gamma_-$ with respect to the measure
$|\omega\cdot\nu|\, d\sigma d\omega dE$  is denoted by $T^2(\Gamma_-)$
that is, $T^2(\Gamma_-)=L^2(\Gamma_-,|\omega\cdot\nu|\, d\sigma d\omega dE)$.
$T^2(\Gamma_-)$   is a Hilbert space and its natural inner product is
\be\label{fs8}
\la g_1,g_2\ra_{T^2(\Gamma_-)}=\int_{\Gamma_-}g_1(y,\omega,E)g_2(y,\omega,E)|\omega\cdot\nu|\, d\sigma(y) d\omega dE.
\ee
In patches $\Gamma_-^i=\{(y,\omega,E)\in (\partial G)_i\times S\times I\ |\ \omega\cdot\nu(y)<0\}$ the inner product \eqref{fs8} is computed by
\[
\la g_1,g_2\ra_{T^2(\Gamma_-^i)}
=\int_{S\times I}\int_{U_{i,-,\omega}}g_1(h_i(z),\omega,E)g_2(h_i(z),\omega,E) |\omega\cdot\nu(h_i(z))| \n{(\partial_{z_1}h_i\times\partial_{z_2}h_i)(z)} dz d\omega dE
\]
where $U_{i,-,\omega}:=\{z\in U_i|\ \omega\cdot\nu(h_i(z))<0\}$ and $h_i:= \varphi_i^{-1}$ (a local parametrization)
when $(U_i,\varphi_i)$ is a chart of $(\partial G)_i$.

The spaces $T^2(\Gamma_+)$ and $T^2(\Gamma)$ and their inner products are similarly defined.
In addition, we define the (Hilbert) space
\[
T_{ \tau_{\pm}}^2(\Gamma_{\pm})=L^2(\Gamma_{\pm},\tau_{\pm}(\cdot,\cdot)|\omega\cdot\nu|\ d\sigma d\omega dE)
\]
where the canonical inner product
\[
\la g_1,g_2\ra_{T_{\tau_{\pm}}^2(\Gamma_\pm)}=\int_{\Gamma_\pm}g_1(y,\omega,E)g_2(y,\omega,E) \tau_{\pm}(y,\omega)|\omega\cdot\nu|\ d\sigma(y) d\omega dE
\]
is used.

Since $\tau_{\pm}(y,\omega)\leq \diam(G)=d<\infty$ we obtain
from \cite{dautraylionsv6}, p. 252 
or \cite{tervo18-up}, Theorem 2.16 

\begin{theorem}\label{tth}
The inflow trace operators 
\[
\gamma_{\pm}:W^2(G\times S\times I)\to T_{\tau_{\pm}}^2(\Gamma_{\pm}) \ {\rm defined\ by}\ \gamma_{\pm}(\psi):=\psi_{|\Gamma_{\pm}}
\] 
are (well-defined) bounded surjective operators and they have bounded right inverses $L_\pm:T^2_{\tau_\pm}(\Gamma_\pm)\to { W}^2(G\times S\times I)$ that is, $\gamma_\pm\circ L_\pm=\s I$ (the identity map). The operators $L_\pm$  are called {\it lifts}.
\end{theorem}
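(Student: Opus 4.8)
The plan is to reduce the assertion to a one-dimensional statement along the chords of $G$ in a fixed direction, and then to integrate this over $\omega$ and $E$ using the convexity of $G$. Fix $\omega\in S$ and set $\Gamma'_{-,\omega}:=\{y\in\partial G\ |\ \omega\cdot\nu(y)<0\}$ and $\Gamma'_{+,\omega}:=\{y\in\partial G\ |\ \omega\cdot\nu(y)>0\}$. By strict convexity and $C^\infty$-smoothness of $\ol G$ the Gauss map $\partial G\to S$ is a diffeomorphism, so the grazing set $\{y\in\partial G\ |\ \omega\cdot\nu(y)=0\}$ is $d\sigma$-null, and the chord parametrization
\[
\Phi_\omega\colon\{(y,s)\ |\ y\in\Gamma'_{-,\omega},\ 0<s<\tau_-(y,\omega)\}\longto G,\qquad\Phi_\omega(y,s):=y+s\omega,
\]
is a bijection onto $G$ up to a Lebesgue-null set, with the change of variables $dx=|\omega\cdot\nu(y)|\,d\sigma(y)\,ds$. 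For $\gamma_+$ one uses instead $\Phi^+_\omega(y,s):=y-s\omega$ with $y\in\Gamma'_{+,\omega}$ and $0<s<\tau_+(y,\omega)$, with the same Jacobian. These are exactly the geometric facts contained in Lemma \ref{le:basic_geometric_properties:1}.

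First I would prove the a priori estimate on the dense subspace $C^1(\ol G\times S\times I)\subset W^2(G\times S\times I)$. For $\psi\in C^1(\ol G\times S\times I)$, $y\in\Gamma'_{-,\omega}$ and $0<s<\tau_-(y,\omega)$, the fundamental theorem of calculus along the chord gives $\psi(y+s\omega,\omega,E)=\psi(y,\omega,E)+\int_0^s(\omega\cdot\nabla_x\psi)(y+r\omega,\omega,E)\,dr$. Integrating this identity over $s\in(0,\tau_-(y,\omega))$, rearranging to express $\tau_-(y,\omega)\psi(y,\omega,E)$, and applying the Cauchy--Schwarz inequality together with $\tau_-(y,\omega)\leq d$ gives the pointwise bound
\[
\tau_-(y,\omega)\,|\psi(y,\omega,E)|^2\leq 2\int_0^{\tau_-(y,\omega)}|\psi(y+s\omega,\omega,E)|^2\,ds+2d^2\int_0^{\tau_-(y,\omega)}|(\omega\cdot\nabla_x\psi)(y+s\omega,\omega,E)|^2\,ds.
\]
Multiplying by $|\omega\cdot\nu(y)|$, integrating over $(y,\omega,E)\in\Gamma_-$, and applying the change of variables $\Phi_\omega$ to turn the right-hand side back into integrals over $G\times S\times I$, one gets $\n{\gamma_-\psi}_{T^2_{\tau_-}(\Gamma_-)}^2\leq 2\n{\psi}_{L^2(G\times S\times I)}^2+2d^2\n{\omega\cdot\nabla_x\psi}_{L^2(G\times S\times I)}^2\leq C(d)\n{\psi}_{W^2(G\times S\times I)}^2$ by (\ref{fs4}), and symmetrically for $\gamma_+$ via $\Phi^+_\omega$. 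As $C^1(\ol G\times S\times I)$ is dense in $W^2(G\times S\times I)$, $\gamma_\pm$ then extend uniquely to bounded operators into $T^2_{\tau_\pm}(\Gamma_\pm)$. That these extensions are the genuine restrictions follows from Fubini in the coordinates $\Phi_\omega$: if $\psi\in W^2(G\times S\times I)$ then $\partial_s(\psi\circ\Phi_\omega)\in L^2$, so for $d\sigma\,d\omega\,dE$-a.e.\ $(y,\omega,E)$ the map $s\mapsto\psi(y+s\omega,\omega,E)$ lies in $H^1(0,\tau_-(y,\omega))$ and thus has well-defined endpoint traces, which are $\gamma_-\psi$ at $s=0$ and $\gamma_+\psi$ at $s=\tau_-(y,\omega)$.

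For surjectivity I would exhibit the lifts explicitly as constant extensions along chords: set $(L_-g)(y+s\omega,\omega,E):=g(y,\omega,E)$ for $y\in\Gamma'_{-,\omega}$ and $0<s<\tau_-(y,\omega)$, and define $L_+g$ analogously via $\Phi^+_\omega$. The change of variables $\Phi_\omega$ gives $\n{L_\pm g}_{L^2(G\times S\times I)}=\n{g}_{T^2_{\tau_\pm}(\Gamma_\pm)}$, and since $L_\pm g$ is, for each fixed $(\omega,E)$, constant along the lines of direction $\omega$ meeting $G$, one has $\omega\cdot\nabla_x(L_\pm g)=0$ in $L^2$; this is checked first for $g$ the restriction of a smooth function and then for general $g$ by density in $T^2_{\tau_\pm}(\Gamma_\pm)$. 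Hence $L_\pm g\in W^2(G\times S\times I)$ with $\n{L_\pm g}_{W^2(G\times S\times I)}=\n{g}_{T^2_{\tau_\pm}(\Gamma_\pm)}$, and $\gamma_\pm L_\pm g=g$ since $L_\pm g$ equals $g$ identically along each chord, in particular at its endpoint lying on $\Gamma_\pm$. In particular each $\gamma_\pm$ is surjective with bounded right inverse $L_\pm$.

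The only genuinely non-routine ingredient is the geometric one: that for every $\omega\in S$ the chord map $\Phi_\omega$ is an a.e.\ measure-preserving parametrization of $G$ with Jacobian $|\omega\cdot\nu(y)|$, and that the grazing set is $d\sigma$-negligible. This is precisely where strict convexity and $C^\infty$-smoothness of $\partial G$ enter (they guarantee that every line in direction $\omega$ meets $\ol G$ in a single segment and that the Gauss map is a diffeomorphism), and it is what Lemma \ref{le:basic_geometric_properties:1} provides; everything after that is Cauchy--Schwarz, Fubini and the stated density of $C^1(\ol G\times S\times I)$ in $W^2(G\times S\times I)$.
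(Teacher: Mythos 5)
Your proof is correct and is essentially the standard argument that the paper itself delegates to its references (\cite{dautraylionsv6}, p.~252, and \cite{tervo18-up}, Theorem 2.16): the chord parametrization of $G$ with Jacobian $|\omega\cdot\nu(y)|$, the fundamental-theorem-of-calculus estimate along chords combined with Cauchy--Schwarz, and the constant-along-chords lift, which coincides with the paper's explicit $L_-$ of Remark \ref{lift} with $\lambda=0$ (an isometry, exactly as you compute). One small inaccuracy: strict convexity plus smoothness of $\partial G$ does not make the Gauss map a diffeomorphism (the second fundamental form may degenerate), so your justification that the grazing set is $d\sigma$-null is not quite right --- but this is harmless, since the weight $|\omega\cdot\nu|$ vanishes on that set anyway and Lemma \ref{le:basic_geometric_properties:1}(iv) already guarantees that the chord parametrization from $\Gamma'_{-,\omega}$ covers all of $G$ with non-grazing endpoints.
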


\begin{remark}\label{lift}
The lift $L_-$ can be given explicitly, for instance by
\[
(L_-g)(x,\omega,E)=e^{-\lambda t(x,\omega)}g(x-t(x,\omega)\omega,\omega,E)
\]
where $\lambda\geq 0$. For $\lambda=0$ the  lift $L_-$ is an isometrics $L_-:T^2_{\tau_-}(\Gamma_-)\to { W}^2(G\times S\times I)$. Note that the lift $L_-$ is not unique. Analogous observations are valid for $L_+$.
\end{remark}

From Theorem \ref{tth} it follows   that
the trace operators $\gamma_\pm:W^2(G\times S\times I)\to \ L^2_{\rm loc}(\Gamma_\pm,
|\omega\cdot\nu|\ d\sigma d\omega dE)$ are continuous. 
However, the trace $\gamma(\psi),\ \psi\in { W}^2(G\times S\times I)$ is not necessarily in the space $T^2(\Gamma)$.  Hence we define the spaces
\be\label{fs12}
\widetilde{ W}^2(G\times S\times I)=\{\psi\in { W}^2(G\times S\times I)\ |\  \gamma(\psi)\in T^2(\Gamma) \}.
\ee
The space $\widetilde { W}^2(G\times S\times I)$ is equipped with the inner product
\be\label{fs14}
\la \psi,v\ra_{\widetilde { W}^2(G\times S\times I)}=\la \psi,v\ra_{{ W}^2(G\times S\times I)}+ \la \gamma(\psi),\gamma(v)\ra_{T^2(\Gamma)}.
\ee
Then $\widetilde { W}^2(G\times S\times I)$ is a Hilbert space (\cite{tervo17-up}, Proposition 2.5).
For convex domains $G$ 
the space $\widetilde { W}^2(G\times S\times I)$ is the completion of $C^1(\ol G\times S\times I)$ with respect to the inner product (\ref{fs14}) and then $C^1(\ol G\times S\times I)$ is dense in
$\widetilde { W}^2(G\times S\times I)$ (\cite{tervo18-up}, Corollary 2.22).

For $v\in \widetilde { W}^2( G\times S\times I)$ and $\psi\in\widetilde { W}^2(G\times S\times I)$ it holds the Green's formula (Stokes's Theorem)
\begin{align}\label{green}
\int_{G\times S\times I}(\omega\cdot \nabla_x \psi)v\ dxd\omega dE
+\int_{G\times S\times I}(\omega\cdot \nabla_x v)\psi\ dxd\omega dE=
\int_{\partial G\times S\times I}(\omega\cdot \nu) v\ \psi\ d\sigma d\omega dE
\end{align}
which is obtained by classical Stokes Theorem  for $v,\ \psi\in C^1(\ol G\times S\times  I)$ and then by the density arguments for general $v\in \widetilde { W}^2(G\times S\times I)$ and $\psi\in\widetilde { W}^2(G\times S\times I)$.

\subsection{Anisotropic Sobolev Spaces}\label{an-spaces}

Let $m=(m_1,m_2,m_3)\in \N_0^3$ be a multi-index (for simplicity we restrict ourselves here to integer multi-indexes but the definitions below can be generalized for fractional indexes $s=(s_1,s_2,s_3)\in [0,\infty[^3$ in the standard way).
Define an {\it  anisotropic (or mixed-norm)  Sobolev spaces} $H^m(G\times S\times I):=H^{m,2}(G\times S\times I)$ by 
\bea
&
H^{m}(G\times S\times I)
:=\Big\{\psi\in L^2(G\times S\times I)\ |\ \partial_x^\alpha\partial_{{\omega}}^\beta\partial_E^l\psi \in L^2(G\times S\times I),\nonumber\\
&
 {\rm for\ all}\ |\alpha|\leq m_1,\ |\beta|\leq m_2,\ l\leq m_3\Big\}.
\eea
Here we denote more shortly $\partial_x^\alpha:={{\partial^\alpha}\over{\partial x^\alpha}}$
and similarly $\partial_\omega^\beta:={{\partial^\beta}\over{\partial \omega^\beta}}={{\partial^{\beta_1}}\over{\partial \omega^{\beta_1}}}{{\partial^{\beta_2}}\over{\partial \omega^{\beta_2}}}$
where $\{\partial_{\omega_j},\ j=1,2\}$ is a local  basis of the tangent space $T(S)$. 
We recall that for sufficiently smooth functions $f:S\to\R$ 
\[
{\p f{\omega_j}}_{\Big|\omega}={\partial\over{\partial w_j}}(f\circ h)_{\Big|z=h^{-1}(\omega)},\ j=1,2
\]
where  $h:W\to S\setminus S_0$ is a parametrization of  $S\setminus S_0$ ($S_0$ has the surface measure zero).
Moreover, recall that $f\in L^2(S)$  if and only if $f\circ h\in L^2(W,\n{\partial_1h\times\partial_2h}dz)$. 
For $f\in L^2(S)$ we define the inner product
\be\label{sn}
\la f_1,f_2\ra_{L^2(S)}=\int_Sf_1 f_2 d\omega=\int_W(f_1\circ h)(f_2\circ h)\n{\partial_1h\times\partial_2h}du.
\ee

The space $H^{m}(G\times S\times I)$  is a Hilbert space when equipped with the  inner product
\be\label{hminner}
\la\psi,v\ra_{H^{m}(G\times S\times I)}
:=\sum_{|\alpha|\leq m_1,|\beta|\leq m_2,
l\leq m_3}
\la\partial_x^\alpha\partial_{\omega}^\beta\partial_E^l\psi,
\partial_x^\alpha\partial_{\omega}^\beta\partial_E^l v\ra_{L^2(G\times S\times I)}.
\ee
The corresponding norm is
\[
\n{\psi}_{H^{m}(G\times S\times I)}=\Big(\sum_{|\alpha|\leq m_1}\sum_{|\beta|\leq m_2}\sum_{l\leq m_3}
\n{\partial_x^\alpha\partial_{\omega}^\beta\partial_E^l\psi }_{L^2(G\times S\times I)}^2\Big)^{{1\over 2}}.
\]

Note that for $m'\geq m$ (this means that $m_j'\geq m_j$ for $j=1,2,3$)
\[
H^{m'}(G\times S\times I)\subset H^{m}(G\times S\times I)
\]
and that (since $G\times S\times I$ is bounded)
\[
W^{\infty,m}(G\times S\times I)\subset
H^{m}(G\times S\times I),
\]
where for $m=(m_1,m_2,m_3)\in\N_0^3$
\bea\label{def:W^(infty,m):1}
W^{\infty,m}(G\times S\times I)
:=\Big\{\psi\in L^\infty(G\times S\times I)
\ \big|\ &
\n{\partial_x^\alpha\partial_{\omega}^\beta\partial_E^l\psi}_{L^\infty(G\times S\times I)}<\infty \\
&
{\rm for }\ 
|\alpha|\leq m_1,\ |\beta|\leq m_2,\  l\leq m_3\Big\}
\nonumber
\eea
equipped with the natural norm.

\vskip1.5cm

\section{Existence of Solutions in Spaces $H_0^{(m,0,0)}(G\times S\times I,\Gamma_-)$}\label{nis}
 
Define for $m\in\N_0\cup\{\infty\}$ the spaces
\begin{align}\label{eq:def:C^k_ol_G_Gamma_-:1}
C_0^{(m,0,0)}(\ol G\times S\times I,\Gamma_-)
:={}& \{ \psi\in C^{(m,0,0)}(\ol G\times S\times I)\ |\ {\rm supp}(\psi)\cap (\Gamma_- \cup\Gamma_0)=\emptyset\}
\end{align}
Note that for $\psi\in C_0^{(m,0,0)}(\ol G\times S\times I,\Gamma_-)$
\be\label{d-pos}
{\rm dist}({\rm supp}(\psi),\Gamma_-\cup \Gamma_0)>0
\ee
since ${\rm supp}(\psi)$ and $\Gamma_-\cup\Gamma_0=\{(y,\omega,E)\ |\ \omega\cdot\nu(y)\leq 0\}=\ol\Gamma_-$ are compact disjoint subsets of $\ol G\times S\times I$ for $\psi\in C^{(m,0,0)}_0(\ol G\times S\times I,\Gamma_-)$.

Furthermore, let $H_0^{(m,0,0)}(G\times S\times I,\Gamma_-)$, $m\in\N_0$, be the completion of
$\cap_{k=1}^\infty C_0^{(k,0,0)}(\ol G\times S\times I,\Gamma_-)$ with respect to the inner product
\[
\la\psi,v\ra_{H^{(m,0,0)}(G\times S\times I)}=\sum_{|\alpha|\leq m}\la \partial_x^\alpha\psi,
 \partial_x^\alpha v\ra_{L^2(G\times S\times I)}. 
\]

Using standard arguments, one can show the following  (cf. \cite{nishitani96})

\begin{proposition}\label{pr:H^(m,0,0,Gamma_-):characterization:1}
If $\psi\in H^{(m,0,0)}(G\times S\times I)$ is such that
\[
{\rm supp}(\psi)\cap (\Gamma_-\cup\Gamma_0)=\emptyset,
\]
then $\psi\in H_0^{(m,0,0)}(G\times S\times I,\Gamma_-)$.
\end{proposition}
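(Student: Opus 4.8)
The plan is to approximate $\psi$ by functions in $\cap_{k}C_0^{(k,0,0)}(\ol G\times S\times I,\Gamma_-)$ in the $H^{(m,0,0)}$-norm, using a standard mollification-and-truncation scheme, exploiting the key fact \eqref{d-pos}: since $\supp(\psi)$ and $\ol\Gamma_-=\Gamma_-\cup\Gamma_0$ are disjoint compact subsets of $\ol G\times S\times I$, we have $\delta:=\mathrm{dist}(\supp(\psi),\Gamma_-\cup\Gamma_0)>0$. Because only $x$-derivatives enter the $H^{(m,0,0)}$-norm, the approximation only needs to be carried out in the $x$-variable, treating $(\omega,E)$ as parameters; this is what makes the argument essentially the classical interior/boundary density statement for ordinary Sobolev spaces $H^m$ on a domain, applied fiberwise and then assembled.

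The steps, in order. First I would extend $\psi$ by zero to a function $\tilde\psi$ on $\R^3\times S\times I$; since $\supp(\psi)$ stays at positive distance $\delta$ from $\Gamma_-\cup\Gamma_0$, and the "outflow" and interior parts of the boundary cause no trouble (one can first translate the support slightly inward along a fixed direction, or simply note that near $\Gamma_+$ the function need not vanish but we may still extend by a standard Sobolev extension and then cut off), the extended function lies in $H^m$ of a neighborhood of $\ol G$ in $\R^3$ for each fixed $(\omega,E)$, with fiberwise norms that are square-integrable in $(\omega,E)$. Second, I would mollify in $x$ with a family $\rho_\varepsilon*_x(\cdot)$, $\varepsilon<\delta/2$; this produces smooth-in-$x$ functions whose $x$-support remains disjoint from $\Gamma_-\cup\Gamma_0$ (because the $\varepsilon$-neighborhood of $\supp(\psi)$ still misses $\ol\Gamma_-$), and which converge to $\psi$ in $H^{(m,0,0)}(G\times S\times I)$ by the usual $L^2$-mollifier estimates combined with dominated convergence in $(\omega,E)$. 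Third, I would multiply by a fixed cutoff $\chi\in C^\infty_0(\R^3)$ equal to $1$ on a neighborhood of $\ol G$ to ensure compact support in $\R^3$, and check that the resulting functions lie in $C_0^{(k,0,0)}(\ol G\times S\times I,\Gamma_-)$ for every $k$ — continuity in $(\omega,E)$ is inherited from $\psi$ (or can be arranged by an additional, harmless mollification in those variables that does not affect the support condition in $x$) and the support condition $\supp\cap(\Gamma_-\cup\Gamma_0)=\emptyset$ is preserved throughout.

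The main obstacle, and the one deserving care, is the very first step: producing an $H^m$-extension across $\partial G$ that is compatible with keeping the support away from $\Gamma_-\cup\Gamma_0$ while still controlling the fiberwise norms measurably and integrably in $(\omega,E)$. The subtlety is that $\psi$ need not vanish near $\Gamma_+$, so a naive "extension by zero" is not in $H^m$ there; one must instead use a genuine Sobolev extension operator for the $C^\infty$-domain $G$ applied in the $x$-variable, and then verify that it acts measurably on the $(\omega,E)$-parametrized family and does not enlarge the support into $\ol\Gamma_-$ (which it does not, since extension operators are local up to the diameter and we may localize near $\Gamma_+$ only). Once this is handled, the mollification and cutoff steps are entirely routine, and the convergence $\psi_\varepsilon\to\psi$ in $H^{(m,0,0)}$ follows from Fubini together with the scalar mollifier theory; hence $\psi\in H_0^{(m,0,0)}(G\times S\times I,\Gamma_-)$.
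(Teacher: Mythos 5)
First, note that the paper offers no proof of this proposition at all --- it is stated with the remark that it follows ``using standard arguments'' (cf.\ \cite{nishitani96}) --- so there is nothing to compare against line by line. Your mollification-and-truncation scheme is exactly the kind of argument intended, and its skeleton is sound: the positive distance $\delta:={\rm dist}({\rm supp}(\psi),\Gamma_-\cup\Gamma_0)>0$ between two disjoint compact subsets of $\ol G\times S\times I$, an approximation that only has to produce $x$-derivatives, and preservation of the support condition under perturbations of size $<\delta/2$.

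Two points in your write-up do not go through as stated. (a) Continuity in $(\omega,E)$ is \emph{not} inherited from $\psi$, which is merely an $L^2$ function; membership in $\cap_{k}C_0^{(k,0,0)}(\ol G\times S\times I,\Gamma_-)$ requires joint continuity of all $x$-derivatives in $(x,\omega,E)$, so the additional mollification in $\omega$ and $E$ that you relegate to a parenthesis is in fact mandatory (it is harmless, since no $(\omega,E)$-derivatives enter the norm and it moves supports arbitrarily little). (b) ``Localize near $\Gamma_+$ only'' is not a purely spatial operation: $\Gamma_+$ is a subset of $\partial G\times S$, and the part of $\partial G$ on which $\psi(\cdot,\omega,E)$ may fail to vanish is the $\omega$-dependent set $\{y\in\partial G\ |\ \omega\cdot\nu(y)>0\}$; likewise no single translation direction moves a strictly convex body into itself, so your first fallback fails globally. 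Hence a fixed spatial cutoff ``near the outflow boundary'' does not exist, and if you insist on a Sobolev extension operator you need one with a uniform locality property ($u=0$ on $B(y,r)\cap G$ implies $Eu=0$ on $B(y,\kappa r)$), applied fiberwise, so that the vanishing of $\psi(\cdot,\omega,E)$ on the $\delta$-neighbourhood of $\{\omega\cdot\nu\leq 0\}$ propagates to the extension. The cleanest repair avoids extension operators altogether: since $G$ is bounded and convex it is star-shaped about any $x_0\in G$, and the dilates $\psi_\lambda(x,\omega,E):=\psi(x_0+\lambda(x-x_0),\omega,E)$ with $\lambda\uparrow 1$ are defined on a full neighbourhood of $\ol G$ in the $x$-variable, converge to $\psi$ in $H^{(m,0,0)}(G\times S\times I)$, and move supports by at most $(\lambda^{-1}-1)\,\diam(G)<\delta/2$; a variant adapted to the transport structure is the shift $\psi(x-t\omega,\omega,E)$, whose jump set is pushed outside $\ol G$ by Lemma \ref{le:basic_geometric_properties:1} (iii) together with the positive lower bound of $\omega\cdot\nu$ on the compact set ${\rm supp}(\psi)\cap\Gamma$. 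After either of these replacements, your mollification and cutoff steps complete the proof.
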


From the inflow Trace Theorem (\cite[Theorem 2.15]{tervo18-up}) it follows that $\psi_{|\Gamma_-}=0$ for any $\psi\in H_0^{(m,0,0)}(G\times S\times I,\Gamma_-),\ m\geq 1$.
Define for $\psi\in \cap_{k=1}^\infty C_0^{(k,0,0)}(\ol G\times S\times I,\Gamma_-)$ a boundary norm 
\[
\n{\gamma(\psi)}_{H_0^{(m,0,0)}(\Gamma,\Gamma_-)}^2=\sum_{|\alpha|\leq m}\n{\partial_x^\alpha\psi}_{T^2(\Gamma)}^2
=\sum_{|\alpha|\leq m}\n{\partial_x^\alpha\psi}_{T^2(\Gamma_+)}^2
\]
and let $H_0^{(m,0,0)}(\Gamma,\Gamma_-)$ be the completion of $\{\gamma(\psi)|\ \psi\in \cap_{k=1}^\infty C_0^{(k,0,0)}(\ol G\times S\times I,\Gamma_-)\}$ with respect to $\n{\cdot }_{H_0^{(m,0,0)}(\Gamma,\Gamma_-)}$-norm.

In the sequel we shall
consider the regularity of solutions of the transport equation 
\[
T\psi:=a{\p \psi{E}}
+\omega\cdot\nabla_x\psi+\Sigma\psi-K_{r}\psi=f
\] 
using scales $H_0^{(m,0,0)}(G\times S\times I,\Gamma_-)$.

\subsection{Higher-order Regularity Results for Convection-Attenuation Equation}\label{conv-eq}

Firstly, consider the case where $a=K_r=0$.
Denote
\[
P\psi:=\omega\cdot\nabla_x\psi+\Sigma\psi.
\]

\subsubsection{Auxiliary Accretivity Results}\label{nis-sub-a}

The space $C^{(\infty,0,0)}(\ol G\times S\times I)$ is not dense in $W^{\infty,(m,0,0)}(G\times S\times I)$.
Hence for our purposes, we need to consider a subspace of $W^{\infty,(m,0,0)}(G\times S\times I)$ in which
the former space is dense.

Let $\s W^{\infty,(m,0,0)}(G\times S\times I)$ be the completion of $C^{(\infty,0,0)}(\ol G\times S\times I)$ with respect to $\n{\cdot}_{W^{\infty,(m,0,0)}(G\times S\times I)}$.
Since for $\phi\in C^{(\infty,0,0)}(\ol G\times S\times I)$ and $\psi\in \cap_{k=1}^\infty C_0^{(k,0,0)}(\ol G\times S\times I,\Gamma_-)$
the product $\phi\psi\in \cap_{k=1}^\infty C_0^{(k,0,0)}(\ol G\times S\times I,\Gamma_-)$, we find that for $\Sigma\in  W^{\infty,(m,0,0)}(\ol G\times S\times I)$ and $\psi\in \cap_{k=1}^\infty C_0^{(k,0,0)}(\ol G\times S\times I,\Gamma_-)$
the product $\Sigma\psi\in H_0^{(m,0,0)}(\ol G\times S\times I,\Gamma_-)$ and
\be\label{li}
\n{\Sigma\psi}_{H_0^{(m,0,0)}(G\times S\times I,\Gamma_-)}\leq c(m)\n{\Sigma}_{W^{\infty,(m,0,0)}(\ol G\times S\times I)}\n{\psi}_{H_0^{(m,0,0)}(G\times S\times I,\Gamma_-)}
\ee
for some constant $0<c(m)<\infty$ depending on the order $m$.
Hence $\Sigma:H_0^{(m,0,0)}(G\times S\times I,\Gamma_-)\to H_0^{(m,0,0)}(G\times S\times I,\Gamma_-)$ is bounded
as a linear operator $\psi\mapsto \Sigma\psi$.

\begin{remark}
We imposed the requirement $\Sigma\in \s W^{\infty,(m,0,0)}(G\times S\times I)$ only for technical reasons. The below regularity results are valid assuming only that $\Sigma\in  W^{\infty,(m,0,0)}(G\times S\times I)$.
\end{remark}

Suppose that $\Sigma\in \s  W^{\infty,(m,0,0)}(G\times S\times I)$.
Since ${\rm supp}(P\psi)\subset {\rm supp}(\psi)$ we find that $P\psi\in H_0^{(m,0,0)}(\ol G\times S\times I,\Gamma_-)$
when $\psi\in \cap_{k=1}^\infty C_0^{(k,0,0)}(\ol G\times S\times I,\Gamma_-)$. 
Define an unbounded operator $P_m: H_0^{(m,0,0)}(G\times S\times I,\Gamma_-)\to  H_0^{(m,0,0)}(G\times S\times I,\Gamma_-)$ by
\[
\begin{cases}
& D(P_m):=\cap_{k=1}^\infty C_0^{(k,0,0)}(G\times S\times I,\Gamma_-),\\
&
P_m\psi=P\psi.
\end{cases}
\] 
Let $\widetilde P_m:H_0^{(m,0,0)}(G\times S\times I,\Gamma_-)\to H_0^{(m,0,0)}(G\times S\times I,\Gamma_-)$
be the smallest closed extension of $P_m$ (i.e. the closure of $P_m$), that is,
\[
\begin{cases}
D(\widetilde P_m) &= \{\psi\in H_0^{(m,0,0)}(G\times S\times I,\Gamma_-)\ |\ \textrm{there exists a sequence $\{\psi_n\}\subset D(P_m)$}\ \\
&
\phantom{=\{}
\textrm{and $f\in H_0^{(m,0,0)}(G\times S\times I,\Gamma_-)$ such that}\ 
\n{\psi_n-\psi}_{H_0^{(m,0,0)}(G\times S\times I,\Gamma_-)}\to 0\ \\
&
\phantom{=\{}
\textrm{and $\n{P_m\psi_n-f}_{H_0^{(m,0,0)}(G\times S\times I,\Gamma_-)}\to 0$ as $n\to\infty$} \\
\widetilde P_m\psi &= f.
\end{cases}
\]

The operator $\widetilde P_m$ is densely defined and closed as an unbounded operator $H_0^{(m,0,0)}(G\times S\times I,\Gamma_-)\to H_0^{(m,0,0)}(G\times S\times I,\Gamma_-)$.
In addition, we have

\begin{lemma}\label{nis-le1}
Suppose that $\Sigma\in \s W^{\infty,(m,0,0)}(G\times S\times I)$
and let $c(m)$ be a positive constant so that \eqref{li} holds
and write $C':=c(m)\n{\Sigma}_{W^{\infty,(m,0,0)}(G\times S\times I)}$.
Then for $C\geq C'$
\be\label{nis-1}
\la (\widetilde P_m+C\s I)\psi,\psi\ra_{H_0^{(m,0,0)}(G\times S\times I,\Gamma_-)}\geq (C-C')\n{\psi}_{H_0^{(m,0,0)}(G\times S\times I,\Gamma_-)}^2,
\quad \psi\in D(\widetilde P_m).
\ee
In particular,
$\widetilde P_m+C\s{I}:H_0^{(m,0,0)}(G\times S\times I,\Gamma_-)\to H_0^{(m,0,0)}(G\times S\times I,\Gamma_-)$  is an accretive operator
for $C\geq C'$.

\end{lemma}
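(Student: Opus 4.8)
The plan is to verify \eqref{nis-1} directly for $\psi\in D(P_m)=\cap_{k=1}^\infty C_0^{(k,0,0)}(\ol G\times S\times I,\Gamma_-)$ and then to pass to the closure $\widetilde P_m$ by a density/continuity argument. For smooth compactly supported $\psi$ (away from $\Gamma_-\cup\Gamma_0$), we expand
\[
\la (P_m+C\s I)\psi,\psi\ra_{H_0^{(m,0,0)}}
=\sum_{|\alpha|\le m}\la \partial_x^\alpha(\omega\cdot\nabla_x\psi),\partial_x^\alpha\psi\ra_{L^2(G\times S\times I)}
+\sum_{|\alpha|\le m}\la \partial_x^\alpha(\Sigma\psi),\partial_x^\alpha\psi\ra_{L^2}
+C\n{\psi}_{H_0^{(m,0,0)}}^2.
\]
Since $\omega\cdot\nabla_x$ has constant (in $x$) coefficients, it commutes with $\partial_x^\alpha$, so the first sum equals $\sum_{|\alpha|\le m}\la \omega\cdot\nabla_x(\partial_x^\alpha\psi),\partial_x^\alpha\psi\ra_{L^2}$. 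Applying Green's formula \eqref{green} to each term with $v=\psi_\alpha:=\partial_x^\alpha\psi$ gives $2\la \omega\cdot\nabla_x\psi_\alpha,\psi_\alpha\ra_{L^2}=\int_{\Gamma}(\omega\cdot\nu)\,\psi_\alpha^2\,d\sigma d\omega dE$. Because $\supp(\psi)$ is disjoint from $\Gamma_-\cup\Gamma_0=\{\omega\cdot\nu\le 0\}$, each $\psi_\alpha$ vanishes on that part of $\Gamma$, hence $\psi_\alpha^2=0$ wherever $\omega\cdot\nu<0$ and the boundary integral reduces to $\int_{\Gamma_+}(\omega\cdot\nu)\psi_\alpha^2\ge 0$. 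Therefore the first sum is nonnegative.

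For the zeroth-order part, by Cauchy--Schwarz and the product bound \eqref{li},
\[
\Big|\sum_{|\alpha|\le m}\la \partial_x^\alpha(\Sigma\psi),\partial_x^\alpha\psi\ra_{L^2}\Big|
\le \n{\Sigma\psi}_{H_0^{(m,0,0)}}\,\n{\psi}_{H_0^{(m,0,0)}}
\le c(m)\n{\Sigma}_{W^{\infty,(m,0,0)}}\n{\psi}_{H_0^{(m,0,0)}}^2
= C'\n{\psi}_{H_0^{(m,0,0)}}^2.
\]
Combining the two estimates yields, for $\psi\in D(P_m)$,
\[
\la (P_m+C\s I)\psi,\psi\ra_{H_0^{(m,0,0)}}\ge (C-C')\n{\psi}_{H_0^{(m,0,0)}}^2,
\]
which is \eqref{nis-1} on the dense domain. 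Finally, if $\psi\in D(\widetilde P_m)$, pick $\{\psi_n\}\subset D(P_m)$ with $\psi_n\to\psi$ and $P_m\psi_n\to\widetilde P_m\psi$ in $H_0^{(m,0,0)}$; passing to the limit in the inequality for $\psi_n$ (both the inner product $\la(P_m+C\s I)\psi_n,\psi_n\ra$ and the norm $\n{\psi_n}$ converge to the corresponding expressions for $\psi$) gives \eqref{nis-1} for all $\psi\in D(\widetilde P_m)$. Accretivity for $C\ge C'$ is then the statement that $\mathrm{Re}\,\la(\widetilde P_m+C\s I)\psi,\psi\ra\ge 0$, which is immediate (everything is real-valued here).

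The main obstacle is the boundary-term bookkeeping: one must be sure that the Green's formula \eqref{green} is legitimately applicable to the derivatives $\psi_\alpha=\partial_x^\alpha\psi$ (they lie in $C^1(\ol G\times S\times I)\subset\widetilde W^2(G\times S\times I)$, so this is fine) and that $\supp(\psi_\alpha)\subset\supp(\psi)$ is genuinely disjoint from $\Gamma_-\cup\Gamma_0$, so that the only surviving contribution is the manifestly nonnegative integral over $\Gamma_+$. The inclusion $\supp(\partial_x^\alpha\psi)\subset\supp(\psi)$ and \eqref{d-pos} take care of this. The remaining point — that $\Sigma\psi\in H_0^{(m,0,0)}(G\times S\times I,\Gamma_-)$ with the stated operator bound — is exactly what was established in \eqref{li} using $\Sigma\in\s W^{\infty,(m,0,0)}(G\times S\times I)$, so no extra work is needed there.
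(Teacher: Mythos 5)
Your proof is correct and follows essentially the same route as the paper: integrate by parts on each $\partial_x^\alpha\psi$ via Green's formula \eqref{green}, use the vanishing of the derivatives on $\Gamma_-\cup\Gamma_0$ to reduce the boundary term to a nonnegative integral over $\Gamma_+$, bound the $\Sigma$-term by \eqref{li}, and pass to the closure by density. No discrepancies worth noting.
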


\begin{proof}
 
Using  Green's formula (\ref{green}) we obtain for $\psi,\ v\in \cap_{k=1}^\infty C_0^{(k,0,0)}(\ol G\times S\times I,\Gamma_-)$
\bea\label{trunc-10}
&
\la\omega\cdot\nabla_x\psi,v\ra_{H_0^{(m,0,0)}(G\times S\times I,\Gamma_-)}
=\sum_{|\alpha|\leq m}\la \partial_x^\alpha(\omega\cdot\nabla_x\psi),\partial_x^\alpha v\ra_{L^2(G\times S\times I)}\nonumber\\
&
=\sum_{|\alpha|\leq m}\la \omega\cdot\nabla_x(\partial_x^\alpha\psi),\partial_x^\alpha v\ra_{L^2(G\times S\times I)}\nonumber\\
&
=
-\sum_{|\alpha|\leq m}\la \partial_x^\alpha\psi,\omega\cdot\nabla_x (\partial_x^\alpha v)\ra_{L^2(G\times S\times I)}
+\sum_{|\alpha|\leq m}\int_{\partial G\times S\times I}(\partial_x^\alpha\psi) (\partial_x^\alpha v) (\omega\cdot\nu) d\sigma d\omega dE  
\nonumber\\
&
=
-\sum_{|\alpha|\leq m}\la \partial_x^\alpha\psi, \partial_x^\alpha (\omega\cdot\nabla_x v)\ra_{L^2(G\times S\times I)}
+\sum_{|\alpha|\leq m}\int_{\Gamma_+}(\partial_x^\alpha\psi) (\partial_x^\alpha v)(\omega\cdot\nu) d\sigma d\omega dE \\
&
=
-\la\psi,\omega\cdot\nabla_x v\ra_{H_0^{(m,0,0)}(G\times S\times I,\Gamma_-)}
+\sum_{|\alpha|\leq m}\int_{\Gamma_+}(\partial_x^\alpha\psi) (\partial_x^\alpha v)(\omega\cdot\nu) d\sigma d\omega dE
\eea
since $\partial_x^\alpha\psi=\partial_x^\alpha v=0$ on $\Gamma_-$ and $\omega\cdot\nu=0$ on $\Gamma_0$.
Hence
\[
\la\omega\cdot\nabla_x\psi,\psi\ra_{H_0^{(m,0,0)}(G\times S\times I,\Gamma_-)}
={1\over 2}\sum_{|\alpha|\leq m}\int_{\Gamma_+}(\partial_x^\alpha\psi) (\partial_x^\alpha v)(\omega\cdot\nu) d\sigma d\omega dE 
={1\over 2}\n{\psi}_{H^{(m,0,0)}_0(\Gamma,\Gamma_-)}^2
\]
and so by \eqref{li}, we have
\bea\label{nis-2}
&
\la (P_m+C\s{I})\psi,\psi\ra_{H_0^{(m,0,0)}(G\times S\times I,\Gamma_-)}\nonumber\\
&
=
\la\omega\cdot\nabla_x\psi,\psi\ra_{ H_0^{(m,0,0)}(G\times S\times I,\Gamma_-)}
+\la (C+\Sigma)\psi,\psi\ra_{H_0^{(m,0,0)}(G\times S\times I,\Gamma_-)}
\nonumber \\
&
\geq 
{1\over 2}\n{\psi}_{H_0^{(m,0,0)}(\Gamma,\Gamma_-)}^2
+(C-c(m)\n{\Sigma}_{W^{\infty,(m,0,0)}(G\times S\times I)})\n{\psi}_{H_0^{(m,0,0)}(G\times S\times I,\Gamma_-)}^2
\nonumber \\
&
=
{1\over 2}\n{\psi}_{H_0^{(m,0,0)}(\Gamma,\Gamma_-)}^2
+(C-C')\n{\psi}_{H_0^{(m,0,0)}(G\times S\times I,\Gamma_-)}^2
\eea
which gives the estimate (\ref{nis-1}) due to the definition of $\widetilde P_m$.
In virtue of (\ref{nis-1})
\[
\la (\widetilde P_m+C\s I)\psi,\psi\ra_{H_0^{(m,0,0)}(G\times S\times I,\Gamma_-)}\geq 0,\quad \psi\in D(\widetilde P_m)
\]
when $C\geq C'$
and so $\widetilde P_m+C\s I$ is accretive in  $H_0^{(m,0,0)}(G\times S\times I,\Gamma_-)$.
\end{proof}

Let $C'$ be as in Lemma \ref{nis-le1} and let $C>C'$, $c:=C-C'$. Then by Lemma \ref{nis-le1}
\be\label{nis-1-c}
\la (\widetilde P_m+C\s I)\psi,\psi\ra_{H_0^{(m,0,0)}(G\times S\times I,\Gamma_-)}\geq 
c\n{\psi}_{H_0^{(m,0,0)}(G\times S\times I,\Gamma_-)}^2,\ 
\psi\in D(\widetilde P_m).
\ee
which implies that the range $R(\widetilde P_m+C\s I)$ is closed in $H_0^{(m,0,0)}(G\times S\times I,\Gamma_-)$.

Recall that for $f\in L^2(G\times S\times I)$ the unique solution $\psi\in W^2(G\times S\times I)$ of the problem
\[
\omega\cdot\nabla_x\psi+\Sigma\psi=f,\ \psi_{|\Gamma_-}=0
\]
is
\be\label{nis-3}
\psi=\int_0^{t(x,\omega)}e^{-\int_0^t\Sigma(x-s\omega,\omega,E)ds}f(x-t\omega,\omega,E)dt.
\ee
For the escape time mapping $t$ we define the following extension $\tilde t:\ol G\times S\to\R$
\be\label{olt}
\tilde t(x,\omega):=
\begin{cases}
t(x,\omega),\ &(x,\omega)\in G\times S\\
0,\ &(x,\omega)\in \Gamma'_-\cup\Gamma'_0\\
\tau_+(x,\omega),\ &(x,\omega)\in\Gamma'_+.
\end{cases}
\ee
Defining $\tilde{t}$ in this way, i.e. extending the definition of $t$ from $G\times S$
onto $\ol{G}\times S$ the way we do,
will be justified by Proposition \ref{pr:continuity_of_tilde_t:1} given below.

We formulate the following geometric result.

\begin{lemma}\label{le:basic_geometric_properties:1}
The following geometric properties are valid:
\begin{itemize}

\item[(i)] If $(x,\omega)\in\Gamma'_+$,
there exists $s_0>0$ such that $x-s\omega\in G$ for all $0<s\leq s_0$
and $\tau_+(x,\omega)>0$.

\item[(ii)] If $(x,\omega)\in\Gamma'_-$,
there exists $s_0>0$ such that $x+s\omega\in G$ for all $0<s\leq s_0$
and $\tau_-(x,\omega)>0$.

\item[(iii)] For every $x\in\partial G$
it holds
\[
\nu(x)\cdot (z-x)<0\quad \forall z\in G.
\]

\item[(iv)]
Let $(x,\omega)\in (G\times S)\cup \Gamma'_+$.
Then $\tilde{t}(x,\omega)>0$ and for $y:=x-\tilde{t}(x,\omega)\omega$ it holds $(y,\omega)\in\Gamma'_-$.
\end{itemize}
\end{lemma}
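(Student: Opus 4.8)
The statement collects four elementary convexity facts, and the natural strategy is to prove (iii) first as the geometric backbone, then derive (i), (ii), (iv) from it. For (iii): since $\ol G$ is strictly convex and $x\in\partial G$, the supporting hyperplane at $x$ with outward normal $\nu(x)$ satisfies $\nu(x)\cdot(z-x)\le 0$ for all $z\in\ol G$, with equality only on the intersection of that hyperplane with $\ol G$. Strict convexity forces this intersection to be the single point $\{x\}$, so for $z\in G$ (in particular $z\ne x$) we get the strict inequality $\nu(x)\cdot(z-x)<0$. I would phrase this using the definition of strict convexity via supporting hyperplanes touching the boundary in exactly one point, citing the regularity assumptions on $\partial G$ made in Section~\ref{prem}.

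For (i): let $(x,\omega)\in\Gamma'_+$, so $\omega\cdot\nu(x)>0$. I want to show $x-s\omega\in G$ for small $s>0$. Write $f(s):=$ signed distance of $x-s\omega$ to $\partial G$, or more concretely work with the defining function: since $\partial G$ is $C^\infty$, near $x$ there is a smooth $\rho$ with $G=\{\rho<0\}$ locally, $\rho(x)=0$, $\nabla\rho(x)$ parallel to $\nu(x)$. Then $\frac{d}{ds}\rho(x-s\omega)|_{s=0}=-\nabla\rho(x)\cdot\omega = -|\nabla\rho(x)|\,(\omega\cdot\nu(x))<0$, so $\rho(x-s\omega)<0$ for small $s>0$, i.e. $x-s\omega\in G$; this gives the desired $s_0>0$ and shows $\tau_+(x,\omega)\ge s_0>0$. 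Part (ii) is identical with $\omega$ replaced by $-\omega$ (now $(-\omega)\cdot\nu(x)<0$ so $+s\omega$ points inward), giving $\tau_-(x,\omega)>0$.

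For (iv): if $(x,\omega)\in G\times S$, then $\tilde t(x,\omega)=t(x,\omega)>0$ by the remark after \eqref{def:t:1} (convexity gives $0<t\le d$); if $(x,\omega)\in\Gamma'_+$, then $\tilde t(x,\omega)=\tau_+(x,\omega)>0$ by part (i). In either case set $y:=x-\tilde t(x,\omega)\omega$. By definition of $t$ (resp. $\tau_+$) as an infimum, $y\in\partial G$ and $x-s\omega\in G$ for all $0<s<\tilde t(x,\omega)$; since $G$ is convex and open, the open segment from $y$ to any interior point lies in $G$, and letting $s\uparrow \tilde t$ along $x-s\omega=y+(\tilde t-s)\omega$ shows the ray enters $G$ immediately on the $+\omega$ side from $y$. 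To conclude $(y,\omega)\in\Gamma'_-$ I must check $\omega\cdot\nu(y)<0$: apply (iii) with the boundary point $y$ and $z:=x-s\omega\in G$ for small $s\in(0,\tilde t)$, obtaining $\nu(y)\cdot(z-y)=\nu(y)\cdot(\tilde t-s)\omega<0$, hence $\omega\cdot\nu(y)<0$. (One must separately exclude $\omega\cdot\nu(y)=0$, i.e.\ rule out $y\in\Gamma'_0$, but the strict inequality in (iii) does exactly this.)

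The only genuinely delicate point is the use of \emph{strict} convexity in (iii) and its consequence in (iv): without strictness one could only get $\le 0$, which would leave open the tangential case $\omega\cdot\nu(y)=0$ and break the claim $(y,\omega)\in\Gamma'_-$. I expect this to be the main obstacle in the write-up — everything else is a routine application of the $C^\infty$ defining function and the definitions of $t$, $\tau_\pm$. A small technical care is also needed so that the chosen $z\in G$ in (iv) is genuinely interior (not on $\Gamma'_0$-type degenerate configurations), which is automatic since $G$ is open and $x-s\omega\in G$ for $s\in(0,\tilde t)$ by construction.
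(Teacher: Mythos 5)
Your proposal is correct and follows essentially the same route as the paper: a local $C^1$ defining function for (i) and (ii), a convexity argument for (iii), and the combination of (iii) with the definition of $t$, $\tau_+$ for (iv). The one substantive difference is in how the \emph{strict} inequality in (iii) is obtained. You invoke strict convexity of $\ol G$ (the supporting hyperplane at $x$ meets $\ol G$ only at $x$), whereas the paper first proves $\nu(x)\cdot(z-x)\le 0$ for all $z\in\ol G$ from plain convexity and then upgrades to $<0$ using only the \emph{openness} of $G$: if $\nu(x)\cdot(z_0-x)=0$ for some $z_0\in G$, then a small ball $B(z_0,\delta')\subset G$ gives $\nu(x)\cdot h\le 0$ for all $\n{h}<\delta'$, which fails for $h=\tfrac{\delta'}{2}\nu(x)$. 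So your remark that strict convexity is ``the only genuinely delicate point'' and that without it one could only get $\le 0$ is slightly off: (iii) and hence the conclusion $\omega\cdot\nu(y)<0$ in (iv) hold for any bounded open convex $G$ with $C^1$ boundary; in this paper strict convexity is really needed only later (e.g.\ in Lemma \ref{le:tilde_t_bounded_from_below:1} and the continuity of $\tilde t$ in Proposition \ref{pr:continuity_of_tilde_t:1}). Both arguments are valid; the paper's buys a marginally weaker hypothesis for this particular lemma, while yours is a one-line appeal to a standard fact about strictly convex bodies. Everything else, including the small verification that $y=x-\tilde t(x,\omega)\omega\in\partial G$ and the reduction of the $\Gamma'_+$ case of (iv) to the interior case via (i), matches the paper's proof.
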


\begin{proof}
See Appendix, Section \ref{app:le:basic_geometric_properties:1:proof}.
\end{proof}

\begin{remark}\label{nonc}
The mapping $\tilde t$ defined by \eqref{olt} is not necessarily continuous on $\ol{G}\times S$ for convex domains $G$.
For example, let $G=\R_+^3=\{x\in\R^3\ |\ x_3>0\}$. Then for $(x,\omega')\in G\times S$, $\omega_3'> 0$,
\[
t(x,\omega')=\big|-{{x_3}\over{\omega_3'}}\big|.
\]
We find that for $(y,\omega)\in \Gamma_0'$ (for which $y_3=\omega_3=0$)
\[
\lim_{\overset{\scriptstyle (x,\omega')\to (y,\omega)}{(x,\omega')\in G\times S,\ x_3=\omega_3'}}t(x,\omega')=1,\
\lim_{\overset{\scriptstyle (x,\omega')\to (y,\omega)}{(x,\omega')\in G\times S,\ x_3=(\omega_3')^2}}t(x,\omega')=0
\]
and so the limit $\lim_{(x,\omega')\to (y,\omega),\ (x,\omega')\in G\times S} t(x,\omega')$ doest not exists.
Similar situation is valid for bounded smooth domain whose boundary contains a plane area.
\end{remark}

However, under our assumptions which include the strict convexity of $G$, we have
(see also Lemma 2.3.1 in \cite{anikonov:2002}).

\begin{proposition}\label{pr:continuity_of_tilde_t:1}
The function $\tilde{t}:\ol{G}\times S\to\R_+$ is continuous
and its restriction $t=\tilde{t}|_{G\times S}$ to $G\times S$ is $C^1$.
\end{proposition}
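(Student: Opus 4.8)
The plan is to prove continuity of $\tilde t$ on $\ol G\times S$ and the $C^1$-property of $t$ on $G\times S$ separately, since the latter follows from a clean implicit-function-theorem argument at interior points, while the former requires a careful geometric analysis near the characteristic set $\Gamma'_0$.

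\textbf{Step 1: $C^1$-regularity of $t$ on $G\times S$.}
First I would fix $(x_0,\omega_0)\in G\times S$ and set $y_0 := x_0 - t(x_0,\omega_0)\omega_0 \in \partial G$. By Lemma \ref{le:basic_geometric_properties:1}(iv) (applied with the roles adapted) $y_0$ is the unique exit point of the backward ray, and strict convexity guarantees that this ray hits $\partial G$ transversally, i.e. $\nu(y_0)\cdot\omega_0 > 0$ (if it were $\le 0$, part (iii) of the Lemma would be contradicted or the ray would be tangent, which strict convexity excludes for a chord of positive length). Writing $\partial G$ locally as a level set $\{F = 0\}$ with $\nabla F(y_0) = |\nabla F(y_0)|\,\nu(y_0) \ne 0$, the equation $F(x - t\omega) = 0$ has $\partial_t\big[F(x-t\omega)\big]\big|_{(x_0,\omega_0,t_0)} = -\nabla F(y_0)\cdot\omega_0 \ne 0$, so the implicit function theorem yields a $C^1$ (indeed $C^\infty$, since $\partial G$ is $C^\infty$) solution $t = t(x,\omega)$ near $(x_0,\omega_0)$. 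One must check this local solution coincides with the escape time, which follows because $x - s\omega \in G$ for $s$ slightly less than $t(x,\omega)$ (the ray stays inside up to the first exit) — this is where strict convexity again matters, ensuring the ray meets $\partial G$ exactly once in the forward-$s$ direction.

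\textbf{Step 2: continuity of $\tilde t$ at boundary points away from $\Gamma'_0$.}
For $(x_0,\omega_0)\in\Gamma'_+ \cup \Gamma'_-$ with $\omega_0\cdot\nu(x_0)\ne 0$ the same implicit-function argument applies: the backward ray (for $\Gamma'_+$) or forward ray (for $\Gamma'_-$) leaves transversally, and $\tilde t$ is given by the $C^1$ local solution, with the boundary value matching $\tau_\pm$ by definition \eqref{def:tau_pm:1} and \eqref{olt}. So continuity (even $C^1$-smoothness) holds there.

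\textbf{Step 3: continuity of $\tilde t$ at $\Gamma'_0$ — the main obstacle.}
The hard part is showing $\tilde t(x,\omega)\to 0$ as $(x,\omega)\to(y_0,\omega_0)\in\Gamma'_0$ within $\ol G\times S$; Remark \ref{nonc} shows this fails without strict convexity. Here I would argue by contradiction: suppose $(x_n,\omega_n)\to(y_0,\omega_0)$ with $\tilde t(x_n,\omega_n)\ge\delta>0$. Then the chord from $x_n$ back to its exit point $y_n := x_n - \tilde t(x_n,\omega_n)\omega_n\in\partial G$ has length $\ge\delta$; passing to a subsequence $y_n\to y_* := y_0 - \ell\,\omega_0$ for some $\ell\ge\delta$, and $y_*\in\partial G$ by closedness. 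Moreover every interior point of the segment $[y_n,x_n)$ lies in $G$, so in the limit the whole segment $[y_*,y_0]$ lies in $\ol G$. But $y_0\in\partial G$ with $\omega_0\cdot\nu(y_0)=0$ means $\omega_0$ is tangent to $\partial G$ at $y_0$; strict convexity then forces that the supporting hyperplane at $y_0$ meets $\ol G$ only at $y_0$, so the segment $[y_*,y_0]$, which is contained in $\ol G$ and has direction $\omega_0$ lying in that supporting hyperplane, must be the single point $y_0$ — contradicting $\ell\ge\delta>0$. (The precise statement of strict convexity I would invoke: for a strictly convex $\ol G$, each supporting hyperplane intersects $\ol G$ in exactly one point; equivalently, $\ol G$ contains no nontrivial line segment in its boundary, and more strongly no chord can be tangent to $\partial G$ at an endpoint.) This rules out the first type of discontinuity. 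The remaining continuity statements — that $\tilde t$ is continuous \emph{approaching} $\Gamma'_0$ from the interior and from $\Gamma'_\pm$, and that the three cases in \eqref{olt} glue continuously — then follow by combining Steps 1–2 with the sequential characterization: any sequence in $\ol G\times S$ converging to a point of $\Gamma'_0$ has $\tilde t\to 0$, and $\tilde t(y_0,\omega_0)=0$ by definition. I would organize the write-up so that Lemma \ref{le:basic_geometric_properties:1}(iii) (the strict inequality $\nu(x)\cdot(z-x)<0$ for $z\in G$, $x\in\partial G$) is the workhorse quantifying strict convexity in all three steps, since it is already available in the excerpt.
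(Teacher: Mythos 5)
Your proposal is correct in substance and reaches the result by a route that overlaps with, but is more self-contained than, the paper's. The paper obtains the interior $C^1$-regularity and the interior-to-boundary limit at $\Gamma'_+$ by citing earlier results (Proposition 4.7 of \cite{tervo17-up} and Lemma 2.5 of \cite{tervo18-up}), and then disposes of the boundary cases with a sequential dichotomy (Lemma \ref{le:tilde_t_bounded_from_below:1}): a sequence in $\ol{G}\times S$ converging to a boundary point has $\tilde{t}\to 0$ precisely when the limit lies in $\Gamma'_-\cup\Gamma'_0$, the key step being that a uniform lower bound $\tilde{t}(x_k,\omega_k)\geq c>0$ forces $\omega\cdot\nu(y)>0$ at the limit, via strict convexity (boundary points are extreme points) and Lemma \ref{le:basic_geometric_properties:1}(iii). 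Your Steps 1--2 replace the citations by a direct implicit-function-theorem argument at every transversal exit point, and your Step 3 is essentially the contrapositive of the paper's dichotomy, phrased with supporting hyperplanes instead of extreme points; the two formulations of strict convexity are equivalent for a convex body, and both ultimately lean on Lemma \ref{le:basic_geometric_properties:1}(iii). What your route buys is independence from the external propositions and, as a by-product, smoothness of $\tilde{t}$ up to $\Gamma'_+\cup\Gamma'_-$, which the paper does not claim; what the paper's route buys is that a single lemma covers $\Gamma'_-$ and $\Gamma'_0$ simultaneously and feeds directly into the $\Gamma'_+$ case, avoiding the verification that the local implicit-function solution coincides with the escape time.

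Two small points to repair in a full write-up. First, the sign in Step 1 is reversed: for the backward exit point $y_0=x_0-t(x_0,\omega_0)\omega_0$ one has $(y_0,\omega_0)\in\Gamma'_-$ by Lemma \ref{le:basic_geometric_properties:1}(iv), i.e. $\omega_0\cdot\nu(y_0)<0$; indeed part (iii) applied to $z=x_0=y_0+t_0\omega_0\in G$ gives $t_0\,\nu(y_0)\cdot\omega_0<0$ with $t_0>0$. Only the nonvanishing of $\omega_0\cdot\nu(y_0)$ is used afterwards, so nothing downstream breaks, but the stated inequality and its justification are backwards. Second, identifying the local implicit-function solution with $\tilde{t}$ at a point of $\Gamma'_+$ is not purely local: one needs the whole open chord $\{x_0-s\omega_0\ :\ 0<s<\tau_+(x_0,\omega_0)\}$ to lie in $G$ (strict convexity) together with a compactness argument ensuring nearby chords stay in $G$ away from their endpoints, so that no earlier exit occurs. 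You gesture at this and it does go through, but it deserves an explicit argument since it is the one step where the escape time is a global, not local, quantity.
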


\begin{proof}
See Appendix, Section \ref{app:pr:continuity_of_tilde_t:1:proof}.
\end{proof}

\begin{example}\label{ex:pr:continuity_of_tilde_t:1:1}
Let $G=B(0,1)=\{x\in\R^3\ |\ \n{x}<1\}$ be the open ball in $\R^3$ centered at $0$ with unit radius.
Then $\ol{G}=\ol{B}(0,1)=\{x\in\R^3\ |\ \n{x}\leq 1\}$, $\partial G=\{x\in\R^3\ |\ \n{x}=1\}$
and $\ol{G}$ is strictly convex.

It turns out that in this case the extended escape time map $\tilde{t}:\ol{G}\times S\to\R_+$ is given by
\[
\tilde{t}(x,\omega)=x\cdot\omega+\sqrt{(x\cdot\omega)^2+1-\n{x}^2},
\quad (x,\omega)\in \ol{G}\times S
\]
and thus it is clearly continuous on $\ol{G}\times S$.

The spatial gradient of the escape time map $t=\tilde{t}|_{G\times S}$ is given by
\[
\nabla_x t(x,\omega)=\omega+\frac{(x\cdot\omega)\omega-x}{\sqrt{(x\cdot\omega)^2+1-\n{x}^2}},
\quad (x,\omega)\in G\times S
\]
and is clearly continuous on $G\times S$.
However, $\nabla_x t$ does \emph{not} extend continuously onto $\ol{G}\times S$.
\end{example}

\begin{lemma}\label{nis-le2}
Suppose that $f\in C(\ol G\times S\times I)$ such that ${\rm supp}(f)\cap (\Gamma_- \cup\Gamma_0)=\emptyset$.
Then for $\psi$ defined by (\ref{nis-3}) 
\be\label{is}
{\rm supp}(\psi)\cap (\Gamma_- \cup\Gamma_0)=\emptyset.
\ee
\end{lemma}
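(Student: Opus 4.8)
The plan is to show that $\supp(\psi)$ stays a positive distance away from the closed set $\Gamma_-\cup\Gamma_0 = \ol\Gamma_-$, by tracing the support back along characteristics. Fix a point $(y,\omega,E)\in\Gamma_-\cup\Gamma_0$; I want to produce a neighbourhood of it on which $\psi$ vanishes. The formula \eqref{nis-3} makes this transparent: $\psi(x,\omega,E)$ is an integral of $f(x-t\omega,\omega,E)$ over $t\in[0,t(x,\omega)]$, so $\psi(x,\omega,E)=0$ as soon as the whole backward segment $\{x-t\omega : 0\le t\le t(x,\omega)\}$ (with the same $\omega$ and $E$) misses $\supp(f)$. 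Hence it suffices to show: for every $(y,\omega,E)\in\Gamma_-\cup\Gamma_0$ there is a neighbourhood $V$ in $\ol G\times S\times I$ such that for all $(x,\omega',E')\in V$ the set $\{(x-t\omega',\omega',E') : 0\le t\le \tilde t(x,\omega')\}$ is disjoint from $\supp(f)$, where $\tilde t$ is the extended escape time of \eqref{olt}. Since $\supp(f)\cap\ol\Gamma_- = \emptyset$ and both sets are compact, \eqref{d-pos}-type reasoning gives $\delta:=\dist(\supp(f),\ol\Gamma_-)>0$.

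The key step is a continuity/compactness argument for the backward characteristic map. By Proposition \ref{pr:continuity_of_tilde_t:1}, $\tilde t:\ol G\times S\to\R_+$ is continuous, so the map $\Phi:(\ol G\times S\times I)\times[0,1]\to \ol G\times S\times I$, $\Phi((x,\omega,E),\theta) := (x-\theta\,\tilde t(x,\omega)\,\omega,\ \omega,\ E)$ is continuous (note $x-\theta\tilde t(x,\omega)\omega\in\ol G$ by convexity and Lemma \ref{le:basic_geometric_properties:1}(iv), since it lies on the segment from the exit point $y\in\Gamma'_-$ to $x$). Now for a boundary point $(y,\omega,E)$ with $\omega\cdot\nu(y)\le 0$: by Lemma \ref{le:basic_geometric_properties:1}(ii) (case $\omega\cdot\nu(y)<0$) or the definition $\tilde t(y,\omega)=0$ (case $\omega\cdot\nu(y)=0$), we have $\tilde t(y,\omega)=0$, so $\Phi((y,\omega,E),\theta)=(y,\omega,E)$ for all $\theta$; in particular the whole backward segment at $(y,\omega,E)$ is just $\{(y,\omega,E)\}\subset\ol\Gamma_-$, hence at distance $\ge\delta$ from $\supp(f)$. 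By compactness of $[0,1]$ and continuity of $\Phi$, there is a neighbourhood $V$ of $(y,\omega,E)$ in $\ol G\times S\times I$ such that $\Phi(V\times[0,1])$ lies in the (open) $\delta$-neighbourhood of $\ol\Gamma_-$, hence is disjoint from $\supp(f)$. For $(x,\omega',E')\in V\cap(G\times S\times I)$ this says exactly that $f(x-t\omega',\omega',E')=0$ for all $0\le t\le t(x,\omega')$, so $\psi(x,\omega',E')=0$ by \eqref{nis-3}. Thus $\psi\equiv 0$ on $V\cap(G\times S\times I)$; taking such $V$ over a finite subcover of the compact set $\Gamma_-\cup\Gamma_0$ shows $\supp(\psi)$ stays away from $\Gamma_-\cup\Gamma_0$, which is \eqref{is}.

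The main obstacle is handling the escape time near the characteristic set $\Gamma_0$, where $t(x,\omega)$ is the delicate quantity (indeed Remark \ref{nonc} warns it can be discontinuous for merely convex $G$); this is precisely why strict convexity and Proposition \ref{pr:continuity_of_tilde_t:1} are invoked, and why one works with the \emph{extended} map $\tilde t$ (defined to be $0$ on $\Gamma'_-\cup\Gamma'_0$) rather than with $t$ on the open set alone. A secondary point to check is that the backward segment $x - t\omega'$ for $t\in[0,\tilde t(x,\omega')]$ genuinely remains in $\ol G$ and that its endpoint lies on $\Gamma'_-$, which is Lemma \ref{le:basic_geometric_properties:1}(iv) together with convexity; this guarantees that $\Phi$ maps into $\ol G\times S\times I$ so that the distance estimate against $\ol\Gamma_-\subset\ol G\times S\times I$ makes sense. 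No smoothness of $f$ beyond continuity is needed, and no regularity of $\Sigma$ beyond what makes \eqref{nis-3} the valid solution formula.
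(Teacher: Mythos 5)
Your proof is correct. It rests on the same two pillars as the paper's argument: the solution formula \eqref{nis-3} propagates supports backward along characteristics, and the continuity of the extended escape time $\tilde t$ on $\ol G\times S$ (Proposition \ref{pr:continuity_of_tilde_t:1}), together with Lemma \ref{le:basic_geometric_properties:1}(iv), controls those characteristics up to the boundary. The implementations differ in a way worth noting. The paper works globally with the superlevel set $\ol U_\eta=\{z\in\ol G\times S\times I\ |\ \tilde t(z)\geq\eta\}$ for $\eta={\rm dist}({\rm supp}(f),\ol\Gamma_-)$: it shows ${\rm supp}(f)\subset\ol U_\eta$ and then that $\ol U_\eta$ absorbs ${\rm supp}(\psi)$ as well, using the exact transport identity $\tilde t(x-t\omega,\omega)=\tilde t(x,\omega)-t$, which makes the claim that the whole backward segment misses ${\rm supp}(f)$ automatic, with no compactness argument in the $t$-variable; the conclusion then follows because $\tilde t\equiv 0$ on $\ol\Gamma_-$ while $\tilde t\geq\eta>0$ on $\ol U_\eta$. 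You instead argue locally at each point of $\ol\Gamma_-$, replacing $\tilde t$ by the Euclidean distance to $\ol\Gamma_-$ and invoking the tube lemma for the characteristic flow map $\Phi$. Both routes are sound and use the same hypotheses; the paper's version is slightly more economical because $\tilde t$ already plays the role of your distance function and satisfies the exact cocycle identity along characteristics, so the uniformity over $t\in[0,t(x,\omega)]$ comes for free. One cosmetic remark: you do not need Lemma \ref{le:basic_geometric_properties:1}(ii) to conclude $\tilde t(y,\omega)=0$ for $(y,\omega)\in\Gamma'_-\cup\Gamma'_0$ --- that is immediate from the definition \eqref{olt}.
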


\begin{proof}
With a slight abuse of notation, we will write below $\tilde{t}(x,\omega,E)=\tilde{t}(x,\omega)$
whenever $(x,\omega,E)\in U$.

Let $\eta>0$ and let $U:=\ol{G}\times S\times I$.  Define (cf. \cite{nishitani96} where the weight function $\phi$ is in the role of $\tilde t(x,\omega)$) 
\[
\ol{U}_\eta:=\{z\in U\ |\ \tilde{t}(z)\geq \eta\}.
\]
Since $\tilde{t}$ is continuous\footnote{
The map $\ol{G}\times S\to\R_+$; $(x,\omega)\to \tilde{t}(x,\omega)$
is continuous and hence the map
$U=\ol{G}\times S\times I\to\R_+$; $(x,\omega,E)\mapsto \tilde{t}(x,\omega,E)=\tilde{t}(x,\omega)$
is continuous
}
by Proposition \ref{pr:continuity_of_tilde_t:1},
the set $\ol{U}_{\eta}$ is closed in $U$.
The assumption\linebreak
${\rm supp}(f)\cap (\Gamma_- \cup\Gamma_0)=\emptyset$ implies that there exists $\eta>0$ such that
\be\label{nis-10-d}
{\rm supp}(f)\subset \ol{U}_\eta.
\ee
This can be seen as follows.  Note that $\Gamma_-\cup\Gamma_0=\ol\Gamma_-$. Since by the assumption ${\rm supp}(f)\cap \ol\Gamma_- =\emptyset$ we have
$d':={\rm dist}({\rm supp}(f),\ol\Gamma_-)>0$. Choose $\eta:=d'$.
Then (\ref{nis-10-d}) holds: Let $z=(x,\omega,E)\in {\rm supp}(f)$.
Then ${\rm dist}(z,\ol\Gamma_-)\geq {\rm dist}({\rm supp}(f),\ol\Gamma_-)=\eta>0$
and hence
$z\in (\ol G\times S\times I)\setminus \ol\Gamma_-=(G\times S\times I)\cup\Gamma_+$.
Therefore, by (iv) of Lemma \ref{le:basic_geometric_properties:1}
the point $z$ is of the form $z=(y+s\omega,\omega,E)$ where $(y,\omega,E)\in \Gamma_-$ and $s>0$.
We find that $\tilde{t}(z)=\tilde t(y+s\omega,\omega,E)=s$ and
\[
\eta={\rm dist}({\rm supp}(f),\ol\Gamma_-)\leq {\rm dist}(z,\ol\Gamma_-)
\leq {\rm dist}(z,(y,\omega,E))
={\rm dist}((y+s\omega,\omega,E),(y,\omega,E)) 
=s
\]
and so $\tilde t(z)=s\geq\eta$, that is, $z\in \ol{U}_\eta$.

We verify that 
\be\label{nis-8-b}
{\rm supp}(\psi)\subset \ol U_\eta
\ee
which is equivalent to
\be\label{nis-9-c}
U\setminus \ol U_\eta \subset U\setminus{\rm supp}(\psi).
\ee
Let $z_0:=(x_0,\omega_0,E_0)\in U\setminus \ol U_\eta$. 
Since $ U\setminus \ol{U}_\eta$ is  open in $U$ there exists a  neighbourhood $V_{z_0}$ of $z_0$ such that $V_{z_0}\cap U \subset  U\setminus \ol{U}_\eta$.
Then $\tilde{t}(x,\omega)<\eta$ for all $(x,\omega,E)\in V_{z_0}\cap U$. We find that
for $0\leq t\leq \tilde t(x,\omega)$ 
\[
\tilde t(x-t\omega,\omega)=\tilde t(x,\omega)-t<\eta-t\leq\eta.
\]
Hence $(x-t\omega,\omega,E)\not\in \ol U_\eta$ for all $(x,\omega,E)\in V_{z_0}\cap U,\ 0\leq t\leq \tilde t(x,\omega)$ which implies by (\ref{nis-10-d}) that 
$(x-t\omega,\omega,E)\not\in {\rm supp}(f)$ for all $(x,\omega,E)\in V_{z_0}\cap U,\ 0\leq t\leq \tilde t(x,\omega) $. That is why, $f(x-t\omega,\omega,E)=0$ for $(x,\omega,E)\in V_{z_0}\cap U,\ 0\leq t\leq \tilde t(x,\omega)$ which then yields
\[
\psi(x,\omega,E)=\int_0^{\tilde{t}(x,\omega)}e^{-\int_0^t\Sigma(x-s\omega,\omega,E)ds}f(x-t\omega,\omega,E)dt=0,
\quad \forall (x,\omega,E)\in V_{z_0}\cap U.
\]
We conclude that   $z_0\not \in {\rm supp}(\psi)$ and so (\ref{nis-9-c}) holds. 

From (\ref{nis-8-b})
it immediately follows the assertion ${\rm supp}(\psi)\cap (\Gamma_- \cup\Gamma_0)=\emptyset$ since 
$\tilde{t}(x,\omega)=0$ on $\ol{\Gamma'}_-=\Gamma'_-\cup\Gamma'_0$. Namely, suppose that ${\rm supp}(\psi)\cap (\Gamma_- \cup\Gamma_0)\not=\emptyset$ and let $z=(x,\omega,E)\in {\rm supp}(\psi)\cap (\Gamma_- \cup\Gamma_0)$.
Then $\tilde{t}(x,\omega)=0$ (since $z\in \ol{\Gamma}_-$ i.e. $(x,\omega)\in \ol{\Gamma'}_-$).
On the other hand, by (\ref{nis-8-b}) it holds $\tilde{t}(x,\omega)\geq\eta>0$ (since $z\in {\rm supp}(\psi))$
which is a contradiction. Hence (\ref{is}) holds. This completes the proof.

\end{proof}

\begin{theorem}\label{nis-th1}
Suppose that $\Sigma\in \s W^{\infty,(m,0,0)}(G\times S\times I)$.
Let $C':= c(m)\n{\Sigma}_{W^{\infty,(m,0)}(G\times S\times I)}$, where $c(m)$ is as in Lemma \ref{nis-le1}. Then for every $C>C'$
\be\label{nis-5}
R(\widetilde P_m+C\s I)=H_0^{(m,0,0)}(G\times S\times I,\Gamma_-)
\ee
and so for any $f\in H_0^{(m,0,0)}(G\times S\times I,\Gamma_-)$ there exists
a unique strong solution $\psi\in D(\widetilde{P}_m)\subset H_0^{(m,0,0)}(G\times S\times I,\Gamma_-)$ of the problem
\be\label{nis-6}
\omega\cdot\nabla_x\psi+\Sigma\psi+C\psi=f,\quad \psi_{|\Gamma_-}=0.
\ee
\end{theorem}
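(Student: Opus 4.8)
The plan is to obtain surjectivity of $\widetilde P_m+C\s I$ by combining the coercivity of Lemma~\ref{nis-le1} with the explicit solution formula \eqref{nis-3} on a dense subspace. By the strict estimate \eqref{nis-1-c}, $A:=\widetilde P_m+C\s I$ is injective on $H:=H_0^{(m,0,0)}(G\times S\times I,\Gamma_-)$ and, as already noted after Lemma~\ref{nis-le1}, has closed range there; hence it suffices to prove that $R(A)$ is dense, since then \eqref{nis-5} follows, and uniqueness of the strong solution of \eqref{nis-6} is immediate from \eqref{nis-1-c}. Because $\bigcap_{k=1}^\infty C_0^{(k,0,0)}(\ol G\times S\times I,\Gamma_-)$ is dense in $H$ by definition, it is enough to show that every $f$ in this subspace lies in $R(A)$.

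Fix such an $f$ (so $f$ is $C^\infty$ in $x$, jointly continuous, and $\delta:={\rm dist}({\rm supp}(f),\Gamma_-\cup\Gamma_0)>0$, recalling $\Gamma_-\cup\Gamma_0=\ol\Gamma_-$), and define $\psi$ by \eqref{nis-3} with $\Sigma$ replaced by $\Sigma+C$:
\[
\psi(x,\omega,E)=\int_0^{t(x,\omega)}g(x,\omega,E,t)\,dt,\qquad g(x,\omega,E,t):=e^{-\int_0^t(\Sigma+C)(x-s\omega,\omega,E)\,ds}f(x-t\omega,\omega,E).
\]
Then $\psi\in W^2(G\times S\times I)$, $\psi_{|\Gamma_-}=0$, and $\omega\cdot\nabla_x\psi+(\Sigma+C)\psi=f$, as recalled in the excerpt. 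By Lemma~\ref{nis-le2} (applicable since such $f$ is continuous on $\ol G\times S\times I$ with ${\rm supp}(f)\cap(\Gamma_-\cup\Gamma_0)=\emptyset$) together with the description of $\ol U_\eta$ in its proof, ${\rm supp}(\psi)\subset\ol U_\eta$ lies at positive distance from $\Gamma_-\cup\Gamma_0=\ol{\Gamma'}_-\times I$; in particular $\psi$ vanishes on a neighbourhood of $\Gamma_-\cup\Gamma_0$ in $\ol G\times S\times I$.

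The crux is to show $\psi\in H^{(m,0,0)}(G\times S\times I)$, i.e.\ $\partial_x^\alpha\psi\in L^2$ for $|\alpha|\leq m$. The point is that $g$ is flat at the upper integration limit: by Lemma~\ref{le:basic_geometric_properties:1}(iv) (strict convexity) the entry point $y:=x-t(x,\omega)\omega\in\partial G$ satisfies $\omega\cdot\nu(y)<0$, so $(y,\omega,E)\in\Gamma_-$, and therefore for $t\in(\,t(x,\omega)-\delta,\,t(x,\omega)\,]$ the point $(x-t\omega,\omega,E)$ lies within distance $t(x,\omega)-t<\delta$ of $\Gamma_-\cup\Gamma_0$, hence outside ${\rm supp}(f)$, whence $g(x,\omega,E,t)=0$. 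Consequently the extension $G$ of $g$ by $0$ past $\{0\leq t\leq t(x,\omega)\}$ vanishes together with all its $x$-derivatives in a neighbourhood of the seam $\{t=t(x,\omega)\}$, so --- using only continuity of $t$ (Proposition~\ref{pr:continuity_of_tilde_t:1}) to see that the relevant sets are open --- $G$ has $x$-derivatives of all orders $\leq m$, equal to $\partial_x^\alpha g$ where $t<t(x,\omega)$ and to $0$ beyond, bounded uniformly by $\max_{|\alpha|\leq m}\n{\partial_x^\alpha g}_{L^\infty}<\infty$, which is finite because $\Sigma\in\s W^{\infty,(m,0,0)}(G\times S\times I)$ and $f\in C^{(m,0,0)}(\ol G\times S\times I)$ (to treat $g$ classically one may first take $\Sigma\in C^{(\infty,0,0)}(\ol G\times S\times I)$, dense in $\s W^{\infty,(m,0,0)}$, and pass to the limit in $\Sigma$). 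Differentiating $\psi=\int_0^d G\,dt$, $d=\diam(G)$, under the integral sign gives $\partial_x^\alpha\psi=\int_0^d\partial_x^\alpha G\,dt\in L^\infty(G\times S\times I)\subset L^2(G\times S\times I)$ for $|\alpha|\leq m$; crucially no derivative of the (only $C^1$) escape time ever appears, precisely because of the flatness of $g$ at $t=t(x,\omega)$. Combining $\psi\in H^{(m,0,0)}(G\times S\times I)$ with ${\rm supp}(\psi)\cap(\Gamma_-\cup\Gamma_0)=\emptyset$ and Proposition~\ref{pr:H^(m,0,0,Gamma_-):characterization:1} yields $\psi\in H$.

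Finally, $\psi\in H$ and the boundedness \eqref{li} of $v\mapsto\Sigma v$ on $H$ give $(\Sigma+C)\psi\in H$, hence $\omega\cdot\nabla_x\psi=f-(\Sigma+C)\psi\in H$. Since $\psi$ vanishes on a neighbourhood of $\Gamma_-\cup\Gamma_0$ and $G$ is convex, a standard regularization --- rescaling $\psi$ slightly towards an interior point of $G$ and then mollifying in $x$ --- produces $\psi_n\in\bigcap_{k=1}^\infty C_0^{(k,0,0)}(\ol G\times S\times I,\Gamma_-)=D(P_m)$ with $\psi_n\to\psi$ and $\omega\cdot\nabla_x\psi_n\to\omega\cdot\nabla_x\psi$ in $H$ (as $\omega\cdot\nabla_x$ commutes with $x$-mollification), hence also $\Sigma\psi_n\to\Sigma\psi$ in $H$ by \eqref{li}, so $P_m\psi_n\to\omega\cdot\nabla_x\psi+\Sigma\psi$ in $H$. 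Thus $\psi\in D(\widetilde P_m)$ and $A\psi=\omega\cdot\nabla_x\psi+(\Sigma+C)\psi=f$, which proves \eqref{nis-5} and the asserted existence; uniqueness follows from \eqref{nis-1-c}. The main obstacle is the regularity step: the escape time $t(x,\omega)$ is only $C^1$, with gradient blowing up towards the characteristic set $\Gamma_0$ (Example~\ref{ex:pr:continuity_of_tilde_t:1:1}), and it is exactly the hypothesis that the data vanish near $\Gamma_-\cup\Gamma_0$ that renders the integrand flat at the seam $t=t(x,\omega)$ and thereby removes this singular behaviour from the differentiation; the regularization near $\partial G$ in the last step is routine given convexity of $G$.
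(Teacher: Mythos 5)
Your proposal is correct and follows the same overall strategy as the paper's proof: coercivity gives injectivity and closed range, so it suffices to show that the dense subspace $\bigcap_{k}C_0^{(k,0,0)}(\ol G\times S\times I,\Gamma_-)$ lies in the range, and this is done via the explicit solution formula \eqref{nis-3}; in both arguments the decisive point is that ${\rm dist}({\rm supp}(f),\ol\Gamma_-)>0$ forces the integrand to vanish near the upper limit $t=t(x,\omega)$, so no derivative of the merely $C^1$ escape time (which degenerates near $\Gamma_0$) ever appears, and the non-smooth $\Sigma$ is handled by approximation exactly as in the paper's Part B. The execution of the regularity step differs: the paper differentiates \eqref{eq:th:nis-th1:psi:1} once, observes that the boundary term $h_3$ containing $\p{t}{x_j}$ vanishes, and then runs an induction on $|\alpha|$ through the transported equations \eqref{nis-23}, obtaining the stronger conclusion $\psi\in C_0^{(m,0,0)}(\ol G\times S\times I,\Gamma_-)$, i.e. $\psi\in D(P_m)$ itself when the data are smooth. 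You instead extend the integrand by zero past the seam $\{t=t(x,\omega)\}$, note that it vanishes identically in a neighbourhood of the seam, and differentiate under the fixed-limit integral in one shot; this avoids the induction, but yields only $\psi\in H^{(m,0,0)}$ with support off $\ol\Gamma_-$, so you must pay for it with the final dilation-plus-mollification step to place $\psi$ in $D(\widetilde P_m)$ (Proposition \ref{pr:H^(m,0,0,Gamma_-):characterization:1} alone gives $\psi\in H_0^{(m,0,0)}(G\times S\times I,\Gamma_-)$ but says nothing about convergence of $P_m\psi_n$, so that extra step is genuinely needed and you are right to include it). Both that step and the passage to the limit in $\Sigma$ are only sketched in your write-up; they are standard and do go through, and the $\Sigma$-limit is carried out in detail in the paper's Part B if you want a template.
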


\begin{proof}

By (\ref{nis-1}) the kernel $N(\widetilde P_m+C\s{I})=\{0\}$ and so the solution $\psi$, if it exists, is unique.
It suffices to show that (\ref{nis-5}) holds.
We show in Parts A and B below that
\be\label{nis-7}
\cap_{k=1}^\infty C_0^{(k,0,0)}(\ol G\times S\times I,\Gamma_-)\subset R(\widetilde P_m+C\s I)
\ee
which implies (\ref{nis-5}) since $\cap_{k=1}^\infty C_0^{(k,0,0)}(\ol G\times S\times I,\Gamma_-)$ is dense in $H_0^{(m,0,0)}(G\times S\times I,\Gamma_-)$ and since
the range $R(\widetilde P_m+C\s I)$ is closed in $H_0^{(m,0,0)}(G\times S\times I,\Gamma_-)$ (as mentioned above after Eq. \eqref{nis-1-c}).

\medskip
\noindent {\bf A.}
At first, assume that $\Sigma\in C^{(\infty,0,0)}(\ol G\times S\times I)$.
We prove that for any $f\in C_0^{(m,0,0)}(\ol G\times S\times I,\Gamma_-)$ the solution $\psi$ of
the problem $(P+C)\psi=f$, $\psi_{|\Gamma_-}=0$, that is,
\begin{align}\label{eq:th:nis-th1:psi:1}
\psi(x,\omega,E)=\int_0^{t(x,\omega)}e^{-\int_0^t(\Sigma+C)(x-s\omega,\omega,E)ds}f(x-t\omega,\omega,E)dt,
\quad (x,\omega,E)\in G\times S\times I
\end{align}
belongs to $C_0^{(m,0,0)}(\ol G\times S\times I,\Gamma_-)$.
This implies that for any $f\in  \cap_{k=1}^\infty C_0^{(k,0,0)}(\ol G\times S\times I,\Gamma_-)$
the solution $\psi\in \cap_{k=1}^\infty C_0^{(k,0,0)}(\ol G\times S\times I,\Gamma_-)$ and hence
\eqref{nis-7} holds. We proceed by induction with respect to $m$.

\medskip
\noindent {\bf A.1.}
Consider the first order derivatives  $\partial^\alpha_x\psi,\ |\alpha|=1$. Suppose that
$f\in C_0^{(1,0,0)}(\ol G\times S\times I,\Gamma_-)$. Recall that the partial derivatives ${\p t{x_j}}(x,\omega)$ exist and they are continuous in $G\times S$ since $G$ is convex (\cite[Proposition 4.7]{tervo17-up}).
We have for $(x,\omega,E)\in G\times S\times I$ and $j=1,2,3$,

\bea\label{hreg2}
{\p {\psi}{x_j}}(x,\omega,E)
&=\int_0^{t(x,\omega)}\big(-\int_0^t{\p \Sigma{x_j}}(x-s\omega,\omega,E)ds\big)
\, e^{-\int_0^t(\Sigma+C)(x-s\omega,\omega,E)ds}
\, f(x-t\omega,\omega,E)dt\nonumber\\
&
+\int_0^{t(x,\omega)} e^{-\int_0^t(\Sigma+C)(x-s\omega,\omega,E)ds}\, {\p {f}{x_j}}(x-t\omega,\omega,E)dt
\nonumber\\
&
+
e^{-\int_0^{t(x,\omega)}(\Sigma+C)(x-s\omega,\omega,E)ds}\, f(x-t(x,\omega)\omega,\omega,E)\, {\p t{x_j}}(x,\omega)\nonumber\\
&
=:h_{1}+h_{2}+h_{3}.
\eea
Since $f\in C_0^{(1,0,0)}(\ol G\times S\times I,\Gamma_-)$ by assumption
and $(x-t(x,\omega)\omega,\omega,E)\in\Gamma_-$ by (iv) of Lemma \ref{le:basic_geometric_properties:1},
the last term $h_{3}=0$ and so
\bea\label{hreg2-a}
{\p {\psi}{x_j}}(x,\omega,E)
&=\int_0^{t(x,\omega)}\big(-\int_0^t{\p \Sigma{x_j}}(x-s\omega,\omega,E)ds\big)
\, e^{-\int_0^t(\Sigma+C)(x-s\omega,\omega,E)ds}
\, f(x-t\omega,\omega,E)dt\nonumber\\
&
+\int_0^{t(x,\omega)} e^{-\int_0^t(\Sigma+C)(x-s\omega,\omega,E)ds}\, {\p {f}{x_j}}(x-t\omega,\omega,E)dt
\eea
for all $(x,\omega,E)\in G\times S\times I$ and $j=1,2,3$.
For strictly convex domain $G$ the extension of the escape time mapping $\tilde{t}$ (defined by (\ref{olt})) is continuous in $\ol{G}\times S$
(see Proposition \ref{pr:continuity_of_tilde_t:1})
and therefore we deduce from \eqref{hreg2-a} that
the partial derivatives ${\p {\psi}{x_j}},\ j=1,2,3$, belong to $C(\ol G\times S\times I)$.

By Lemma \ref{nis-le2}, it holds ${\rm supp}(\psi)\cap (\Gamma_-\cup\Gamma_0)=\emptyset$
(since ${\rm supp}(f)\cap (\Gamma_-\cup\Gamma_0)=\emptyset$).
Hence $\psi\in C_0^{(1,0,0)}(\ol G\times S\times I,\Gamma_-)$ and so the claim holds with $m=1$.

\medskip
\noindent {\bf A.2.}
Suppose that the claim holds for some $m\in\N$
Let $f\in C_0^{(m+1,0,0)}(\ol G\times S\times I,\Gamma_-)$.
The induction hypothesis implies that $\psi\in C_0^{(m,0,0)}(\ol{G}\times S\times,\Gamma_-)$
and hence $(\partial_x^\alpha\psi)_{|\Gamma_-}=0$ for $|\alpha|\leq m$.
Furthermore, since $\psi$ satisfies $\omega\cdot\nabla_x \psi+(\Sigma+C)\psi=f$ on $G\times S\times I$,
its partial derivatives $\partial_x^\alpha\psi$ for $|\alpha|\leq m$ satisfy
\be\label{nis-23}
\omega\cdot\nabla_x(\partial_x^\alpha\psi)+(\Sigma+C)(\partial_x^\alpha\psi)
=
\partial_x^\alpha f
-\sum_{\beta<\alpha}{\alpha \choose\beta}(\partial_x^{\alpha-\beta}\Sigma)(\partial_x^\beta\psi)=:f_\alpha
\quad\textrm{and}\quad
(\partial_x^\alpha\psi)_{|\Gamma_-}=0,
\ee
where $f_\alpha\in C_0^{(1,0,0)}(\ol G\times S\times I,\Gamma_-)$ for each $|\alpha|\leq m$.
Due to Part A.1., the unique solution $\partial_x^\alpha\psi$ of (\ref{nis-23})
belongs to $C_0^{(1,0,0)}(\ol G\times S\times I,\Gamma_-)$ for $|\alpha|\leq m$
which then implies that $\psi\in C_0^{(m+1,0,0)}(\ol G\times S\times I,\Gamma_-)$. Hence the claim holds with $m+1$.

\medskip
\noindent {\bf B.}
More generally, let $\Sigma\in \s W^{\infty,(m,0,0)}(G\times S\times I)$
and let $\{\Sigma_n\}\subset C^{(\infty,0,0)}(\ol G\times S\times I)$ be a sequence
such that $\Sigma_n\to \Sigma$ in $W^{\infty,(m,0,0)}(G\times S\times I)$ as $n\to\infty$.
Then by Part A., for $f\in \cap_{k=1}^\infty C_0^{(k,0,0)}(\ol G\times S\times I,\Gamma_-)$
the functions (see \eqref{eq:th:nis-th1:psi:1})
\[
\psi_n(x,\omega,E):=\int_0^{t(x,\omega)}e^{-\int_0^t(\Sigma_n+C)(x-s\omega,\omega,E)ds}f(x-t\omega,\omega,E)dt,
\quad (x,\omega,E)\in G\times S\times I,
\quad n=1,2,\dots
\]
belong to $\cap_{k=1}^\infty C_0^{(k,0,0)}(\ol G\times S\times I,\Gamma_-) = D(P_m)$.

Moreover,
\be\label{nis-26}
\psi_n\longto_{n\to\infty} \psi \quad \textrm{in $H_0^{(m,0,0)}(\ol G\times S\times I,\Gamma_-)$}.
\ee
For $m=1$ the convergence \eqref{nis-26} can be seen from \eqref{hreg2-a}.
For general $m\geq 1$ it can be seen by using induction as follows.

We first notice that
by Part A.2., Eq. \eqref{nis-23} and by \eqref{eq:th:nis-th1:psi:1},
$\partial_x^\alpha\psi_n$ can be expressed as,
with $f_{n,\alpha}$ defined by \eqref{nis-23} after replacing $\Sigma$ by $\Sigma_n$,
\begin{align}\label{eq:th:nis-th1:partial_x_alpha_psi_n:2}
(\partial_x^\alpha \psi_n)(x,\omega,E)
={}& \int_0^{t(x,\omega)} e^{-\int_0^t(\Sigma_n+C)(x-s\omega,\omega,E)ds} f_{n,\alpha}(x-t\omega,\omega,E)dt
\end{align}
with
\[
f_{n,\alpha}(x,\omega,E) = \partial_x^\alpha f - \sum_{\beta<\alpha}{\alpha \choose\beta}(\partial_x^{\alpha-\beta}\Sigma_n)(\partial_x^\beta\psi_n)(x,\omega,E)
\]
for all $(x,\omega,E)\in G\times S\times I$, $n=1,2,\dots$ and all $|\alpha|\leq m$.

Suppose that $\psi_n\to \psi$ in $H_0^{(k,0,0)}(\ol G\times S\times I,\Gamma_-)$ as $n\to\infty$ for some $1\leq k\leq m-1$.
From the above, we know that this is true for $k=1$.

Let $|\alpha|\leq k+1$.
From the induction hypothesis, we deduce
that $\partial_x^\beta\psi_n\to \partial_x^\beta\psi$ in $L^2(G\times S\times I)$ as $n\to\infty$
for all $\beta<\alpha$ (since then $|\beta|\leq k$).
Because $\Sigma_n\to \Sigma$ in $W^{\infty,(m,0,0)}(G\times S\times I)$ when $n\to\infty$,
we have $\partial_x^{\alpha-\beta}\Sigma_n\to \partial_x^{\alpha-\beta}\Sigma$ in $L^\infty(G\times S\times I)$ as $n\to\infty$
also for all $\beta<\alpha$.
From this one deduces that
$f_{n,\alpha}\to f_{\alpha}$ in $L^2(G\times S\times I)$ as $n\to\infty$
for all $|\alpha|\leq k+1$, where $f_{\alpha}$ is defined by \eqref{nis-23}.

Noticing that
$e^{-\int_0^t(\Sigma_n+C)(x-s\omega,\omega,E)ds}\to e^{-\int_0^t(\Sigma+C)(x-s\omega,\omega,E)ds}$
in $L^\infty(G\times S\times I)$ as functions of $(x,\omega,E)$,
one then shows using \eqref{eq:th:nis-th1:partial_x_alpha_psi_n:2} that
$\partial_x^\alpha\psi_n\to \partial_x^\alpha\psi$ in $L^2(G\times S\times I)$ when $n\to\infty$
for all $|\alpha|\leq k+1$,
i.e. $\psi_n\to\psi$ in $H_0^{(k+1,0,0)}(\ol G\times S\times I,\Gamma_-)$.
Therefore, by induction, the convergence \eqref{nis-26} holds (for $k=m$).

Now that $\Sigma_n\to \Sigma$ in $W^{\infty,(m,0,0)}(G\times S\times I)$
and $\psi_n\to\psi$ in $H_0^{(m,0,0)}(\ol G\times S\times I,\Gamma_-)$
as $n\to\infty$,
one has
\[
(P_m+C\s I)\psi_n={}& \omega\cdot\nabla_x\psi_n+\Sigma\psi_n+C\psi_n
=\omega\cdot\nabla_x\psi_n+\Sigma_n\psi_n+C\psi_n+\Sigma\psi_n-\Sigma_n\psi_n
\\
={}& f+(\Sigma-\Sigma_n)\psi_n\longto_{n\to\infty} f+0=f
\quad \textrm{in $H_0^{(m,0,0)}(\ol G\times S\times I,\Gamma_-)$}.
\]
Since also $\psi_n\in D(P_m)$ for all $n$ (by the above),
we conclude that $\psi\in D(\widetilde P_m)$ and $(\widetilde P_m+C\s I)\psi=f$.
Hence \eqref{nis-7} holds and the proof is complete.

\end{proof}

\subsection{Higher-order Regularity Results for Convection-Scattering Equation}\label{conv-scat-eq}

Secondly, consider the transport operator of the form
\[
T\psi:=\omega\cdot\nabla_x\psi+\Sigma\psi-K_{r}\psi.
\] 
For simplicity, we assume that the restricted collision operator $K_r$ is of the form
\begin{align}\label{def:K_r^2:1}
(K_r\psi)(x,\omega,E)=(K_r^2\psi)(x,\omega,E):=\int_{S'}\sigma^2(x,\omega',\omega,E)\psi(x,\omega',E)d\omega'.
\end{align}

We find that in general $K_r\psi\not\in H_0^{(m,0,0)}(G\times S\times I,\Gamma_-)$ for
$\psi\in H_0^{(m,0,0)}(G\times S\times I,\Gamma_-)$.
Thus we need to make some additional assumptions on the cross-section $\sigma^2$.
Let 
\begin{multline*}
C_0^{(\infty,0,0)}(\ol G\times S\times I, L^1(S'),\Gamma_-)
:=\{\sigma^2\in C^\infty(\ol G\times S\times I,L^1(S'))\ |\ 
{\rm supp}(\sigma^2)\cap (\Gamma_- \cup\Gamma_0)=\emptyset\}.
\end{multline*}
Furthermore, let $\s W_0^{\infty,(m,0,0)}(G\times  S\times I,L^1(S'),\Gamma_-)$ be the completion of the space\linebreak
$C_0^{(\infty,0,0)}(\ol G\times S\times I,L^1(S'),\Gamma_-)\cap W^{\infty,(m,0,0)}(G\times S'\times I,L^1(S))$ with respect to the norm
\[
\n{\cdot}_{W^{\infty,(m,0,0)}(G\times S\times I,L^1(S'))}
+\n{\cdot}_{W^{\infty,(m,0,0)}(G\times S'\times I,L^1(S))}.
\]

\begin{remark}\label{H0}
We find that $\sigma^2\in \s W_0^{\infty,(m,0,0)}(G\times S\times I,L^1(S'),\Gamma_-)$ if 
\be\label{s2-h0}
\sigma^2\in \s W_0^{\infty,m}\big(G,L^\infty(S\times I,L^1(S'))\big)\cap
W^{\infty,m}\big(G,L^\infty(S'\times I,L^1(S))\big),
\ee
where $\s W_0^{\infty,m}\big(G,L^\infty(S\times I,L^1(S'))\big)$ is the completion of $C_0^{\infty}\big(G,L^\infty(S\times I,L^1(S'))\big)$ with respect to the norm $\n{\cdot}_{W^{\infty,m}(G,L^\infty(S\times I,L^1(S')))}$. Here for a Banach space $X$ by $W^{\infty,m}(G,X)$ we mean the $X$-valued $L^\infty$-based Sobolev space of order $m\in\N_0$.
The condition (\ref{s2-h0}) means that $(\partial_x^\alpha\sigma^2)_{\big|(\partial G)\times S'\times S\times I}=0$ for $|\alpha|\leq m-1$ in the Sobolev sense.
\end{remark}

We show that $K_r$ is a bounded operator $H^{(m,0,0)}(G\times S\times I)\to H_0^{(m,0,0)}(G\times S\times I,\Gamma_-)$.

\begin{theorem}\label{el-k-b}
Suppose that $\sigma^2:G\times S'\times S\times I\to\R_+$ is a non-negative 
Borel-measurable function such that
\be\label{nis-9}
\sigma^2\in \s  W_0^{\infty,(m,0,0)}(G\times S\times I,L^1(S'),\Gamma_-).
\ee
Then $K_r: H^{(m,0,0)}(G\times S\times I)\to H_0^{(m,0,0)}(G\times S\times I,\Gamma_-)$ is a bounded  operator.
\end{theorem}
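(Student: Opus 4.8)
The plan is to prove the two facts contained in the assertion separately: the a priori estimate $\n{K_r\psi}_{H^{(m,0,0)}(G\times S\times I)}\leq C\,\n{\psi}_{H^{(m,0,0)}(G\times S\times I)}$, and the fact that the image lands in the \emph{subspace} $H_0^{(m,0,0)}(G\times S\times I,\Gamma_-)$. I would first carry this out for a smooth cross section $\sigma^2\in C_0^{(\infty,0,0)}(\ol G\times S\times I,L^1(S'),\Gamma_-)\cap W^{\infty,(m,0,0)}(G\times S'\times I,L^1(S))$ and then pass to the general case through the density built into the definition of $\s W_0^{\infty,(m,0,0)}(G\times S\times I,L^1(S'),\Gamma_-)$.

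\textbf{Step 1 (the $H^{(m,0,0)}$-estimate for smooth $\sigma^2$).} Differentiating under the integral sign --- legitimate since $\sigma^2$ has $x$-derivatives up to order $m$ with values in $L^1(S')$ --- and expanding with the Leibniz rule gives, for $|\alpha|\leq m$,
\[
\partial_x^\alpha(K_r\psi)(x,\omega,E)=\sum_{\beta\leq\alpha}{\alpha\choose\beta}\int_{S'}(\partial_x^{\alpha-\beta}\sigma^2)(x,\omega',\omega,E)\,(\partial_x^\beta\psi)(x,\omega',E)\,d\omega',
\]
first for $\psi\in C^{(\infty,0,0)}(\ol G\times S\times I)$ and then, by a routine testing-against-$C_0^\infty$ computation using Fubini, for every $\psi\in H^{(m,0,0)}(G\times S\times I)$ in the weak sense. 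For fixed $(x,E)$ each summand is an integral operator in the angular variable with kernel $(\omega,\omega')\mapsto(\partial_x^{\alpha-\beta}\sigma^2)(x,\omega',\omega,E)$; the Schur estimate (Cauchy--Schwarz in $\omega'$, then Fubini in $(\omega,\omega')$) bounds its operator norm $L^2(S')\to L^2(S)$ by $\big(\esssup_{\omega}\n{(\partial_x^{\alpha-\beta}\sigma^2)(x,\cdot,\omega,E)}_{L^1(S')}\big)^{1/2}\big(\esssup_{\omega'}\n{(\partial_x^{\alpha-\beta}\sigma^2)(x,\omega',\cdot,E)}_{L^1(S)}\big)^{1/2}$, which in turn is $\leq\n{\partial_x^{\alpha-\beta}\sigma^2}_{L^\infty(G\times S\times I,L^1(S'))}^{1/2}\n{\partial_x^{\alpha-\beta}\sigma^2}_{L^\infty(G\times S'\times I,L^1(S))}^{1/2}$, uniformly in $(x,E)$. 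Integrating over $(x,E)\in G\times I$ and summing over $\beta\leq\alpha$ and over $|\alpha|\leq m$ (using $\sqrt{ab}\leq{1\over 2}(a+b)$) yields
\[
\n{K_r\psi}_{H^{(m,0,0)}(G\times S\times I)}\leq c(m)\big(\n{\sigma^2}_{W^{\infty,(m,0,0)}(G\times S\times I,L^1(S'))}+\n{\sigma^2}_{W^{\infty,(m,0,0)}(G\times S'\times I,L^1(S))}\big)\n{\psi}_{H^{(m,0,0)}(G\times S\times I)},
\]
i.e. the desired bound with exactly the norm defining $\s W_0^{\infty,(m,0,0)}(G\times S\times I,L^1(S'),\Gamma_-)$. (The case $|\alpha|=0$ already shows that $K_r\psi\in L^2(G\times S\times I)$ is well defined for $\psi\in L^2(G\times S\times I)$, with $K_r^{(n)}\to K_r$ in $L^2$-operator norm whenever the cross sections converge in the order-$0$ norms.)

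\textbf{Step 2 (image in $H_0^{(m,0,0)}(\ldots,\Gamma_-)$; general $\sigma^2$).} Still with $\sigma^2$ smooth, the condition ${\rm supp}(\sigma^2)\cap(\Gamma_-\cup\Gamma_0)=\emptyset$ forces $\sigma^2$ to vanish (as an $L^1(S')$-valued map) on a neighbourhood of the compact set $\Gamma_-\cup\Gamma_0=\ol\Gamma_-$ in $\ol G\times S\times I$; hence $K_r\psi$ vanishes there, so ${\rm supp}(K_r\psi)\cap(\Gamma_-\cup\Gamma_0)=\emptyset$, and since $K_r\psi\in H^{(m,0,0)}(G\times S\times I)$ by Step 1, Proposition \ref{pr:H^(m,0,0,Gamma_-):characterization:1} gives $K_r\psi\in H_0^{(m,0,0)}(G\times S\times I,\Gamma_-)$. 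For general $\sigma^2\in\s W_0^{\infty,(m,0,0)}(G\times S\times I,L^1(S'),\Gamma_-)$, pick smooth $\sigma^2_n$ from the intersection space above converging to $\sigma^2$ in the defining norm, with associated operators $K_r^{(n)}$. Applying the estimate of Step 1 to $\sigma^2-\sigma^2_n$ shows that $(K_r^{(n)}\psi)_n$ is Cauchy, hence convergent, in $H^{(m,0,0)}(G\times S\times I)$; by the $|\alpha|=0$ part it also converges in $L^2(G\times S\times I)$ to $K_r\psi$, so the $H^{(m,0,0)}$-limit equals $K_r\psi$, and since each $K_r^{(n)}\psi$ lies in the closed subspace $H_0^{(m,0,0)}(G\times S\times I,\Gamma_-)$ (the $H^{(m,0,0)}$-closure of $\cap_k C_0^{(k,0,0)}(\ol G\times S\times I,\Gamma_-)$), so does $K_r\psi$. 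Passing the estimate of Step 1 to the limit completes the proof.

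\textbf{Main obstacle.} The one genuinely substantive point --- and the reason the space carrying $\sigma^2$ is defined with the \emph{two} norms $\n{\cdot}_{W^{\infty,(m,0,0)}(G\times S\times I,L^1(S'))}$ and $\n{\cdot}_{W^{\infty,(m,0,0)}(G\times S'\times I,L^1(S))}$ --- is the $L^2(S')\to L^2(S)$ boundedness, uniform in $(x,E)$, of the frozen-coefficient angular operator: by the Schur/Young mechanism this requires the kernel to be controlled by its $L^1$-norm in \emph{both} angular slots, and control of $\int_{S'}|\sigma^2|\,d\omega'$ alone does not suffice. The remaining ingredients --- the weak Leibniz formula, differentiation under the integral, and the density/closed-subspace limiting argument --- are routine.
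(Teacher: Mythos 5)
Your proof is correct and follows essentially the same route as the paper's: the Leibniz expansion combined with a Cauchy--Schwarz/Schur estimate that uses the $L^1(S')$ and $L^1(S)$ norms of the kernel in both angular slots, followed by the support argument via Proposition \ref{pr:H^(m,0,0,Gamma_-):characterization:1} and the density/closedness limiting argument for general $\sigma^2$. The only cosmetic differences are that you organize the Schur test as a frozen-$(x,E)$ operator-norm bound $L^2(S')\to L^2(S)$ and use $\sqrt{ab}\leq \frac{1}{2}(a+b)$ to state the constant as a sum rather than as a product of square roots of the two $W^{\infty,(m,0,0)}$-norms.
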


\begin{proof}
\medskip
\noindent {\bf A.}
At first, let us verify that $K_r$ is a bounded operator $H^{(m,0,0)}(G\times S\times I)\to H^{(m,0,0)}(G\times S\times I)$.

Since
\[
\s W_0^{\infty,(m,0,0)}(G\times S'\times S\times I,\Gamma_-)\subset 
W^{\infty,(m,0,0)}(G\times S\times I,L^1(S'))\cap W^{\infty,(m,0,0)}(G\times S'\times I,L^1(S)),
\]
the assumption (\ref{nis-9}) implies that  for all $|\alpha|\leq m$
\bea\label{bound-b}
&
\int_{S'}|(\partial_x^\alpha\sigma^2)(x,\omega',\omega,E)|d\omega' \leq \n{\sigma^2}_{ W^{\infty,(m,0,0)}(G\times S\times I,L^1(S'))}\quad {\rm for\ a.e.}\ (x,\omega,E)\in G\times S\times I
\nonumber\\
&
\int_{S}|(\partial_x^\alpha\sigma^2)(x,\omega',\omega,E)|d\omega\leq \n{\sigma^2}_{ W^{\infty,(m,0,0)}(G\times S'\times I,L^1(S))}\quad {\rm for\ a.e.}\ (x,\omega',E)\in G\times S'\times I.
\eea
Furthermore, by the Leibniz's rule and Cauchy-Schwarz's inequality 
we have for $\psi\in C^{(\infty,0,0)}(\ol G\times S\times I)$
\bea\label{nis-10a}
&
|\partial_x^\alpha(K_r\psi)(x,\omega,E)|
=\big|\int_{S'}\partial_x^\alpha\big(\sigma^2(x,\omega',\omega,E)\psi(x,\omega',E)\big)d\omega'\big| \\
={}& \big|\int_{S'} \sum_{\beta\leq \alpha}{\alpha \choose\beta}(\partial_x^{\alpha-\beta}\sigma^2)(x,\omega',\omega,E)(\partial_x^\beta\psi)(x,\omega',E) d\omega'\big| \nonumber \\
\leq {}&
\sum_{\beta\leq \alpha}{\alpha \choose\beta}
\int_{S'}|(\partial_x^{\alpha-\beta}\sigma^2)(x,\omega',\omega,E)|^{1/2+1/2}|(\partial_x^\beta\psi)(x,\omega',E)| d\omega' 
\nonumber\\
\leq {}& 
\sum_{\beta\leq \alpha}{\alpha \choose\beta}
\Big(\int_{S'}|(\partial_x^{\alpha-\beta}\sigma^2)(x,\omega',\omega,E)|
d\omega' \Big)^{1/2}
\Big(\int_{S'}|(\partial_x^{\alpha-\beta}\sigma^2)(x,\omega',\omega,E)||(\partial_x^\beta\psi)(x,\omega',E)|^2
d\omega' \Big)^{1/2} \nonumber
\eea
for every $|\alpha|\leq m$ and for a.e. $(x,\omega,E)\in G\times S\times I$.
Hence using the inequality $\big(\sum_{i=1}^n a_i\big)^2\leq n\sum_{i=1}^n a_i^2$ holding for any $a_1,\dots,a_n\in\R,\ n\in\N$
and using \eqref{bound-b}, we find
\bea\label{nis-10}
&
\n{K_r\psi}_{H^{(m,0,0)}(G\times S\times I)}^2
=
\sum_{|\alpha|\leq m}\int_{G\times S\times I} |\partial_x^\alpha(K_r\psi)(x,\omega,E)|^2 dx d\omega dE
\nonumber\\
\leq {}&
C_m'\sum_{|\alpha|\leq m}\sum_{\beta\leq \alpha} {\alpha \choose\beta}^2
\int_{G\times S\times I} \Big(\int_{S'}|(\partial_x^{\alpha-\beta}\sigma^2)(x,\omega',\omega,E)|
d\omega' \Big)\nonumber\\
&
\cdot
\Big(\int_{S'}|(\partial_x^{\alpha-\beta}\sigma^2)(x,\omega',\omega,E)||(\partial_x^\beta\psi)(x,\omega',E)|^2
d\omega'\Big) dx d\omega dE
\nonumber\\
\leq {}&
C_m'\sum_{|\alpha|\leq m}\sum_{\beta\leq \alpha} {\alpha \choose\beta}^2
\n{\sigma^2}_{W^{\infty,(m,0,0)}(G\times S\times I,L^1(S'))}
\n{\sigma^2}_{W^{\infty,(m,0,0)}(G\times S'\times I,L^1(S))}\nonumber\\
&
\cdot \int_{G\times S'\times I} |(\partial_x^\beta\psi)(x,\omega',E)|^2 dx d\omega' dE
\nonumber\\
&
\leq 
C_m\n{\sigma^2}_{ W^{\infty,(m,0,0)}(G\times S\times I,L^1(S'))}
\n{\sigma^2}_{ W^{\infty,(m,0,0)}(G\times S'\times I,L^1(S))}
\n{\psi}_{H^{(m,0,0)}(G\times S\times I)}^2,
\eea
where the constants $0<C_m'<\infty$ and $0<C_m<\infty$ depend only on $m$ and where we noticed that
\[
&
\int_{G\times S\times I}
\int_{S'}|(\partial_x^{\alpha-\beta}\sigma^2)(x,\omega',\omega,E)||(\partial_x^\beta\psi)(x,\omega',E)|^2
d\omega'  dx d\omega dE \\
={}&
\Big(\int_{S}|(\partial_x^{\alpha-\beta}\sigma^2)(x,\omega',\omega,E)|d\omega\Big)
\Big(\int_{G\times S'\times I} |(\partial_x^\beta\psi)(x,\omega',E)|^2 dx d\omega' dE\Big).
\]
The estimate \eqref{nis-10} shows that $K_r:H^{(m,0,0)}(G\times S\times I)\to H^{(m,0,0)}(G\times S\times I)$ is bounded.

\medskip
\noindent {\bf B.} Next we will show that $K_r\psi\in H_0^{(m,0,0)}(G\times S\times I,\Gamma_-)$
for $\psi\in H^{(m,0,0)}(G\times S\times I)$ which proves the theorem.
Since $\sigma^2\in {\s W}_0^{\infty,(m,0,0)}(G\times S\times I,L^1(S'),\Gamma_-)$ there exists a sequence $\{\sigma^2_n\}\subset C_0^{(\infty,0,0)}(\ol G\times S\times I,L^1(S'),\Gamma_-)\cap W^{\infty,(m,0,0)}(G\times S'\times I,L^1(S))$ such that $\sigma^2_n\to\sigma^2$ simultaneously in
$W^{\infty,(m,0,0)}(G\times S\times I,L^1(S'))$ 
and in $W^{\infty,(m,0,0)}(G\times S'\times I,L^1(S))$.
Let for $n=1,2,\dots$
\[
(K_{r,n}\psi)(x,\omega,E):=\int_{S'}\sigma_n^2(x,\omega',\omega,E)\psi(x,\omega',E)d\omega',
\quad (x,\omega,E)\in G\times S\times I.
\]
Since $\sigma^2_n\in C_0^{(\infty,0,0)}(\ol G\times S\times I,L^1(S'),\Gamma_-)$ we find
that for any $\psi\in H^{(m,0,0)}(G\times S\times I)$
and all $n\in\N$,
\[
{\rm supp}(K_{r,n}\psi)\cap (\Gamma_-\cup\Gamma_0)=\emptyset
\]
which implies that $K_{r,n}\psi\in H_0^{(m,0,0)}(G\times S\times I,\Gamma_-)$
by Proposition \ref{pr:H^(m,0,0,Gamma_-):characterization:1}.
Due to the estimate \eqref{nis-10},
one has for any $\psi\in \psi\in H^{(m,0,0)}(G\times S\times I)$,
\bea\label{nis-11}
&
\n{K_{r,n}\psi-K_{r}\psi}_{H^{(m,0,0)}(G\times S\times I)}^2
\nonumber\\
\leq {}&
C_m\n{\sigma_n^2-\sigma^2}_{W^{\infty,(m,0,0)}(G\times S\times I,L^1(S'))}
\n{\sigma_n^2-\sigma^2}_{W^{\infty,(m,0,0)}(G\times S'\times I,L^1(S))}
\n{\psi}_{H^{(m,0,0)}(G\times S\times I)}^2 
\eea
and so $K_{r,n}\psi\to K_{r}\psi$ in $H^{(m,0,0)}(G\times S\times I)$ as $n\to\infty$.
Hence $K_{r}\psi\in H_0^{(m,0,0)}(G\times S\times I,\Gamma_-)$
for any $\psi\in H^{(m,0,0)}(G\times S\times I)$ as desired.
\end{proof}

Theorem \ref{el-k-b} implies that
$K_r$ yields a bounded operator $H_0^{(m,0,0)}(G\times S\times I,\Gamma_-)\to H_0^{(m,0,0)}(G\times S\times I,\Gamma_-)$ under the assumption \eqref{nis-9}.
Let $\n{K_r}_m$ be the corresponding norm of $K_r$.

Furthermore, let $Q_m:H_0^{(m,0,0)}(G\times S\times I,\Gamma_-)\to H_0^{(m,0,0)}(G\times S\times I,\Gamma_-)$ be
\[
Q_m:=P_m-K_r=\omega\cdot\nabla_x\psi+\Sigma\psi-K_r\psi,\quad D(Q_m):=D(P_m)
\]
and notice that the smallest closed extension $\widetilde Q_m$ of $Q_m$ in $H_0^{(m,0,0)}(G\times S\times I,\Gamma_-)$ is
\[
\widetilde Q_m=\widetilde {P_m-K_r}=\widetilde P_m-K_r,\quad D(\widetilde Q_m)=D(\widetilde P_m).
\]

We have

\begin{theorem}\label{nis-th2}
Suppose that 
\[
&
\Sigma\in \s W^{\infty,(m,0,0)}(G\times S\times I)
\nonumber\\
&
\sigma^2\in \s  W_0^{\infty,(m,0,0)}(G\times S\times I,L^1(S'),\Gamma_-).
\]
Let $C'':= c(m)\n{\Sigma}_{W^{\infty,(m,0,0)}(G\times S\times I)}+\n{K_r}_m$, 
where $c(m)$ is as in Lemma \ref{nis-le1}.
Then for every $C>C''$, it holds
\be\label{nis-13}
R(\widetilde Q_m+C\s I)=H_0^{(m,0,0)}(G\times S\times I,\Gamma_-)
\ee
and so for any $f\in H_0^{(m,0,0)}(G\times S\times I,\Gamma_-)$ there exists a unique strong solution
$\psi\in H_0^{(m,0,0)}(G\times S\times I,\Gamma_-)$ of the problem
\[
\omega\cdot\nabla_x\psi+\Sigma\psi-K_r\psi+C\psi=f,\quad \psi_{|\Gamma_-}=0.
\]
\end{theorem}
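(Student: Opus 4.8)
The plan is to reduce Theorem \ref{nis-th2} to the already-established Theorem \ref{nis-th1} by treating the collision term $-K_r$ as a bounded perturbation of the accretive operator $\widetilde{P}_m + C'\s{I}$, and then invoke a standard perturbation lemma for $m$-accretive operators (or equivalently, a Neumann-series / fixed-point argument). The key structural facts available are: (a) by Lemma \ref{nis-le1}, $\widetilde{P}_m + C'\s{I}$ is accretive on $H_0^{(m,0,0)}(G\times S\times I,\Gamma_-)$; (b) by Theorem \ref{nis-th1}, for every $C > C'$ the operator $\widetilde{P}_m + C\s{I}$ is surjective with bounded inverse of norm at most $1/(C-C')$; and (c) by Theorem \ref{el-k-b}, $K_r$ is a \emph{bounded} operator on $H_0^{(m,0,0)}(G\times S\times I,\Gamma_-)$ with norm $\n{K_r}_m$, and $D(\widetilde{Q}_m) = D(\widetilde{P}_m)$ with $\widetilde{Q}_m = \widetilde{P}_m - K_r$.

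First I would fix $C > C'' = C' + \n{K_r}_m$ and write $\widetilde{Q}_m + C\s{I} = (\widetilde{P}_m + C\s{I}) - K_r$. Since $C > C'$, Theorem \ref{nis-th1} gives that $\widetilde{P}_m + C\s{I}$ is a bijection from $D(\widetilde{P}_m)$ onto $H_0^{(m,0,0)}(G\times S\times I,\Gamma_-)$ with $\n{(\widetilde{P}_m + C\s{I})^{-1}} \leq (C-C')^{-1}$ (the bound comes from the accretivity estimate \eqref{nis-1-c}). Then I factor
\[
\widetilde{Q}_m + C\s{I} = \big(\s{I} - K_r(\widetilde{P}_m + C\s{I})^{-1}\big)(\widetilde{P}_m + C\s{I}),
\]
and observe that the operator norm of $K_r(\widetilde{P}_m + C\s{I})^{-1}$ on $H_0^{(m,0,0)}(G\times S\times I,\Gamma_-)$ is at most $\n{K_r}_m/(C-C') < \n{K_r}_m/(C''-C') = \n{K_r}_m/\n{K_r}_m = 1$ precisely because $C > C'' = C' + \n{K_r}_m$. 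Hence $\s{I} - K_r(\widetilde{P}_m + C\s{I})^{-1}$ is invertible by the Neumann series $\sum_{k=0}^\infty \big(K_r(\widetilde{P}_m + C\s{I})^{-1}\big)^k$, which converges in operator norm. Therefore $\widetilde{Q}_m + C\s{I}$ is a bijection onto $H_0^{(m,0,0)}(G\times S\times I,\Gamma_-)$, giving \eqref{nis-13}; the unique $\psi \in D(\widetilde{Q}_m) = D(\widetilde{P}_m) \subset H_0^{(m,0,0)}(G\times S\times I,\Gamma_-)$ solving $(\widetilde{Q}_m + C\s{I})\psi = f$ is the asserted strong solution, and it satisfies $\psi_{|\Gamma_-} = 0$ by the trace property of elements of $H_0^{(m,0,0)}(G\times S\times I,\Gamma_-)$ noted after Proposition \ref{pr:H^(m,0,0,Gamma_-):characterization:1}. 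Uniqueness can alternatively be read off directly: if $(\widetilde{Q}_m + C\s{I})\psi = 0$ then $\n{\psi} \leq \n{(\widetilde{P}_m + C\s{I})^{-1}}\,\n{K_r\psi} \leq \tfrac{\n{K_r}_m}{C-C'}\n{\psi}$, forcing $\psi = 0$ since the factor is $< 1$.

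I expect the only genuine subtlety — rather than an obstacle — to be bookkeeping about domains and closures: one must be careful that $\widetilde{Q}_m$, defined as the closure of $Q_m = P_m - K_r$, genuinely equals $\widetilde{P}_m - K_r$ with the same domain $D(\widetilde{P}_m)$. This is exactly the statement made in the excerpt just before the theorem ($\widetilde{Q}_m = \widetilde{P}_m - K_r$, $D(\widetilde{Q}_m) = D(\widetilde{P}_m)$), and it holds because $K_r$ is \emph{globally} bounded on $H_0^{(m,0,0)}(G\times S\times I,\Gamma_-)$ (so adding it does not change which sequences $\{\psi_n\}\subset D(P_m)$ make $Q_m\psi_n$ converge). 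Granting that, the argument is purely a bounded-perturbation-of-an-invertible-operator computation, and the threshold $C'' = C' + \n{K_r}_m$ is exactly what makes the Neumann series converge. No compactness, regularity, or geometric input beyond what is already packaged in Theorems \ref{nis-th1} and \ref{el-k-b} is needed.
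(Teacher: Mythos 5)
Your proof is correct, but it takes a genuinely different route from the paper. The paper treats $\widetilde Q_m+C\s I$ as a sum of the $m$-accretive operator $\widetilde P_m+C_1\s I$ (which is $m$-accretive by Lemma \ref{nis-le1} and Theorem \ref{nis-th1}) and the \emph{bounded accretive} operator $\lambda_1\s I-K_r$ with $\lambda_1>\n{K_r}_m$, then invokes the abstract perturbation theorem that such a sum is again $m$-accretive, and finally reads off surjectivity of $\widetilde Q_m+C\s I$ from the characterization $R(\lambda\s I+A)=X$ of $m$-accretivity. You instead factor $\widetilde Q_m+C\s I=\bigl(\s I-K_r(\widetilde P_m+C\s I)^{-1}\bigr)(\widetilde P_m+C\s I)$ and invert the first factor by a Neumann series, using the resolvent bound $\n{(\widetilde P_m+C\s I)^{-1}}\leq (C-C')^{-1}$ that follows from \eqref{nis-1-c}. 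Both arguments use exactly the same inputs (Lemma \ref{nis-le1}, Theorem \ref{nis-th1}, Theorem \ref{el-k-b}, and the identity $\widetilde Q_m=\widetilde P_m-K_r$ with $D(\widetilde Q_m)=D(\widetilde P_m)$), and both isolate the same threshold $C''=C'+\n{K_r}_m$. Your version is more elementary and self-contained — it makes the role of the threshold completely transparent as a contraction condition and avoids citing semigroup-theoretic perturbation results — while the paper's version is shorter on the page because the work is delegated to the cited $m$-accretivity machinery. One cosmetic remark: your chain $\n{K_r}_m/(C-C')<\n{K_r}_m/(C''-C')=1$ implicitly assumes $\n{K_r}_m>0$; when $K_r=0$ the bound is trivially $0<1$, so nothing is lost, but it is worth phrasing the inequality as $\n{K_r}_m<C-C'$ directly to cover that degenerate case.
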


\begin{proof} 
Recall that a linear, densely defined, closed and accretive operator $A:X\to X$ is $m$-accretive if and only if
\be\label{m-d}
R(\lambda\s I+A)=X
\ee
for some (and hence all) $\lambda >0$ (see \cite[section 1.4]{pazy83}). 

Recall that $C'=c(m)\n{\Sigma}_{W^{\infty,(m,0,0)}(G\times S\times I)}$
in Lemma \ref{nis-le1}.
Let $C_1>C'$.
Due to Lemma \ref{nis-le1} and Theorem \ref{nis-th1} the operator $\widetilde P_m+C_1\s I:H_0^{(m,0,0)}(G\times S\times I,\Gamma_-)\to H_0^{(m,0,0)}(G\times S\times I,\Gamma_-)$ is $m$-accretive. 
Moreover, by Theorem \ref{el-k-b} the operator $K_r:H_0^{(m,0,0)}(G\times S\times I,\Gamma_-)\to H_0^{(m,0,0)}(G\times S\times I,\Gamma_-)$ is bounded with the norm $\n{K_r}_m$.  
Since
\[
\la -K_r\psi,\psi\ra_{H_0^{(m,0,0)}(G\times S\times I,\Gamma_-)}\geq -\n{K_r}_m\n{\psi}_{H_0^{(m,0,0)}(G\times S\times I,\Gamma_-)}^2
\]
we find that for $\lambda_1>\n{K_r}_m$ the operator $\lambda_1\s I-K_r:H_0^{(m,0,0)}(G\times S\times I,\Gamma_-)\to H_0^{(m,0,0)}(G\times S\times I,\Gamma_-)$ is a bounded accretive operator.  
Hence the operator
\[
(\widetilde P_m+C_1\s I)+(\lambda_1\s I-K_r)=\widetilde Q_m+(C_1+\lambda_1)\s I
\]
is $m$-accretive (cf. \cite[Chapter III]{engelnagel}, or \cite[section 5.1]{tervo17-up}).

Let $C>C''=C'+\n{K_r}_m$.
We may then choose some $C_1>C'$ and $\lambda_1>\n{K_r}_m$ such that
$\mu:=C-(C_1+\lambda_1)>0$.
Since $\widetilde Q_m+(C_1+\lambda_1)\s I$ is $m$-accretive by the above,
it follows from \eqref{m-d} that
\[
R(\widetilde Q_m+C\s I)
=R\big((\widetilde Q_m+(C_1+\lambda_1)\s I)+\mu\s I\big)
=H_0^{(m,0,0)}(G\times S\times I,\Gamma_-).
\]

The solution is unique since with $\epsilon:=C-C''>0$,
it holds
\[
\la (\widetilde Q_m+C\s I)\psi,\psi\ra_{H_0^{(m,0,0)}(G\times S\times I,\Gamma_-)}\geq \epsilon \n{\psi}_{H_0^{(m,0,0)}(G\times S\times I,\Gamma_-)}^2
\]
for every $\psi\in D(\widetilde Q_m)$.
This completes the proof.
\end{proof}

\begin{remark}\label{nis-co1}
Suppose that 
\[
&
\Sigma\in \s W^{\infty,(m,0,0)}(G\times S\times I) 
\nonumber\\
&
\sigma^2\in  \s W_0^{\infty,(m,0,0)}(G\times S\times I,L^1(S'),\Gamma_-).
\]
Let $C''= c(m)\n{\Sigma}_{W^{\infty,(m,0,0)}(G\times S\times I)}+\n{K_r}_m$ and let $C>C''$. 
Assume that $f\in H_0^{(m,0,0)}(G\times S\times I,\Gamma_-)$ and
$g\in T^2(\Gamma_-)$ are such that
\be\label{cc-a}
f-(\Sigma L_-g-K_r(L_-g)+C L_-g)\in H_0^{(m,0,0)}(G\times S\times I,\Gamma_-),
\ee
where $L_-g$ is the lift of $g$ given in Remark \ref{lift} above.
Then there exists a unique solution
\[
\psi\in H_0^{(m,0,0)}(G\times S\times I,\Gamma_-)+L_-g
\]
of the problem
\be\label{nis-14}
\omega\cdot\nabla_x\psi+\Sigma\psi-K_r\psi+C\psi=f,\quad \psi_{|\Gamma_-}=g.
\ee

\begin{proof}
Applying the change of unknown $u:=\psi-L_-g$ the problem becomes
(recall that\linebreak
${\omega\cdot \nabla_x(L_-g)=0}$)
\be\label{nis-14-b}
\omega\cdot\nabla_xu+\Sigma u-K_ru+Cu=f-(\Sigma L_-g-K_r(L_-g)+C L_-g)=:F,
\quad u_{|\Gamma_-}=0,
\ee
where $F\in H_0^{(m,0,0)}(G\times S\times I,\Gamma_-)$ by \eqref{cc-a}.
By Theorem \ref{nis-th2} there exists a unique solution  $u\in  H_0^{(m,0,0)}(G\times S\times I,\Gamma_-)$ of \eqref{nis-14-b} which implies the assertion since $\psi=u+L_-g$.
\end{proof}

We remark that even for smooth $g$ the lift $L_-g$ may lie only in $H^{(s,0,0)}(G\times S\times I)$ where $s<{3\over 2}$ and so  the regularity of  solutions $\psi$ of the (non-homogeneous) problem (\ref{nis-14}) is limited.
\end{remark}

\subsection{A Higher-order Regularity Result for CSDA-equation}\label{evcsd}

Consider the (continuous slowing down) transport operator of the form
\[
T\psi:=
a{\p \psi{E}}
+\omega\cdot\nabla_x\psi+\Sigma\psi-K_{r}\psi.
\] 

We proceed analogously to \cite[sections 3.3., 3.4.]{tervo17} or \cite[section 5.4.]{tervo18-up}.
Recall the following result from the theory of evolution equations.

\begin{theorem}\label{evoth}
Suppose that $X$ is a Hilbert space and that for any fixed $t\in [0,T]$
the operator $A(t):X\to X$ is linear and closed
with domain $D(A(t))\subset X$. In addition, we assume that the following conditions hold:
\begin{itemize}
\item[(i)] The domain $D:=D(A(t))$ is independent of $t$ and is a dense subspace of $X$.

\item[(ii)] The operator $A(t)$ is $m$-dissipative for any fixed $t\in [0,T]$ 

\item[(iii)] For every $u\in D$ the mapping $f_u:[0,T]\to X$ defined by $f_u(t):=A(t)u$
is in $C^1([0,T],X)$.

\item[(iv)] $f\in C^1([0,T],X)$ and $u_0\in D$.
\end{itemize}

Then the (evolution) equation
\be\label{ecsd5}
{\p {u}t}-A(t)u=f,\quad u(0)=u_0,
\ee
has a unique solution $u\in C([0,T],D)\cap C^1([0,T],X)$.  

\end{theorem}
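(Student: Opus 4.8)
The plan is to recognize the statement as the classical existence-and-uniqueness theorem for linear non-autonomous Cauchy problems with a \emph{stable} family of generators having a \emph{time-independent} domain — i.e. Kato's theorem, in the form presented in \cite[Sections 5.2--5.3]{pazy83} — and to reduce the proof to checking that hypotheses (i)--(iv) are exactly those required there, reading off the regularity of $u$ from the variation-of-constants formula and getting uniqueness directly from dissipativity.

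First I would note that, by the Lumer--Phillips theorem, hypothesis (ii) together with the density in (i) says precisely that each $A(t)$ generates a $C_0$-semigroup of contractions on $X$; equivalently, for every $\lambda>0$ the resolvent $(\lambda\s I-A(t))^{-1}$ exists and $\n{(\lambda\s I-A(t))^{-1}}\leq 1/\lambda$. Consequently the family $\{A(t)\}_{t\in[0,T]}$ is \emph{stable} with stability constants $M=1$, $\omega=0$: for any increasing sequence $0\leq t_1\leq\cdots\leq t_k\leq T$ and $\lambda>0$, $\n{\prod_{j=1}^k(\lambda\s I-A(t_j))^{-1}}\leq\lambda^{-k}$. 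Combined with (i) (common dense domain $D$) and (iii) (for each $w\in D$ the map $t\mapsto A(t)w$ is $C^1$, hence in particular strongly continuous and, via the uniform boundedness principle applied to the family $\{(A(t)-\lambda_0\s I)(A(0)-\lambda_0\s I)^{-1}\}_{t\in[0,T]}$ of bounded operators for fixed $\lambda_0>0$, uniformly bounded on $[0,T]$ in operator norm along with its derivative), these are exactly the hypotheses of Kato's construction, which produces a unique \emph{evolution system} $\{U(t,s):0\leq s\leq t\leq T\}$: a strongly continuous two-parameter family of contractions with $U(t,t)=\s I$, $U(t,r)U(r,s)=U(t,s)$, $U(t,s)D\subset D$, and such that for $w\in D$ the maps $t\mapsto U(t,s)w$ and $s\mapsto U(t,s)w$ are $C^1$ with $\pa t U(t,s)w=A(t)U(t,s)w$ and $\pa s U(t,s)w=-U(t,s)A(s)w$. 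I would simply cite this; reproving it via the Yosida approximations $A_n(t)=nA(t)(n\s I-A(t))^{-1}$, the approximating evolution systems $U_n(t,s)$, and their convergence is the technical heart but is entirely standard.

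Next I would set $u(t):=U(t,0)u_0+\int_0^tU(t,s)f(s)\,ds$ and verify it is the desired solution of \eqref{ecsd5}. Since $u_0\in D$ by (iv), the homogeneous part $t\mapsto U(t,0)u_0$ lies in $C([0,T],D)\cap C^1([0,T],X)$ and solves $u'=A(t)u$, $u(0)=u_0$. For the inhomogeneous part $w(t):=\int_0^tU(t,s)f(s)\,ds$, the hypothesis $f\in C^1([0,T],X)$ from (iv) is exactly what guarantees $w\in C([0,T],D)\cap C^1([0,T],X)$ with $w'(t)=A(t)w(t)+f(t)$ and $w(0)=0$; the standard device is to write $f(s)=f(t)-\int_s^tf'(r)\,dr$ under the integral and integrate by parts in $s$ using $\pa s U(t,s)w=-U(t,s)A(s)w$, after shifting $A(s)$ by a large constant so as to make it boundedly invertible (legitimate since every $A(t)$ is $m$-dissipative), so that the resulting terms land in $D$. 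Again I would invoke the corresponding corollary in \cite[Section 5.3]{pazy83}. Adding the two parts gives $u\in C([0,T],D)\cap C^1([0,T],X)$ solving \eqref{ecsd5}.

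Finally, uniqueness is immediate from dissipativity and needs no evolution system: if $u_1,u_2$ are two solutions and $v:=u_1-u_2$, then $v\in C([0,T],D)\cap C^1([0,T],X)$, $v(0)=0$, $v'(t)=A(t)v(t)$, so $\dif t\n{v(t)}^2=2\la A(t)v(t),v(t)\ra\leq 0$ for all $t$ (recall $X$ is real), whence $\n{v(t)}^2\leq\n{v(0)}^2=0$. I expect the only genuine obstacle to be the passage from the pointwise $C^1$-regularity "$t\mapsto A(t)w$ is $C^1$ for each $w\in D$" to the uniform-in-$t$ operator-norm estimates needed both for Kato's construction of $U(t,s)$ and for the $D$-valued continuity of $w$; but this is exactly the content handled inside the cited theorem, so in the write-up the proof reduces to verifying that (i)--(iv) match its hypotheses and to the short dissipativity argument for uniqueness above.
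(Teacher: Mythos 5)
Your proposal is correct and follows essentially the same route as the paper, which simply cites the standard references (Tanabe, Theorem 4.5.3; Pazy, Chapter 5; Engel--Nagel) for exactly this Kato-type result on hyperbolic evolution equations with a stable family of $m$-dissipative generators having a common dense domain. Your additional detail --- Lumer--Phillips to get stability constants $M=1$, $\omega=0$, the uniform boundedness argument upgrading hypothesis (iii), the variation-of-constants formula, and the direct dissipativity argument for uniqueness --- is a faithful account of what those references prove, so there is nothing to object to.
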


\begin{proof}
See \cite[Theorem 4.5.3, pp. 89-106]{tanabe}, \cite[pp. 126-182]{pazy83} or \cite[pp. 477-496]{engelnagel}.
\end{proof}

Consider the initial inflow boundary value problem
\be\label{pr-f}
a{\p \psi{E}}+\omega\cdot\nabla_x\psi+\Sigma\psi-K_{r}\psi=f,
\quad \psi_{|\Gamma_-}=0,
\quad \psi(\cdot,\cdot,E_m)=0.
\ee 
Again we assume that the restricted collision operator is of the form $K_r=K_r^2$ (see \eqref{def:K_r^2:1}) i.e.
\be\label{ecsd1}
(K_r\psi)(x,\omega,E)=\int_{S'}\sigma^2(x,\omega',\omega,E)\psi(x,\omega',E)d\omega'.
\ee
Then $K_r\psi$ can be written as $K_r(E)\psi(E)$,
where $\psi(E)(x,\omega):=\psi(x,\omega,E)$ and where for any fixed $E\in I$
the linear operator $K_r(E)$ is defined by 
\be\label{ke}
(K_r(E)\varphi)(x,\omega):=\int_{S'} {\sigma^2}(x,\omega',\omega,E)\varphi(x,\omega') d\omega',
\quad \varphi\in L^2(G\times S).
\ee
For simplicity, we assume that $E_0=0$. Then the energy-interval $I=[0,E_m]$ remains unchanged under the change of variables $E\rightarrow E_m-E$.

We make the following change of variables and of the unknown function
\be\label{ecsd2} 
\tilde\psi(x,\omega,E):={}&\psi(x,\omega,E_m-E), \nonumber\\
\phi:={}&e^{CE}\tilde\psi,
\ee
and denote 
\bea
\hat a(x,E)={}& a(x,E_m-E),\ \nonumber\\
\widehat \Sigma(x,\omega,E)={}&\Sigma(x,\omega,E_m-E)\nonumber\\
\hat \sigma^2(x,\omega',\omega,E)={}&\sigma^2(x,\omega',\omega,E_m-E)\nonumber\\
\hat f(x,\omega,E)={}&f(x,\omega,E_m-E)\nonumber\\
(\widehat K_r\phi)(x,\omega,E)={}&\int_S \hat \sigma^2(x,\omega',\omega,E)\phi(x,\omega',E)d\omega'
\eea
with $\phi\in L^2(G\times S\times I)$ in the definition of $\widehat {K_r}$.
Making these changes, we find that the problem \eqref{pr-f}
is equivalent to
$\phi$ satisfying the equation
\bea\label{se1a}
&{\p {\phi}E}-{1\over{\hat a}}\omega\cdot\nabla_x\phi-C\phi
-{1\over{\hat a}}\widehat\Sigma\phi
+{1\over{\hat a}}\widehat K_r\phi
=-{1\over{\hat a}}e^{CE}\hat f,
\eea
on $G\times S\times I$
along with satisfying the following inflow boundary and initial value conditions
\begin{align}
\phi_{|\Gamma_-}=0, \label{se2a} \\
\phi(\cdot,\cdot,0)={}&0 \hspace{1.5cm} \textrm{(on $G\times S$)}. \label{se3a}
\end{align}

The spaces $C_0^{(m,0)}(\ol G\times S,\Gamma_-')$, 
$\s W^{\infty,(m,0)}(G\times S)$ and $H_0^{(m,0)}(G\times S,\Gamma_-')$
used below are 
defined similarly as the spaces
$C_0^{(m,0,0)}(G\times S\times I,\Gamma_-)$, $\s  W^{\infty,(m,0,0)}(G\times S\times I)$ and $H_0^{(m,0,0)}(G\times S\times I,\Gamma_-)$ above.
Denote
\[
\widehat\Sigma(E):=\widehat\Sigma(\cdot,\cdot,E),\quad \hat a(E):=\hat a(\cdot,E).
\]
Assume that
\bea
&
\Sigma\in  \s W^{\infty,(m,0,0)}( G\times S\times I)\label{csda-6}\\
&
a\in  \s W^{\infty,(m,0)}( G\times I)\label{csda-7}\\
&
-a(x,E)\geq \kappa>0\quad \forall (x,E)\in G\times I\label{csda-8}
\eea
for some constant $\kappa>0$.
Then we find that for
$\varphi\in H_0^{(m,0)}(G\times S,\Gamma_-')$
\be\label{csda-4}
\n{{1\over {\hat a(E)}}\widehat\Sigma(E)\varphi}_{H_0^{(m,0)}(G\times S,\Gamma_-')}\leq c(m)\n{{1\over {\hat a(E)}}\widehat \Sigma(E)}_{W^{\infty,(m,0)}( G\times S,\Gamma_-')}\n{\varphi}_{H_0^{(m,0)}(G\times S,\Gamma_-')},
\ee
where $c(m)$ is as in \eqref{li} and where
\be\label{csda-5}
\n{{1\over {\hat a(E)}}\widehat \Sigma(E)}_{W^{\infty,(m,0)}( G\times S)}\leq C_1\n{a}_{W^{\infty,(m,0)}( G\times I)}\n{\Sigma}_{W^{\infty,(m,0)}( G\times S\times I)}
\ee
for some constant $C_1$ which does not depend on $E$.
Hence the multiplication operators ${1\over {\hat a(E)}}\widehat \Sigma(E):H_0^{(m,0)}(G\times S,\Gamma_-')\to H_0^{(m,0)}(G\times S,\Gamma_-')$ are bounded for every $E\in I$
(and in fact they are uniformly bounded over $E\in I$).

The operators $\widehat K_r(E)$ satisfy the following:

\begin{lemma}\label{csda-th1}
Suppose that $a$ obeys (\ref{csda-7}) and (\ref{csda-8}) and that $\sigma^2:G\times S'\times S\times I\to\R$ is a  measurable function such that 
\be\label{cdsa-1}
\sigma^2\in \s W_0^{\infty,(m,0,0)}(G\times S\times I,L^1(S'),\Gamma_-).
\ee
Then ${1\over {\hat a(E)}}\widehat K_r(E): H_0^{(m,0)}(G\times S,\Gamma_-')\to H_0^{(m,0)}(G\times S,\Gamma_-')$ are bounded operators
for $E\in I$ and their norms $\n{{1\over {\hat a(E)}}\widehat K_r(E)}_m$ obey
\be\label{csda-2}
\n{{1\over {\hat a(E)}}\widehat K_r(E)}_m\leq
C_2\n{a}_{W^{\infty,(m,0)}( G\times I)}\n{\sigma^2}_{ W^{\infty,(m,0,0)}(G\times S\times I,L^1(S'))}
\n{\sigma^2}_{ W^{\infty,(m,0,0)}(G\times S'\times I,L^1(S))},
\ee
where $C_2$ does not depend on $E$.
\end{lemma}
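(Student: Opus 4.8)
The plan is to follow, step for step, the proof of Theorem~\ref{el-k-b}, but carried out on the energy-frozen space $H^{(m,0)}(G\times S)$ for a fixed $E\in I$, and then to compose with the scalar multiplication $\varphi\mapsto\tfrac{1}{\hat a(E)}\varphi$. Concretely, I would establish three facts. \textbf{(A)} The operators $\widehat K_r(E)\colon H^{(m,0)}(G\times S)\to H^{(m,0)}(G\times S)$ are bounded \emph{uniformly in $E\in I$}, with the constant that appears in \eqref{nis-10}. \textbf{(B)} In fact $\widehat K_r(E)\varphi\in H_0^{(m,0)}(G\times S,\Gamma'_-)$ for every $\varphi\in H^{(m,0)}(G\times S)$. \textbf{(C)} Multiplication by $\tfrac{1}{\hat a(E)}$ is a bounded endomorphism of $H_0^{(m,0)}(G\times S,\Gamma'_-)$ whose norm is controlled by $\kappa$, $m$ and $\n{a}_{W^{\infty,(m,0)}(G\times I)}$, again uniformly in $E$. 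Since the range of $\widehat K_r(E)$ already lies in $H_0^{(m,0)}(G\times S,\Gamma'_-)$ by (B), composing (A)--(C) and collecting constants will give the bounded operator of the statement together with \eqref{csda-2}.

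For (A), fix $E$ and take $\varphi$ smooth (the general case follows by density of $C^{(\infty,0)}(\ol G\times S)$ in $H^{(m,0)}(G\times S)$). I would differentiate under the integral, apply Leibniz's rule to $\partial_x^\alpha\big(\hat\sigma^2(\cdot,\omega',\cdot,E)\,\varphi(\cdot,\omega')\big)$ for $|\alpha|\le m$, and then run verbatim the Cauchy--Schwarz splitting $|\partial_x^{\alpha-\beta}\hat\sigma^2|=|\partial_x^{\alpha-\beta}\hat\sigma^2|^{1/2}|\partial_x^{\alpha-\beta}\hat\sigma^2|^{1/2}$ and the elementary inequality $(\sum_i a_i)^2\le n\sum_i a_i^2$ used in \eqref{nis-10a}--\eqref{nis-10}. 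The only quantitative point to add is that the $L^1(S')$ row sums $\int_{S'}|\partial_x^{\alpha-\beta}\hat\sigma^2(x,\omega',\omega,E)|\,d\omega'$ and the $L^1(S)$ column sums $\int_{S}|\partial_x^{\alpha-\beta}\hat\sigma^2(x,\omega',\omega,E)|\,d\omega$ are bounded, for a.e.\ $x$, by $\n{\sigma^2}_{W^{\infty,(m,0,0)}(G\times S\times I,L^1(S'))}$ and $\n{\sigma^2}_{W^{\infty,(m,0,0)}(G\times S'\times I,L^1(S))}$ respectively, \emph{independently of $E$} --- these are essential suprema over all of $G\times S\times I$, and the energy reflection $E\mapsto E_m-E$ defining $\hat\sigma^2$ leaves them unchanged. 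Integrating over $G\times S$ then reproduces the estimate of \eqref{nis-10} with $I$ removed:
\[
\n{\widehat K_r(E)\varphi}_{H^{(m,0)}(G\times S)}^2\le C_m\,\n{\sigma^2}_{W^{\infty,(m,0,0)}(G\times S\times I,L^1(S'))}\,\n{\sigma^2}_{W^{\infty,(m,0,0)}(G\times S'\times I,L^1(S))}\,\n{\varphi}_{H^{(m,0)}(G\times S)}^2,
\]
with $C_m$ depending only on $m$.

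For (B), I would reuse Part~B of the proof of Theorem~\ref{el-k-b}: choose $\sigma^2_n\in C_0^{(\infty,0,0)}(\ol G\times S\times I,L^1(S'),\Gamma_-)\cap W^{\infty,(m,0,0)}(G\times S'\times I,L^1(S))$ with $\sigma^2_n\to\sigma^2$ simultaneously in both relevant $W^{\infty,(m,0,0)}$-norms, define $\widehat K_{r,n}(E)$ as in \eqref{ke} with the reflected kernel $\hat\sigma^2_n$, and note that for each fixed $E$ the function $\widehat K_{r,n}(E)\varphi$ vanishes on a neighbourhood in $\ol G\times S$ of $\Gamma'_-\cup\Gamma'_0$, because ${\rm supp}(\hat\sigma^2_n)$ is compact and disjoint from $\Gamma_-\cup\Gamma_0$ (a set independent of $E$, so the reflection does not spoil the separation). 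Hence ${\rm supp}(\widehat K_{r,n}(E)\varphi)\cap(\Gamma'_-\cup\Gamma'_0)=\emptyset$, and by the $G\times S$-analogue of Proposition~\ref{pr:H^(m,0,0,Gamma_-):characterization:1} we get $\widehat K_{r,n}(E)\varphi\in H_0^{(m,0)}(G\times S,\Gamma'_-)$. Applying the estimate of (A) to the difference kernels $\hat\sigma^2_n-\hat\sigma^2$, whose two relevant norms equal those of $\sigma^2_n-\sigma^2$ after the energy reflection, yields $\widehat K_{r,n}(E)\to\widehat K_r(E)$ in operator norm, uniformly in $E$; since $H_0^{(m,0)}(G\times S,\Gamma'_-)$ is closed, $\widehat K_r(E)\varphi$ lies in it.

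Finally for (C): by \eqref{csda-7} the restriction $\hat a(E)=a(\cdot,E_m-E)$ belongs, for each $E$, to $\s W^{\infty,m}(G)$ with norm $\le\n{a}_{W^{\infty,(m,0)}(G\times I)}$, and by \eqref{csda-8} it satisfies $|\hat a(E)|\ge\kappa>0$ on $G$; approximating $a$ by smooth functions that are bounded below by $\kappa/2$ and inverting, the quotient rule gives $\tfrac{1}{\hat a(E)}\in\s W^{\infty,(m,0)}(G\times S)$ (as an $\omega$-independent function) with $\n{\tfrac{1}{\hat a(E)}}_{W^{\infty,(m,0)}(G\times S)}$ bounded by a constant depending only on $\kappa$, $m$ and $\n{a}_{W^{\infty,(m,0)}(G\times I)}$, uniformly in $E$ --- this is the mechanism behind \eqref{csda-5}. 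The multiplication estimate \eqref{li}, read over $G\times S$ as in \eqref{csda-4}, then shows that $\varphi\mapsto\tfrac{1}{\hat a(E)}\varphi$ maps $H_0^{(m,0)}(G\times S,\Gamma'_-)$ into itself with norm $\le c(m)\n{\tfrac{1}{\hat a(E)}}_{W^{\infty,(m,0)}(G\times S)}$, $c(m)$ as in Lemma~\ref{nis-le1}. Composing with (A)--(B) and gathering all constants into a single $C_2=C_2(\kappa,m,G)$ independent of $E$ produces \eqref{csda-2}. The bulk of this is bookkeeping on top of Theorem~\ref{el-k-b} and the multiplication lemma behind \eqref{li}; the two points requiring care are (i) the uniformity in $E$ --- which works precisely because every norm on the right-hand sides is taken over all of $G\times S\times I$ (resp.\ $G\times I$), so freezing $E$ only makes things smaller --- and (ii), the genuine obstacle, that it is $\tfrac{1}{\hat a(E)}$ rather than $\hat a(E)$ itself that must be placed into the completion space $\s W^{\infty,(m,0)}$ with a quantitative norm bound, and this is the one step that truly uses the strict lower bound \eqref{csda-8} and is not a transcription of an earlier argument.
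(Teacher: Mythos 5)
Your proposal is correct and follows essentially the same route as the paper, whose proof of this lemma consists of the single remark that the bound \eqref{csda-2} ``is derived in the same way as the bounds \eqref{nis-10}, \eqref{csda-4} and \eqref{csda-5}''; your steps (A)--(C) are precisely the details being omitted there, namely the energy-frozen version of the estimate \eqref{nis-10} and of Part B of Theorem \ref{el-k-b}, composed with the multiplication bound for $1/\hat a(E)$ coming from \eqref{csda-8} and the mechanism behind \eqref{csda-4}--\eqref{csda-5}. Your closing observation that the lower bound \eqref{csda-8} is the one genuinely new ingredient is also consistent with how the paper uses it.
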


\begin{proof}
The bound \eqref{csda-2} is derived in the same way as the bounds \eqref{nis-10}, \eqref{csda-4} and \eqref{csda-5}.
We omit further details.
\end{proof}

Let
\[  
Q_C(E)\varphi:={1\over {\hat a(E)}}\omega\cdot\nabla_x\varphi+{1\over {\hat a(E)}}\widehat \Sigma(E)\varphi+C\varphi-{1\over {\hat a(E)}}\widehat K_{r}(E)\varphi. 
\]
Under the above assumptions on $\Sigma$ and $\sigma^2$ we  define (for a fixed $E\in I$) an unbounded operator $Q_{C,m}(E): H_0^{(m,0)}(G\times S,\Gamma_-')\to  H_0^{(m,0)}(G\times S,\Gamma_-')$ by
\[
\begin{cases}
& D(Q_{C,m}(E)):=\cap_{k=1}^\infty C_0^{(k,0)}(G\times S,\Gamma_-') \\
&
Q_{C,m}(E)\varphi=Q_C(E)\varphi.
\end{cases}
\] 
Let $\widetilde Q_{C,m}(E):H_0^{(m,0)}(G\times S,\Gamma_-')\to H_0^{(m,0)}(G\times S,\Gamma_-')$ be the smallest closed extension (the closure) of $Q_{C,m}(E)$ that is,
\[
\begin{cases}
D(\widetilde Q_{C,m}(E)) &= \{\varphi\in H_0^{(m,0)}(G\times S,\Gamma_-')\ |\ \textrm{there\ exists\ a\ sequence $\{\varphi_n\}\subset D(Q_{C,m}(E))$}\ \\
& \phantom{=\{}
{\rm and}\ f\in H_0^{(m,0)}(G\times S,\Gamma_-')\ {\rm such\ that}\ 
\n{\varphi_n-\varphi}_{H_0^{(m,0)}(G\times S,\Gamma_-')}\to 0\ \\
& \phantom{=\{}
{\rm and}\ \n{Q_{C,m}(E)\varphi_n-f}_{H_0^{(m,0)}(G\times S,\Gamma_-')}\to 0\ {\rm as}\ n\to\infty\} \\
\widetilde Q_{C,m}(E)\varphi &= f.
\end{cases}
\]

Since by \eqref{csda-5}, \eqref{csda-2} the operator 
\[
R_{C,m}\varphi:={1\over {\hat a(E)}}\widehat \Sigma(E)\varphi+C\varphi-{1\over {\hat a(E)}}\widehat K_{r}(E)\varphi
\]
is a bounded operator $H_0^{(m,0)}(G\times S,\Gamma_-')\to H_0^{(m,0)}(G\times S,\Gamma_-')$ and since $a$ obeys \eqref{csda-7}, \eqref{csda-8},
we have

\begin{lemma}\label{csda-le1}
For all $E\in I$
the domain $D(\widetilde Q_{C,m}(E))$ of $\widetilde Q_{C,m}(E)$ satisfies
\bea
D(\widetilde Q_{C,m}(E))&=\{\varphi\in H_0^{(m,0)}(G\times S,\Gamma_-')\ |\ \textrm{there\ exists\ a\ sequence $\{\varphi_n\}\subset \cap_{k=1}^\infty C_0^{(k,0)}(G\times S,\Gamma_-')$} \nonumber \\
& \phantom{=\{}
{\rm and}\ h\in H_0^{(m,0)}(G\times S,\Gamma_-')\ {\rm such\ that}\ 
\n{\varphi_n-\varphi}_{H_0^{(m,0)}(G\times S,\Gamma_-')}\to 0\nonumber \\
& \phantom{=\{}
{\rm and}\ \n{\omega\cdot\nabla_x\varphi_n-h}_{H_0^{(m,0)}(G\times S,\Gamma_-')}\to 0\ {\rm as}\ n\to\infty\}
\eea
and therefore $D(\widetilde Q_{C,m}(E))$ is independent of $E$.
\end{lemma}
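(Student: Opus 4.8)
The plan is to show that the domain of the closure $\widetilde Q_{C,m}(E)$ is governed entirely by the \emph{principal part} $\omega\cdot\nabla_x$, since the remaining terms in $Q_C(E)$ are bounded. The key observation is that for a closed, densely defined operator $A$ and a \emph{bounded} operator $B$ on the same Hilbert space, the operator $A+B$ is closed with $D(A+B)=D(A)$, and moreover the closure of the restriction $(A+B)|_{D_0}$ of $A+B$ to a core $D_0$ of $A$ equals $A'+B$ where $A'$ is the closure of $A|_{D_0}$. First I would set $B(E):=R_{C,m}$ (the multiplication-by-$\tfrac{1}{\hat a(E)}\widehat\Sigma(E)+C-\tfrac1{\hat a(E)}\widehat K_r(E)$ operator), which is bounded on $H_0^{(m,0)}(G\times S,\Gamma_-')$ for every $E\in I$ by \eqref{csda-5} together with Lemma \ref{csda-th1}; and I would set $A_0(E)$ to be the unbounded operator $\varphi\mapsto \tfrac1{\hat a(E)}\omega\cdot\nabla_x\varphi$ with domain $\cap_{k=1}^\infty C_0^{(k,0)}(G\times S,\Gamma_-')$. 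Then $Q_{C,m}(E)=A_0(E)+B(E)|_{D(A_0(E))}$.

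The main step is the identity $D(\widetilde Q_{C,m}(E))=D(\widetilde A_0(E))$, where $\widetilde A_0(E)$ denotes the closure of $A_0(E)$. This follows from the elementary fact that adding a bounded operator does not change closability or the domain of the closure: if $\varphi_n\to\varphi$ and $Q_{C,m}(E)\varphi_n=A_0(E)\varphi_n+B(E)\varphi_n$ converges to some $f$, then since $B(E)\varphi_n\to B(E)\varphi$ (by boundedness of $B(E)$), we get $A_0(E)\varphi_n\to f-B(E)\varphi=:h$; conversely if $A_0(E)\varphi_n\to h$ then $Q_{C,m}(E)\varphi_n\to h+B(E)\varphi$. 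This gives exactly the stated characterization of $D(\widetilde Q_{C,m}(E))$ in terms of sequences $\{\varphi_n\}\subset \cap_{k=1}^\infty C_0^{(k,0)}(G\times S,\Gamma_-')$ with $\varphi_n\to\varphi$ and $\omega\cdot\nabla_x\varphi_n$ convergent (note $\tfrac1{\hat a(E)}$ is a bounded multiplier with bounded inverse by \eqref{csda-7}, \eqref{csda-8}, so convergence of $\tfrac1{\hat a(E)}\omega\cdot\nabla_x\varphi_n$ in $H_0^{(m,0)}$ is equivalent to convergence of $\omega\cdot\nabla_x\varphi_n$ there—modulo a routine estimate of the $H^{(m,0)}$-norm of a product using the Leibniz rule and \eqref{csda-7}).

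Finally, to conclude $E$-independence, I would observe that the characterizing condition just derived refers only to the operator $\omega\cdot\nabla_x$ acting on the fixed space $H_0^{(m,0)}(G\times S,\Gamma_-')$ and to the fixed core $\cap_{k=1}^\infty C_0^{(k,0)}(G\times S,\Gamma_-')$; neither $a$, $\Sigma$, $\sigma^2$ nor $C$ appears in it. Hence $D(\widetilde Q_{C,m}(E))$ is literally the same subspace of $H_0^{(m,0)}(G\times S,\Gamma_-')$ for every $E\in I$, namely the domain of the closure of $\omega\cdot\nabla_x$ on that core. The only place where care is needed is the equivalence "$\tfrac1{\hat a(E)}\omega\cdot\nabla_x\varphi_n$ converges $\iff$ $\omega\cdot\nabla_x\varphi_n$ converges," which I expect to be the main (though still routine) obstacle: one multiplies by $\hat a(E)$ resp. $\tfrac1{\hat a(E)}$, both of which lie in $\s W^{\infty,(m,0)}(G\times I)$ with the latter bounded below by \eqref{csda-8}, and applies the product estimate analogous to \eqref{li}. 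This completes the proof.
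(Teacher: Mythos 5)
Your proposal is correct and follows essentially the route the paper intends: the lemma is stated there as an immediate consequence of the boundedness of $R_{C,m}$ on $H_0^{(m,0)}(G\times S,\Gamma_-')$ (via \eqref{csda-4}, \eqref{csda-5}, \eqref{csda-2}) together with \eqref{csda-7}, \eqref{csda-8}, which is exactly your decomposition into the closure of the principal part plus a bounded perturbation, followed by the observation that $\hat a(E)$ and $1/\hat a(E)$ are mutually inverse bounded multipliers so the convergence of $\tfrac{1}{\hat a(E)}\omega\cdot\nabla_x\varphi_n$ is equivalent to that of $\omega\cdot\nabla_x\varphi_n$. No gaps; the multiplier estimate you flag as the only delicate point is indeed handled by the analogue of \eqref{li} together with Proposition \ref{pr:H^(m,0,0,Gamma_-):characterization:1} to see that the products stay in $H_0^{(m,0)}(G\times S,\Gamma_-')$.
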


Let
\begin{align}\label{eq:rm_f:1}
{\rm f}(E)(x,\omega):=-{1\over{\hat a(x,E)}}e^{CE}\hat f(x,\omega,E).
\end{align}
Consider the evolution equation
\be\label{ecsd6}
{\p {\phi}E}-\widetilde Q_{C,m}(E)\phi={\rm f}(E),\quad \phi(0)=0,
\ee
where we interpret $\phi$  as a mapping $I\to L^2(G\times S)$ by defining $\phi(E)(x,\omega):=\phi(x,\omega,E)$.
The concept of solution of (\ref{ecsd6}) includes that $\phi(E)\in D(\widetilde Q_{C,m}(E))\subset H_0^{(m,0)}(G\times S,\Gamma_-)$ for any $E\in I$
and then that the solution of (\ref{ecsd6}) satisfies (strongly) the initial inflow boundary value problem
(\ref{se1a}), (\ref{se2a}),  (\ref{se3a}).

\begin{lemma}\label{csdale1-a}
Suppose  that  
\be\label{nis-15-a}
a\in  W^{\infty,(1,0)}(G\times I),
\ee
\be\label{nis-16-a}
-a(x,E)\geq\kappa>0\quad {\rm a.e.}
\ee
Then 
\begin{multline}\label{ac}  
\la -{1\over {\hat a(\cdot,E)}}\omega\cdot\nabla_x\varphi,\varphi\ra_{L^2(G\times S)}
\geq 
-C_0'\n{\varphi}_{L^2(G\times S)}^2
\\
\forall\varphi\in \widetilde W_{-,0}^2(G\times S):= \{\varphi\in\widetilde W^2(G\times S)\ |\ \varphi_{|\Gamma_-'}=0\},
\end{multline}
where $C_0':={1\over{2\kappa^2}}\n{\nabla_x a}_{L^\infty(G\times I)}$
and the space $\widetilde W^2(G\times S)$ is defined similarly to $\widetilde W^2(G\times S\times I)$ (see \eqref{fs12}).
\end{lemma}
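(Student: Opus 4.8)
The plan is to integrate by parts in the spatial variable using the Green's formula \eqref{green} (in its version on $G\times S$, obtained by the same density argument from $C^1(\ol G\times S)$), and then to exploit the sign of $\hat a$ on the outflow boundary $\Gamma_+'$ together with the vanishing inflow trace of $\varphi$.

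First I would fix $E\in I$ and set $b:=1/\hat a(\cdot,E)$. By \eqref{nis-15-a} and \eqref{nis-16-a} we have $\hat a(\cdot,E)\leq -\kappa<0$, so $b$ is well defined with $-1/\kappa\leq b<0$ pointwise, and $b$ is Lipschitz in $x$ with $\nabla_x b=-\hat a(\cdot,E)^{-2}\nabla_x\hat a(\cdot,E)$, hence $\n{\nabla_x b}_{L^\infty(G)}\leq \kappa^{-2}\n{\nabla_x a}_{L^\infty(G\times I)}$. In particular $b\in L^\infty(G)$ and $\omega\cdot\nabla_x b\in L^\infty(G\times S)$, so for $\varphi\in\widetilde W^2(G\times S)$ the product $b\varphi$ again lies in $\widetilde W^2(G\times S)$: indeed $\omega\cdot\nabla_x(b\varphi)=(\omega\cdot\nabla_x b)\varphi+b\,\omega\cdot\nabla_x\varphi\in L^2(G\times S)$ and $\gamma(b\varphi)=b_{|\partial G}\,\gamma(\varphi)\in T^2(\Gamma')$.

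Next I would apply Green's formula to the pair $\psi:=b\varphi$, $v:=\varphi$. Expanding $\omega\cdot\nabla_x(b\varphi)$ and rearranging gives
\[
\la b\,\omega\cdot\nabla_x\varphi,\varphi\ra_{L^2(G\times S)}
&=\frac12\int_{\Gamma'}(\omega\cdot\nu)\,b\,\varphi^2\,d\sigma d\omega
-\frac12\int_{G\times S}(\omega\cdot\nabla_x b)\,\varphi^2\,dx d\omega,
\]
and therefore
\[
\la -\tfrac{1}{\hat a(\cdot,E)}\,\omega\cdot\nabla_x\varphi,\varphi\ra_{L^2(G\times S)}
&=-\frac12\int_{\Gamma'}(\omega\cdot\nu)\,b\,\varphi^2\,d\sigma d\omega
+\frac12\int_{G\times S}(\omega\cdot\nabla_x b)\,\varphi^2\,dx d\omega.
\]
Since $\varphi\in\widetilde W^2_{-,0}(G\times S)$ satisfies $\gamma(\varphi)=0$ on $\Gamma_-'$ and $\omega\cdot\nu=0$ on $\Gamma_0'$, the boundary integral reduces to an integral over $\Gamma_+'$, where $\omega\cdot\nu>0$ and $b<0$; hence $-\frac12\int_{\Gamma'}(\omega\cdot\nu)\,b\,\varphi^2\,d\sigma d\omega\geq0$. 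For the remaining volume term I would use $\big|\frac12\int_{G\times S}(\omega\cdot\nabla_x b)\varphi^2\big|\leq\frac12\n{\omega\cdot\nabla_x b}_{L^\infty(G\times S)}\n{\varphi}_{L^2(G\times S)}^2\leq\frac{1}{2\kappa^2}\n{\nabla_x a}_{L^\infty(G\times I)}\n{\varphi}_{L^2(G\times S)}^2=C_0'\n{\varphi}_{L^2(G\times S)}^2$, which together with the nonnegativity of the boundary term yields \eqref{ac}.

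The only genuinely delicate point is the justification that Green's formula may be applied to the pair $(b\varphi,\varphi)$ for $\varphi$ merely in $\widetilde W^2_{-,0}(G\times S)$ rather than smooth; this is handled by the closure/density property of $\widetilde W^2(G\times S)$ (the $G\times S$ analogue of \cite[Corollary 2.22]{tervo18-up}) together with the fact, checked above, that multiplication by the Lipschitz function $b$ maps $\widetilde W^2(G\times S)$ into itself and is compatible with the trace operator $\gamma$. Everything else is the routine sign bookkeeping indicated above.
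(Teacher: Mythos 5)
Your proof is correct, and it is the standard multiplier/integration-by-parts argument one expects here: the paper itself gives no details, deferring entirely to \cite[Lemma 3.4]{tervo17}, where essentially this computation (Green's formula applied to $\tfrac{1}{\hat a}\varphi$ and $\varphi$, nonnegativity of the $\Gamma_+'$ boundary term since $\tfrac{1}{\hat a}<0$ there, and the $L^\infty$ bound on $\nabla_x(1/\hat a)$ giving the constant $C_0'$) is carried out. Your care in checking that multiplication by the Lipschitz function $b=1/\hat a(\cdot,E)$ preserves $\widetilde W^2(G\times S)$ and commutes with the trace, so that \eqref{green} applies, is exactly the point that needs justification, and you have handled it adequately.
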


\begin{proof}
See \cite[Lemma 3.4]{tervo17}.
\end{proof}

\begin{corollary}\label{csdale1-cor}
Suppose that (\ref{nis-15-a}) and (\ref{nis-16-a}) hold. 
Then for $\alpha\in\N_0^3$
\be\label{ac-p}  
\la -{1\over {\hat a(\cdot,E)}}\omega\cdot\nabla_x(\partial_x^\alpha\varphi),\partial_{x}^\alpha\varphi\ra_{L^2(G\times S)}
\geq 
-C_0'\n{\partial_{x}^\alpha\varphi}_{L^2(G\times S)}^2\quad \forall\varphi\in \cap_{k=1}^\infty C_0^{(k,0)}(\ol G\times S,\Gamma_-').
\ee
  
\end{corollary}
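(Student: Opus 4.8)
The plan is to obtain \eqref{ac-p} directly from Lemma \ref{csdale1-a} by observing that for $\varphi\in\cap_{k=1}^\infty C_0^{(k,0)}(\ol G\times S,\Gamma_-')$ each spatial derivative $\partial_x^\alpha\varphi$ is again an admissible test function for the inequality \eqref{ac}. First I would note that if $\varphi\in\cap_{k=1}^\infty C_0^{(k,0)}(\ol G\times S,\Gamma_-')$, then $\partial_x^\alpha\varphi\in\cap_{k=1}^\infty C_0^{(k,0)}(\ol G\times S,\Gamma_-')$ as well (differentiation only shrinks the support), hence in particular $\partial_x^\alpha\varphi\in\widetilde W^2(G\times S)$ and $(\partial_x^\alpha\varphi)_{|\Gamma_-'}=0$ since the support of $\varphi$ stays away from $\Gamma_-'\cup\Gamma_0'$. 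Therefore $\partial_x^\alpha\varphi\in\widetilde W_{-,0}^2(G\times S)$, which is exactly the class over which \eqref{ac} is asserted.

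The second step is simply to apply Lemma \ref{csdale1-a} with $\varphi$ replaced by $\partial_x^\alpha\varphi$. This yields
\[
\la -{1\over {\hat a(\cdot,E)}}\omega\cdot\nabla_x(\partial_x^\alpha\varphi),\partial_{x}^\alpha\varphi\ra_{L^2(G\times S)}
\geq
-C_0'\n{\partial_{x}^\alpha\varphi}_{L^2(G\times S)}^2,
\]
with the same constant $C_0'={1\over{2\kappa^2}}\n{\nabla_x a}_{L^\infty(G\times I)}$, since that constant depends only on $a$ and not on the particular test function. This is precisely \eqref{ac-p}, and the corollary follows.

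There is essentially no obstacle here: the only point requiring a line of justification is that $\partial_x^\alpha\varphi$ has the right boundary behaviour, and that is immediate from the support condition defining $C_0^{(k,0)}(\ol G\times S,\Gamma_-')$ — namely ${\rm supp}(\varphi)\cap(\Gamma_-'\cup\Gamma_0')=\emptyset$ forces ${\rm supp}(\partial_x^\alpha\varphi)\cap(\Gamma_-'\cup\Gamma_0')=\emptyset$, so in particular $(\partial_x^\alpha\varphi)_{|\Gamma_-'}=0$. One could alternatively note that the one-variable commutator identity $\omega\cdot\nabla_x(\partial_x^\alpha\varphi)=\partial_x^\alpha(\omega\cdot\nabla_x\varphi)$ holds (the coefficients $\omega$ are constant in $x$), but this is not even needed for the argument as stated, since Lemma \ref{csdale1-a} is applied as a black box to the function $\partial_x^\alpha\varphi$ itself.
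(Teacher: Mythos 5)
Your proposal is correct and is essentially identical to the paper's own proof: one notes that $\partial_x^\alpha\varphi\in\widetilde W_{-,0}^2(G\times S)$ for $\varphi\in\cap_{k=1}^\infty C_0^{(k,0)}(\ol G\times S,\Gamma_-')$ and then applies Lemma \ref{csdale1-a} to $\partial_x^\alpha\varphi$. Your extra remark that the support condition is preserved under differentiation is exactly the (implicit) justification the paper relies on.
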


\begin{proof}

Noting that $\partial_{x}^\alpha\varphi\in\widetilde W_{-,0}^2(G\times S)$ for $\varphi\in \cap_{k=1}^\infty C_0^{(k,0)}(\ol G\times S,\Gamma_-')$ the assertion (\ref{ac}) follows from the previous Lemma \ref{csdale1-a}.

\end{proof}

Techniques analogous to the ones we have used above then yield

\begin{lemma}\label{csdale1}
Suppose  that  
\bea
&
a\in \s W^{\infty,(m,0)}(G\times I)\label{nis-15}
\\
&
-a(x,E)\geq\kappa>0,\ {\rm a.e.}\label{nis-16}
\\
&
\Sigma\in \s W^{\infty,(m,0,0)}(G\times S\times I)\label{nis-17}
\\
&
\sigma^2\in \s W_0^{\infty,(m,0,0)}(G\times S\times I,L^1(S'),\Gamma_-).\label{nis-18}
\eea
Then there exists a constant $C_0$ such that for any $C>C_0$ 
and any fixed $E\in I$
the operator $\widetilde Q_{C,m}(E):H_0^{(m,0)}(G\times S,\Gamma_-')\to H_0^{(m,0)}(G\times S,\Gamma_-')$ 
is $m$-dissipative.
\end{lemma}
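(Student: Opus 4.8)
The plan is to verify the two ingredients of the Lumer--Phillips characterisation of $m$-dissipativity (\cite[section~1.4]{pazy83}): that $\widetilde Q_{C,m}(E)$ is dissipative on $H_0^{(m,0)}(G\times S,\Gamma_-')$, and that $R(\lambda\s I-\widetilde Q_{C,m}(E))=H_0^{(m,0)}(G\times S,\Gamma_-')$ for some $\lambda>0$; density and closedness of $\widetilde Q_{C,m}(E)$ are part of its definition. Throughout, $E\in I$ is fixed and one checks that every constant produced is independent of $E$, which is precisely what \eqref{csda-5} and \eqref{csda-2} guarantee. It is convenient to write $\widetilde Q_{C,m}(E)=\widetilde P_{C,m}(E)-\tfrac1{\hat a(E)}\widehat K_r(E)$, where $\widetilde P_{C,m}(E)$ denotes the closure on $\cap_{k=1}^\infty C_0^{(k,0)}(\ol G\times S,\Gamma_-')$ of $\varphi\mapsto\tfrac1{\hat a(E)}\omega\cdot\nabla_x\varphi+\tfrac1{\hat a(E)}\widehat\Sigma(E)\varphi+C\varphi$. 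By \eqref{csda-4}--\eqref{csda-5} and Lemma~\ref{csda-th1}, the operators $\tfrac1{\hat a(E)}\widehat\Sigma(E)$ and $\tfrac1{\hat a(E)}\widehat K_r(E)$ are bounded on $H_0^{(m,0)}(G\times S,\Gamma_-')$ with norms controlled uniformly in $E$; so the task reduces to understanding the transport part $\tfrac1{\hat a(E)}\omega\cdot\nabla_x$ together with the shift by $C$, up to two bounded perturbations.

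For dissipativity I would take $\varphi\in\cap_{k=1}^\infty C_0^{(k,0)}(\ol G\times S,\Gamma_-')$, expand $\la\widetilde Q_{C,m}(E)\varphi,\varphi\ra_{H_0^{(m,0)}}=\sum_{|\alpha|\le m}\la\partial_x^\alpha(\widetilde Q_{C,m}(E)\varphi),\partial_x^\alpha\varphi\ra_{L^2(G\times S)}$, and treat the transport part by Leibniz's rule. The ``diagonal'' contribution $\la\tfrac1{\hat a(E)}\omega\cdot\nabla_x(\partial_x^\alpha\varphi),\partial_x^\alpha\varphi\ra_{L^2(G\times S)}$ is controlled by Corollary~\ref{csdale1-cor} (legitimately, since $\partial_x^\alpha\varphi$ vanishes on $\Gamma_-'$), and the commutator terms $(\partial_x^{\alpha-\beta}\tfrac1{\hat a})\,\omega\cdot\nabla_x\partial_x^\beta\varphi$ with $\beta<\alpha$ are of differential order $\le m$, hence bounded in $L^2(G\times S)$ by a constant times $\n{\varphi}_{H_0^{(m,0)}}$, using that $\tfrac1{\hat a}\in W^{\infty,(m,0)}(G\times I)$ (a consequence of \eqref{nis-15}, \eqref{nis-16}). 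Combining this with the uniform operator-norm bounds for the two zeroth-order pieces produces a constant $C_0$, depending only on $m$, $\kappa$ and the norms in \eqref{nis-15}--\eqref{nis-18} and not on $E$, such that $\la\widetilde Q_{C,m}(E)\varphi,\varphi\ra_{H_0^{(m,0)}(G\times S,\Gamma_-')}\le0$ for every such $\varphi$ as soon as $C>C_0$; passing to the closure extends the inequality to all $\varphi\in D(\widetilde Q_{C,m}(E))$, so $\widetilde Q_{C,m}(E)$ is dissipative for $C>C_0$.

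For the range condition I would fix $C>C_0$ and reproduce, in the fixed-energy $(G\times S)$ setting, the arguments of Theorems~\ref{nis-th1} and~\ref{nis-th2}. First, for $\widetilde P_{C,m}(E)$: given large $\lambda>0$ and $g\in\cap_{k=1}^\infty C_0^{(k,0)}(\ol G\times S,\Gamma_-')$, multiplying $(\lambda\s I-\widetilde P_{C,m}(E))\varphi=g$ by $-\hat a(E)$ (which satisfies $-\hat a(E)\ge\kappa>0$) recasts it as the classical transport problem $\omega\cdot\nabla_x\varphi+\Sigma_\lambda\varphi=g_\lambda$, $\varphi_{|\Gamma_-'}=0$, with $\Sigma_\lambda:=\widehat\Sigma(E)-(\lambda-C)\hat a(E)$ and $g_\lambda:=-\hat a(E)g$; for $\lambda$ large $\Sigma_\lambda\in\s W^{\infty,(m,0)}(G\times S)$ is bounded below by a positive constant, and $g_\lambda$ is supported away from $\Gamma_-'\cup\Gamma_0'$. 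Its unique solution is the explicit characteristic integral $\varphi(x,\omega)=\int_0^{t(x,\omega)}e^{-\int_0^t\Sigma_\lambda(x-s\omega,\omega)\,ds}g_\lambda(x-t\omega,\omega)\,dt$, and I would show $\varphi\in\cap_{k=1}^\infty C_0^{(k,0)}(\ol G\times S,\Gamma_-')\subset D(\widetilde P_{C,m}(E))$ exactly as in Parts~A and~B of the proof of Theorem~\ref{nis-th1}: induction on $m$ with differentiation under the integral sign, vanishing of the boundary term by (iv) of Lemma~\ref{le:basic_geometric_properties:1} since $g_\lambda$ vanishes on $\Gamma_-'$, continuity of the spatial derivatives up to $\ol G\times S$ via the $C^1$-regularity of $t$ and the continuity of $\tilde t$ (Proposition~\ref{pr:continuity_of_tilde_t:1}), the support localisation of Lemma~\ref{nis-le2}, and an approximation step for non-smooth coefficients as in Part~B of Theorem~\ref{nis-th1}. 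Since $\widetilde P_{C,m}(E)$ is $\widetilde Q_{C,m}(E)$ plus a bounded operator, it too is dissipative after a shift, so $\lambda\s I-\widetilde P_{C,m}(E)$ is bounded below and has closed range; together with density of $\cap_{k=1}^\infty C_0^{(k,0)}$ this yields $R(\lambda\s I-\widetilde P_{C,m}(E))=H_0^{(m,0)}(G\times S,\Gamma_-')$, i.e.\ $\widetilde P_{C,m}(E)$ is $m$-dissipative. Finally, $-\tfrac1{\hat a(E)}\widehat K_r(E)$ is bounded (Lemma~\ref{csda-th1}) with $\la-\tfrac1{\hat a(E)}\widehat K_r(E)\varphi,\varphi\ra_{H_0^{(m,0)}}\le\n{\tfrac1{\hat a(E)}\widehat K_r(E)}_m\n{\varphi}_{H_0^{(m,0)}}^2$, so for $\lambda_1>\n{\tfrac1{\hat a(E)}\widehat K_r(E)}_m$ the operator $\lambda_1\s I-\tfrac1{\hat a(E)}\widehat K_r(E)$ is bounded and accretive; adding it to the $m$-dissipative operator $\widetilde P_{C,m}(E)$ keeps the sum $m$-dissipative up to a further constant shift, which is exactly the perturbation argument at the end of the proof of Theorem~\ref{nis-th2}. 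Since $\widetilde Q_{C,m}(E)$ was already shown to be dissipative for $C>C_0$, this shift is absorbed and $\widetilde Q_{C,m}(E)$ is $m$-dissipative.

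The hard part will be the first half of the range argument --- confirming that the explicit characteristic integral really lies in $H_0^{(m,0)}(G\times S,\Gamma_-')$, i.e.\ that $m$ spatial differentiations are legitimate and that the result stays supported away from $\Gamma_-'\cup\Gamma_0'$. This is precisely the induction of Theorem~\ref{nis-th1} transplanted to the present setting, and it rests on the $C^1$-regularity of the escape-time map $t$ on $G\times S$ and on the continuity of its extension $\tilde t$ to $\ol G\times S$ (Proposition~\ref{pr:continuity_of_tilde_t:1}), which is where the strict convexity of $\ol G$ enters (cf.\ Remark~\ref{nonc}). By comparison, the commutator bookkeeping in the dissipativity estimate and the uniform-in-$E$ control of the two perturbation norms are routine, given Corollary~\ref{csdale1-cor}, \eqref{csda-4}--\eqref{csda-5} and Lemma~\ref{csda-th1}.
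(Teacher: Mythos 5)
Your proposal is correct and follows essentially the same route as the paper: the dissipativity estimate is obtained via Leibniz's rule, Corollary \ref{csdale1-cor} for the top-order ``diagonal'' terms and absorption of the commutator and bounded zeroth-order perturbations into the constant, while the range condition is handled by transplanting the arguments of Theorems \ref{nis-th1} and \ref{nis-th2} to the fixed-energy setting. You actually supply more detail on the range condition (the reduction to the classical transport problem after multiplying by $-\hat a(E)$ and the ensuing induction) than the paper, which states only that this part follows by similar arguments and omits it.
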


\begin{proof}

Let $S_m(E)$ (for any fixed $E\in I$) be a linear operator defined by
\[
\begin{cases}
& D(S_m(E)):=\cap_{k=1}^\infty C_0^{(k,0)}(G\times S,\Gamma_-'),\\
& S_m(E)\varphi:=-{1\over {\hat a(\cdot,E)}}\omega\cdot\nabla_x\varphi.
\end{cases}
.
\]
Similarly to the proof of Lemma \ref{nis-le1}, we will show that
there exists a constant $C_0''$ such that for any $C'\geq C_0''$
\be\label{nis-18-a}
\la (\widetilde S_m(E)+C'\s I)\varphi,\varphi\ra_{H_0^{(m,0)}(G\times S,\Gamma_-')}\geq 0
\quad
\forall\varphi\in D(\widetilde S_m(E)),
\ee
where $\widetilde S_{m}(E):H_0^{(m,0)}(G\times S,\Gamma_-')\to H_0^{(m,0)}(G\times S,\Gamma_-')$ is the closure of $S_{m}(E)$ .

It suffices to show that 
\be\label{nis-18-a-a}
\la (S_m(E)+C'\s I)\varphi,\varphi\ra_{H_0^{(m,0)}(G\times S,\Gamma_-')}\geq 0
\quad
\forall\varphi\in \cap_{k=1}^\infty C_0^{(k,0)}(G\times S,\Gamma_-').
\ee
Using the formula
\bea\label{m1-1-o}
\la {1\over {\hat a(\cdot,E)}}\omega\cdot\nabla_x\varphi,\varphi\ra_{H_0^{(m,0)}(G\times S,\Gamma_-')}
={}&
\la {1\over {\hat a(\cdot,E)}}\omega\cdot\nabla_x\varphi,\varphi\ra_{H_0^{(m-1,0)}(G\times S,\Gamma_-')}
\nonumber\\
{}&
+
\sum_{|\alpha|=m}\la\partial_x^\alpha\big({1\over{\hat a(\cdot,E)}}\omega\cdot\nabla_x\varphi\big),\partial_x^\alpha\varphi\ra_{L^2(G\times S)}
\eea
and the Leibniz's rule
\[
&
\partial_x^\alpha\big({1\over{\hat a(\cdot,E)}}\omega\cdot\nabla_x\varphi\big)=
\sum_{\beta\leq \alpha}{\alpha \choose\beta}\partial_x^{\alpha-\beta}\big({1\over{\hat a(\cdot,E)}}\big)
\partial_x^\beta\big(\omega\cdot\nabla_x\varphi\big)\\
&
=
{1\over{\hat a(\cdot,E)}}\partial_x^\alpha\big(\omega\cdot\nabla_x\varphi\big)
+
\sum_{\beta< \alpha}{\alpha \choose\beta}\partial_x^{\alpha-\beta}\big({1\over{\hat a(\cdot,E)}}\big)
\partial_x^\beta\big(\omega\cdot\nabla_x\varphi\big)
\\
&
=
{1\over{\hat a(\cdot,E)}}\big(\omega\cdot\nabla_x(\partial_x^\alpha\varphi)\big)
+
\sum_{\beta< \alpha}{\alpha \choose\beta}\partial_x^{\alpha-\beta}\big({1\over{\hat a(\cdot,E)}}\big)
\big(\omega\cdot\nabla_x (\partial_x^\beta\varphi)\big)
\]
we get 
\[
&
\la {1\over {\hat a(\cdot,E)}}\omega\cdot\nabla_x\varphi,\varphi\ra_{H_0^{(m,0)}(G\times S,\Gamma_-')}
=
\la {1\over {\hat a(\cdot,E)}}\omega\cdot\nabla_x\varphi,\varphi\ra_{H_0^{(m-1,0)}(G\times S,\Gamma_-')}
\nonumber\\
&
+\sum_{|\alpha|= m}\sum_{\beta< \alpha}{\alpha \choose\beta}
\la \partial_x^{\alpha-\beta}\big({1\over{\hat a(\cdot,E)}}\big)
\big(\omega\cdot\nabla_x (\partial_x^\beta\varphi)\big),\partial_x^\alpha\varphi\ra_{L^2(G\times S)}
\nonumber\\
&
+\sum_{|\alpha|= m}\la {1\over {\hat a(\cdot,E)}}\omega\cdot\nabla_x(\partial_x^\alpha\varphi),\partial_x^\alpha\varphi\ra_{L^2(G\times S}
=:
R(\varphi)+
\sum_{|\alpha|= m}\la {1\over {\hat a(\cdot,E)}}\omega\cdot\nabla_x(\partial_x^\alpha\varphi),\partial_x^\alpha\varphi\ra_{L^2(G\times S)}.
\]

The term $R(\varphi)$ satisfies
\[
|R(\varphi)|\leq C_1\n{\varphi}_{H_0^{(m,0)}(G\times S,\Gamma_-')}^2
\]
whereas by Corollary \ref{csdale1-cor} the term
$\sum_{|\alpha|= m}\la -{1\over {\hat a(\cdot,E)}}\omega\cdot\nabla_x(\partial_x^\alpha\varphi),\partial_x^\alpha\varphi\ra_{L^2(G\times S)}$ 
obeys
\[
&
\sum_{|\alpha|= m}\la -{1\over {\hat a(\cdot,E)}}\omega\cdot\nabla_x(\partial_x^\alpha\varphi),\partial_x^\alpha\varphi\ra_{L^2(G\times S)} 
\geq 
-C_0'\sum_{|\alpha|= m}\n{\partial_{x}^\alpha\varphi}_{L^2(G\times S)}^2.
\]
Consequently, we have
\[
&
\la  S_m(E)\varphi,\varphi\ra_{H_0^{(m,0)}(G\times S,\Gamma_-')}
=
\la -{1\over {\hat a(\cdot,E)}}\omega\cdot\nabla_x\varphi,\varphi\ra_{H_0^{(m,0)}(G\times S,\Gamma_-')}
\nonumber\\
&
\geq 
-C_1\n{\varphi}_{H^{(m,0)}(G\times S,\Gamma_-')}^2-C_0'\sum_{|\alpha|=m}\n{\partial_{x}^\alpha\varphi}_{L^2(G\times S)}^2
\geq -(C_1+C_0')\n{\varphi}_{H^{(m,0)}(G\times S,\Gamma_-')}^2
\]
and so the assertion \eqref{nis-18-a-a} is valid with $C_0'':=C_1+C_0'$.

In other respects, the stated $m$-dissipativity of $\widetilde Q_{C,m}(E)$
follows by similar arguments as used in Theorems \ref{nis-th1} and \ref{nis-th2} above. 
We omit further details.
\end{proof}

To apply Theorem \ref{evoth} we need some additional assumptions.
We shall assume that
\be\label{nis-19}
{\sigma^2}\in C^1(I,W^{\infty,(m,0)}(G\times S,L^1(S')))\cap
C^1(I,W^{\infty,(m,0)}(G\times S',L^1(S))).
\ee

\begin{lemma}\label{le:K_C1}
Under the assumption \eqref{nis-19}, for any fixed $\varphi\in H^{(m,0)}(G\times S)$
the map
\[
k_\varphi:I\to H^{(m,0)}(G\times S);\quad k_\varphi(E)=\widehat{K}_r(E)\varphi
\]
is in $C^1(I, H^{(m,0)}(G\times S))$ and its derivative is
$\frac{dk_\varphi}{dE}(E)=\p{\widehat K_r}{E}(E)\varphi$,
where for any fixed $E$ the operator ${\p {\widehat K_r}{E}}(E): H^{(m,0)}(G\times S)\to H^{(m,0)}(G\times S)$ is
defined by
\be\label{K1}
\Big({\p {\widehat K_r}{E}}(E)\varphi\Big)(x,\omega):=\int_{S'}{\p {\hat\sigma^2}{E}}(x,\omega',\omega,E)\varphi(x,\omega') d\omega'.
\ee
\end{lemma}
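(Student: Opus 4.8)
The plan is to exploit that the operator $\widehat K_r(E)$ depends \emph{linearly} on the energy slice $\hat\sigma^2(\cdot,\cdot,\cdot,E)$ of its kernel, together with a boundedness estimate that is uniform in that kernel, so that $C^1$-regularity of $E\mapsto\widehat K_r(E)\varphi$ in $H^{(m,0)}(G\times S)$ follows at once from $C^1$-regularity of $E\mapsto\hat\sigma^2(\cdot,\cdot,\cdot,E)$ granted by \eqref{nis-19}. First I would record the frozen-energy form of the estimate \eqref{nis-10}: for a measurable kernel $\tau:G\times S'\times S\to\R$ set $(K[\tau]\varphi)(x,\omega):=\int_{S'}\tau(x,\omega',\omega)\varphi(x,\omega')\,d\omega'$, so that $\widehat K_r(E)=K[\hat\sigma^2(\cdot,\cdot,\cdot,E)]$ and, by \eqref{K1}, $\partial_E\widehat K_r(E)=K[\partial_E\hat\sigma^2(\cdot,\cdot,\cdot,E)]$. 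The computation used to prove Theorem \ref{el-k-b} (and reused in Lemma \ref{csda-th1}), carried out with $E$ held fixed, then gives
\[
\n{K[\tau]\varphi}_{H^{(m,0)}(G\times S)}^2\leq C_m\,\n{\tau}_{X_1}\n{\tau}_{X_2}\,\n{\varphi}_{H^{(m,0)}(G\times S)}^2,
\]
with $X_1:=W^{\infty,(m,0)}(G\times S,L^1(S'))$, $X_2:=W^{\infty,(m,0)}(G\times S',L^1(S))$ and $C_m$ depending only on $m$. Since $E_0=0$, the affine substitution $E\mapsto E_m-E$ carries $I$ onto itself, so \eqref{nis-19} yields $\hat\sigma^2\in C^1(I,X_1)\cap C^1(I,X_2)$ with derivative $\partial_E\hat\sigma^2(\cdot,\cdot,\cdot,E)$; applying the displayed estimate to $\tau=\hat\sigma^2(\cdot,\cdot,\cdot,E)$ and to $\tau=\partial_E\hat\sigma^2(\cdot,\cdot,\cdot,E)$ shows in particular that both $\widehat K_r(E)$ and $\partial_E\widehat K_r(E)$ are bounded on $H^{(m,0)}(G\times S)$ for every $E\in I$.

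Next I would prove differentiability. Fixing $\varphi\in H^{(m,0)}(G\times S)$ and $E\in I$, for small increments $h$ put
\[
\tau_h:=\hat\sigma^2(\cdot,\cdot,\cdot,E+h)-\hat\sigma^2(\cdot,\cdot,\cdot,E)-h\,\partial_E\hat\sigma^2(\cdot,\cdot,\cdot,E).
\]
By linearity of $\tau\mapsto K[\tau]$ one has the exact identity $\widehat K_r(E+h)\varphi-\widehat K_r(E)\varphi-h\,\partial_E\widehat K_r(E)\varphi=K[\tau_h]\varphi$, so dividing the boundedness estimate above by $|h|$ bounds $\tfrac1{|h|}\n{\widehat K_r(E+h)\varphi-\widehat K_r(E)\varphi-h\,\partial_E\widehat K_r(E)\varphi}_{H^{(m,0)}(G\times S)}$ by $C_m^{1/2}\,(\n{\tau_h}_{X_1}/|h|)^{1/2}(\n{\tau_h}_{X_2}/|h|)^{1/2}\,\n{\varphi}_{H^{(m,0)}(G\times S)}$; both quotients $\n{\tau_h}_{X_1}/|h|$ and $\n{\tau_h}_{X_2}/|h|$ tend to $0$ as $h\to0$ precisely because $\hat\sigma^2$ is differentiable as a curve into $X_1$ and into $X_2$ with derivative $\partial_E\hat\sigma^2(\cdot,\cdot,\cdot,E)$, whence $k_\varphi'(E)=\partial_E\widehat K_r(E)\varphi$. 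Continuity of $E\mapsto k_\varphi'(E)$ then follows the same way, starting from $k_\varphi'(E)-k_\varphi'(E')=K\big[\partial_E\hat\sigma^2(\cdot,\cdot,\cdot,E)-\partial_E\hat\sigma^2(\cdot,\cdot,\cdot,E')\big]\varphi$ and using the same estimate together with continuity of $E\mapsto\partial_E\hat\sigma^2(\cdot,\cdot,\cdot,E)$ into $X_1$ and into $X_2$ (again part of \eqref{nis-19}). This delivers $k_\varphi\in C^1(I,H^{(m,0)}(G\times S))$ with the asserted derivative.

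The one genuinely delicate point is that the operator bound inherited from \eqref{nis-10} is \emph{multiplicative} in the two different norms $\n{\cdot}_{X_1}$ and $\n{\cdot}_{X_2}$; consequently the error kernel $\tau_h$, and likewise the increments of $\partial_E\hat\sigma^2$, must be shown small in \emph{both} of these norms at once, which is exactly why hypothesis \eqref{nis-19} asks for $\sigma^2$ in the intersection $C^1(I,X_1)\cap C^1(I,X_2)$ rather than mere pointwise-in-$E$ differentiability. A secondary, routine, bookkeeping point is keeping track of the composition $E\mapsto E_m-E$ (and the resulting sign) that relates $\sigma^2$ to $\hat\sigma^2$. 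Beyond these, the argument is just the standard "linear dependence on a parameter plus a uniform operator bound" scheme, and I do not anticipate further obstacles.
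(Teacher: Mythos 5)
Your proposal is correct and follows essentially the same route as the paper: the paper likewise forms the difference quotient minus the candidate derivative, recognizes it as the integral operator with remainder kernel $\tau_h$, and controls it by the same bilinear Cauchy--Schwarz estimate in the two mixed norms $W^{\infty,(m,0)}(G\times S,L^1(S'))$ and $W^{\infty,(m,0)}(G\times S',L^1(S))$ (its estimates \eqref{nis-20}--\eqref{nis-21}), concluding via \eqref{nis-19}. Your packaging through the linear map $\tau\mapsto K[\tau]$ is only a cleaner bookkeeping of the same argument, including the final continuity-of-the-derivative step.
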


\begin{proof}
We have
\[
{{k_\varphi(E+h)-k_\varphi(E)}\over h}(x,\omega)=\int_{S'} {{\hat\sigma^2(x,\omega',\omega,E+h)-\hat\sigma^2(x,\omega',\omega,E)}\over h}\varphi(x,\omega') d\omega'
\]
and for its derivatives
\[
&
\partial_x^\alpha\big({{k_\varphi(E+h)-k_\varphi(E)}\over h}\big)(x,\omega)\\
&
=\sum_{\beta\leq \alpha}{\alpha \choose\beta}\int_{S'} {{\partial_x^{\alpha-\beta}\big(\hat\sigma^2(x,\omega',\omega,E+h)-\hat\sigma^2(x,\omega',\omega,E)\big)}\over h}(\partial_x^\beta\varphi)(x,\omega') d\omega',\ |\alpha|\leq m.
\]
In addition,
\[
\partial_x^\alpha\Big({\p {\widehat K_r}{E}}(E)\varphi\Big)(x,\omega)=\sum_{\beta\leq \alpha}{\alpha \choose\beta}\int_{S'}\partial_x^{\alpha-\beta}\big({\p {\hat\sigma^2}{E}}\big)(x,\omega',\omega,E)(\partial_x^\beta\varphi)(x,\omega') d\omega'.
\]
Hence for $ |\alpha|\leq m$
\bea\label{nis-20}
&
\partial_x^\alpha\Big({{k_\varphi(E+h)-k_\varphi(E)}\over h}
-{\p {\widehat K_r}{E}}(E)\varphi\Big)(x,\omega)\\
&
=
\sum_{\beta\leq \alpha}{\alpha \choose\beta}\int_{S'}\Big( \partial_x^{\alpha-\beta}\big({{\hat\sigma^2(x,\omega',\omega,E+h)-\hat\sigma^2(x,\omega',\omega,E)}\over h}-
{\p {\hat \sigma^2}{E}}\big)(x,\omega',\omega,E)\Big)
(\partial_x^\beta\varphi)(x,\omega') d\omega'.
\eea
This implies due to estimates like \eqref{nis-10a} and \eqref{nis-10} that
\bea\label{nis-21}
&
\n{\partial_x^\alpha\Big({{k_\varphi(E+h)-k_\varphi(E)}\over h}
-{\p {\widehat K_r}{E}}(E)\varphi\Big)}_{L^2(G\times S)}^2\nonumber\\
&
\leq 
C_\alpha
\sum_{\beta\leq \alpha}{\alpha \choose\beta}^2
\n{ \partial_x^{\alpha-\beta}\big({{\hat\sigma^2(\cdot,\cdot,\cdot,E+h)-\hat\sigma^2(\cdot,\cdot,\cdot,E)}\over h}-
{\p {\hat \sigma^2}{E}}(\cdot,\cdot,\cdot,E)\big)}_{L^\infty(G\times S,L^1(S'))}\nonumber\\
&
\cdot
\n{\partial_x^{\alpha-\beta}\big({{\hat\sigma^2(\cdot,\cdot,\cdot,E+h)-\hat\sigma^2(\cdot,\cdot,\cdot,E)}\over h}-
{\p {\hat \sigma^2}{E}}(\cdot,\cdot,\cdot,E)\big)}_{L^\infty(G\times S',L^1(S))}
\n{\partial_x^\beta\varphi}_{L^2(G\times S)}^2 
\eea
and so we find by the assumption (\ref{nis-19}) that 
\[
{{k_\varphi(E+h)-k_\varphi(E)}\over h}\longto_{h\to 0} \p{\widehat K_r}{E}(E)\varphi\quad {\rm in}\  H^{(m,0)}(G\times I).
\]
Thus the derivative of $k_\varphi$ is  ${{dk_\varphi}\over {dE}}(E)={\p {\widehat K_r}{E}}(E)\varphi$ for every $E\in I$.
In addition, by the assumption \eqref{nis-19} and by an estimate like \eqref{nis-21} we see that ${{dk_\varphi}\over {dE}}$ is continuous $I\to H^{(m,0)}(G\times S)$ which completes the proof.

\end{proof}

With the above notations and results at hand, we are ready to state the next regularity result.

\begin{theorem}\label{evoth1}
Suppose that   
\bea
&
\Sigma\in C^1(I,\s W^{\infty,(m,0)}(G\times S)),\label{ecsd6a-b} \\
& 
a\in C^1(I,\s W^{\infty,m}(G)),\label{ecsd6a-a-b}\\
&
-a(x,E)\geq \kappa>0\quad {\rm a.e.},\label{nis-16-b}\\
&
\sigma^2\in \s W_0^{\infty,(m,0,0)}(G\times S\times I,L^1(S'),\Gamma_-)\nonumber\\
&
\quad\quad \cap 
C^1(I,W^{\infty,(m,0)}(G\times S,L^1(S')))\cap
C^1(I,W^{\infty,(m,0)}(G\times S',L^1(S))).
\eea
Then for any $f\in C^1(I, H_0^{(m,0)}(G\times S,\Gamma_-'))$ the problem
\be\label{nis-22}
a{\p \psi{E}}
+\omega\cdot\nabla_x\psi+\Sigma\psi-K_{r}\psi=f,\quad \psi_{|\Gamma_-}=0,\quad \psi(\cdot,\cdot,E_m)=0
\ee 
has a unique (strong) solution
$\psi\in C^1(I,H_0^{(m,0)}(G\times S,\Gamma_-'))$.
\end{theorem}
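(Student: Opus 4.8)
The plan is to recognize \eqref{nis-22} as an instance of the abstract evolution equation of Theorem \ref{evoth} and to invoke that theorem with the Hilbert space $X:=H_0^{(m,0)}(G\times S,\Gamma_-')$ and the family $A(E):=\widetilde Q_{C,m}(E)$, where $C>C_0$ and $C_0$ is the constant from Lemma \ref{csdale1}. First I would recall that, since $-a\geq\kappa>0$ by \eqref{nis-16-b}, one may divide \eqref{nis-22} by $a$ and then apply the change of variables $E\mapsto E_m-E$ together with the change of unknown $\phi:=e^{CE}\tilde\psi$, $\tilde\psi(x,\omega,E):=\psi(x,\omega,E_m-E)$, exactly as in \eqref{ecsd2}; this turns \eqref{nis-22} into the equivalent problem \eqref{ecsd6}, namely $\partial_E\phi-\widetilde Q_{C,m}(E)\phi={\rm f}(E)$ with $\phi(0)=0$ and ${\rm f}$ as in \eqref{eq:rm_f:1}. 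Conversely, a solution $\phi\in C(I,D)\cap C^1(I,X)$ of \eqref{ecsd6} yields, via $\psi(x,\omega,E):=e^{-C(E_m-E)}\phi(x,\omega,E_m-E)$, a function $\psi\in C^1(I,H_0^{(m,0)}(G\times S,\Gamma_-'))$ with $\psi(E)\in D\subset H_0^{(m,0)}(G\times S,\Gamma_-')$ for every $E$ (hence $\psi_{|\Gamma_-}=0$) and $\psi(\cdot,\cdot,E_m)=\phi(\cdot,\cdot,0)=0$, which solves \eqref{nis-22} strongly; uniqueness of $\psi$ reduces to that of $\phi$. So everything comes down to checking the hypotheses (i)--(iv) of Theorem \ref{evoth} for $A(E)=\widetilde Q_{C,m}(E)$.

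Hypotheses (i) and (ii) are essentially already available. Lemma \ref{csda-le1} shows that the domain $D:=D(\widetilde Q_{C,m}(E))$ is independent of $E$, and it is dense in $X$ because $\widetilde Q_{C,m}(E)$ is by definition the closure of an operator with domain the dense subspace $\cap_{k=1}^\infty C_0^{(k,0)}(\ol G\times S,\Gamma_-')$; Lemma \ref{csdale1} gives exactly the $m$-dissipativity required in (ii) for $C>C_0$. For (iv) I would note that $\phi(0)=0\in D$ trivially, and that ${\rm f}\in C^1(I,X)$ follows from $\hat f\in C^1(I,X)$ (the hypothesis $f\in C^1(I,H_0^{(m,0)}(G\times S,\Gamma_-'))$ being preserved under the affine substitution $E\mapsto E_m-E$) together with the fact that multiplication by the scalar function $-e^{CE}/\hat a(E)$ is a $C^1$ map from $I$ into the bounded linear operators on $X$; here one uses \eqref{ecsd6a-a-b} and \eqref{nis-16-b}, so that $E\mapsto 1/\hat a(E)$ is $C^1$ into the Banach algebra $\s W^{\infty,m}(G)$, and the boundedness on $X$ of multiplication by elements of $\s W^{\infty,m}(G)$ (cf.\ \eqref{li}, \eqref{csda-4}).

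The real work is hypothesis (iii): for each fixed $u\in D$ the map $E\mapsto\widetilde Q_{C,m}(E)u=\frac1{\hat a(E)}\omega\cdot\nabla_x u+\frac1{\hat a(E)}\widehat\Sigma(E)u+Cu-\frac1{\hat a(E)}\widehat K_r(E)u$ should be $C^1$ from $I$ into $X$. For $u\in D$ the characterization in Lemma \ref{csda-le1} gives $\omega\cdot\nabla_x u\in X$, so $E\mapsto\frac1{\hat a(E)}(\omega\cdot\nabla_x u)$ is $C^1$ into $X$ as the product of the $C^1$ operator-valued map given by multiplication by $1/\hat a(E)$ with the fixed vector $\omega\cdot\nabla_x u$; the term $\frac1{\hat a(E)}\widehat\Sigma(E)u$ is treated in the same way using \eqref{ecsd6a-b}. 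For the collision term, Lemma \ref{le:K_C1} shows $E\mapsto\widehat K_r(E)u$ is $C^1$ into $H^{(m,0)}(G\times S)$ with derivative $\p{\widehat K_r}{E}(E)u$; one then has to check that both $\widehat K_r(E)$ and $\p{\widehat K_r}{E}(E)$ actually land in the subspace $H_0^{(m,0)}(G\times S,\Gamma_-')$, which follows from the support condition encoded in $\sigma^2\in\s W_0^{\infty,(m,0,0)}(G\times S\times I,L^1(S'),\Gamma_-)$ by an approximation argument like Part B of the proof of Theorem \ref{el-k-b}. I expect this last point --- that differentiating the kernel $\sigma^2$ in $E$ does not spoil the vanishing near $\Gamma_-'$, so that $\p{\widehat K_r}{E}(E)$ maps into $H_0^{(m,0)}(G\times S,\Gamma_-')$ --- to be the main obstacle, since it is exactly where the two separate hypotheses on $\sigma^2$ (the $\Gamma_-$-support condition and the $C^1$-in-$E$ condition) must be reconciled.

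Once (i)--(iv) are in place, Theorem \ref{evoth} produces a unique $\phi\in C(I,D)\cap C^1(I,X)$ solving \eqref{ecsd6}, and undoing the substitutions as in the first paragraph delivers the unique strong solution $\psi\in C^1(I,H_0^{(m,0)}(G\times S,\Gamma_-'))$ of \eqref{nis-22}, completing the argument.
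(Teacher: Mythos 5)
Your proposal is correct and follows essentially the same route as the paper: divide by $a$, apply the substitutions \eqref{ecsd2} to reduce to the evolution problem \eqref{ecsd6}, verify hypotheses (i)--(iv) of Theorem \ref{evoth} via Lemmas \ref{csda-le1}, \ref{csdale1} and \ref{le:K_C1}, and undo the substitution. The subtle point you flag --- that $\p{\widehat K_r}{E}(E)$ must land in the subspace $H_0^{(m,0)}(G\times S,\Gamma_-')$ --- is indeed left implicit in the paper's proof, and your proposed resolution via an approximation argument as in Part B of Theorem \ref{el-k-b} is the right one.
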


\begin{proof}
Let $f\in C^1(I, H_0^{(m,0)}(G\times S,\Gamma_-'))$ and define ${\rm f}(E)$ as in \eqref{eq:rm_f:1}.
The assumptions made on $a$ imply that ${\rm f}\in C^1(I,H_0^{(m,0)}(G\times S,\Gamma_-'))$.

Choose a constant $C_0$ as in Lemma \ref{csdale1}
and let $C>C_0$.
We make the change of variables and the change of unknown function as above by setting $\tilde\psi(x,\omega,E)=\psi(x,\omega,E_m-E)$ and $\phi=e^{CE}\tilde\psi$.
Consider the problem (see \eqref{ecsd6})
\be\label{ecsd7}
{\p {\phi}E}-\widetilde Q_{C,m}(E)\phi={\rm f}(E),\quad \phi(0)=0,
\ee
Due to Lemma \ref{csda-le1} the domain $D(\widetilde Q_{C,m}):=D(\widetilde Q_{C,m}(E))$  is independent of $E$.
Moreover by Lemma \ref{csdale1} the (densely defined and closed) operator
$\widetilde Q_{C,m}(E):H_0^{(m,0)}(G\times S,\Gamma_-')\to H_0^{(m,0)}(G\times S,\Gamma_-')$ is $m$-dissipative for any fixed $E\in I$. 

The assumptions imply that for any fixed $\varphi\in D(\widetilde Q_{C,m})$
the mapping
\[
h_{\varphi}:I\to H_0^{(m,0)}(G\times S,\Gamma_-');\quad h_\varphi(E):=\widetilde Q_{C,m}(E)\varphi,
\]
is differentiable and its derivative is
\[
{{dh_\varphi}\over {dE}}(E)=&{\partial\over{{\partial E}}}\Big({1\over{\hat a(E)}}\Big)\omega\cdot \nabla_x\varphi
+{\partial\over{{\partial E}}}\Big({1\over{\hat a(E)}}\widehat\Sigma(E)\Big)\varphi
 \\
&
-{\partial\over{{\partial E}}}\Big({1\over{\hat a(E)}}\Big)\widehat K_r(E)\varphi
-{1\over{\hat a(E)}}{\p {\widehat K_r}{E}}(E)\varphi,
\]
where ${\p {\widehat K_r}{E}}(E)\varphi$ is defined in \eqref{K1},
and the derivative $\pa{E}(\widehat{K}_r(E)\varphi)={\p {\widehat  K_r}{E}}(E)\varphi$ 
is provided by Lemma \ref{le:K_C1}.
By assumptions on $\Sigma,\ a$ and $\sigma^2$ we thus see that $h_\varphi$ is in $C^1(I,H_0^{(m,0)}(G\times S,\Gamma_-'))$.

By Theorem \ref{evoth}
there exists a unique solution $\phi\in C(I,D(\widetilde Q_{C,m}))\cap C^1(I,H_0^{(m,0)}(G\times S,\Gamma_-'))$ of (\ref{ecsd7}).
Then $\psi(x,\omega,E):=e^{-C(E_m-E)}\phi(E_m-E)(x,\omega)$ is the desired (strong) solution of 
the problem \eqref{nis-22}. This completes the proof.
\end{proof}

\begin{remark}\label{nis-re4}
Consider the problem with non-zero inflow data
\be\label{nis-22-e}
a{\p \psi{E}}
+\omega\cdot\nabla_x\psi+\Sigma\psi-K_{r}\psi=f,\quad \psi_{|\Gamma_-}=g,\quad \psi(\cdot,\cdot,E_m)=0,
\ee 
where $g\in C^2(I,T^2(\Gamma_-'))$ such that the compatibility condition 
\be\label{cc}
g(\cdot,\cdot,E_m)=0.
\ee
holds.
Then the lift $L_-g\in C^2(I,\widetilde W^2(G\times S))$ for which $(L_-g)_{|\Gamma_-}=g$, and $(L_-g)(\cdot,\cdot,E_m)=0$ (follows from (\ref{cc})),
and furthermore $\omega\cdot\nabla_x(L_-g)=0$. Substituting $u:=\psi-L_-g$ for $\psi$ we obtain the following problem for $u$,
\begin{gather}
a{\p {u}E}+\omega\cdot\nabla_x u+\Sigma u
- K_ru = h,\nonumber\\
u_{|\Gamma_-}=0,\nonumber\\
u(\cdot,\cdot,E_m)=0,\label{ecsd8}
\end{gather}
where
\[
h:=f-\Big(a{\p {(L_-g)}E}+\Sigma (L_-g)-K_r(L_-g)\Big).
\]
 
Supposing that $a,\Sigma,\sigma^2$ obey the assumptions of Theorem \ref{evoth1} and that
\[
h\in C^1(I, H_0^{(m,0)}(G\times S,\Gamma_-'))
\]
we are able (similarly to Remark \ref{nis-co1}) to seek sufficient conditions under which
\[
u\in C^1(I,H_0^{(m,0)}(G\times S,\Gamma_-'))
\]
which implies that
\[
\psi\in C^1(I,H_0^{(m,0)}(G\times S,\Gamma_-'))+L_-g.
\]
\end{remark}

\begin{example}\label{csda-ex1}

Consider the  problem 
\[
-{\p {\psi}E}+\omega\cdot\nabla_x\psi+
\Sigma\psi
= f,\ \
\psi_{|\Gamma_-}=0,\quad 
\psi(\cdot,\cdot,E_m)=0,
\]
where $\Sigma$ is constant.
The solution is (\cite[section 10]{tervo18-up}, \cite[pp. 226-235]{dautraylionsv6})
\[
\psi(x,\omega,E)=\int_0^{\min\{E_m-E,\, t(x,\omega)\}} e^{-\Sigma s} f(x-s\omega,\omega,E+s)ds
\]
As in Lemma \ref{nis-le2} above, we see that for
$f\in C(\ol G\times S\times I)$ such that ${\rm supp}(f)\cap (\Gamma_- \cup\Gamma_0)=\emptyset$ the solution $\psi$ satisfies
${\rm supp}(\psi)\cap (\Gamma_- \cup\Gamma_0)=\emptyset$. Moreover, the solution $\psi$ can be seen to belong to
$C(I,H_0^{(m,0)}(G\times S,\Gamma_-'))$
by explicit computations for this kind of $f$ (see the proof of Theorem \ref{nis-th1}, Parts A.1-A.2.).
\end{example}

\vskip3mm

\begin{remark}\label{comp-re}

We end this section with the following remarks concerning for compatibility conditions and generalizations of the above results.
The initial boundary value problems must satisfy certain compatibility conditions in order to obey higher order regularity. Suppose that $K_r=K_r^2$ (see \eqref{def:K_r^2:1}).
Let us analyse the problem
\[
&
a(x,E){\p \psi{E}}
+\omega\cdot\nabla_x\psi+\Sigma\psi-K_{r}\psi=f,\\
&
{\psi}_{|\Gamma_-}=g,\ 
\psi(\cdot,\cdot,E_m)=0
\]
or equivalently
\bea
&
{\p \psi{E}}
-P(x,\omega,E,D)\psi=F \ {\rm in}\ G\times S\times I^\circ, \label{gen-10a}
\\
&
{\psi}=g \ {\rm on }\ \Gamma_-,\label{gen-10b}
\\
&
\psi(\cdot,\cdot,E_m)=0\ {\rm in}\ G\times S, \label{gen-10c}
\eea
where
\[
&
P(x,\omega,E,D)\psi
:=-{1\over{a(x,E)}}\big( \omega\cdot\nabla_x\psi+\Sigma(x,\omega,E)\psi-K_r\psi\big),\\
& 
F:={1\over{a(x,E)}}f.
\]

Suppose that $\psi\in H^{(m_1,m_2,m_3)}(G\times S\times I)$ for large enough $m_j,\ j=1,2,3$
(so that the formal computations given below are legitimate up to the boundary). 
The initial and boundary conditions (\ref{gen-10c}), (\ref{gen-10b}) imply
\be\label{gen-11}
g(\cdot,\cdot,E_m)=0\ {\rm on}\ \Gamma'_-
\ee
which is a \emph{zeroth order compatibility condition}.

Furthermore, by (\ref{gen-10b}), (\ref{gen-10a})
\[
{\p g{E}}(\cdot,\cdot,E_m)={\p \psi{E}}(\cdot,\cdot,E_m)
=\big(P(x,\omega,E,D)\psi\big)(\cdot,\cdot,E_m)+F(\cdot,\cdot,E_m)\  {\rm on}\ \Gamma'_-,
\]
where by (\ref{gen-10c})
\[
&
\big(P(x,\omega,E,D)\psi\big)(\cdot,\cdot,E_m)\\
&
=-{1\over{a(x,E_m)}}\big(
\omega\cdot\nabla_x\psi(\cdot,\cdot,E_m)+\Sigma(\cdot,\cdot,E_m)\psi(\cdot,\cdot,E_m)-(K_r\psi)(\cdot,\cdot,E_m)\big)
=0
\]
on $G\times S$ and hence on $\Gamma'_-$.
Therefore
\[
{\p g{E}}(\cdot,\cdot,E_m)=F(\cdot,\cdot,E_m)\ {\rm on}\ \Gamma'_-
\]
which is a \emph{first order compatibility condition}.

By similar computations,
we find a \emph{second order compatibility condition} 
\[
{\q g{E}}(\cdot,\cdot,E_m)=\big(P(x,\omega,E,D)F\big)(\cdot,\cdot,E_m)+{\p F{E}}(\cdot,\cdot,E_m)\ {\rm on}\ \Gamma'_-.
\]
In analogous way we are able to compute higher order compatibility conditions
(cf. \cite{audiard23}, \cite[Introduction]{rauch74}). These conditions must be fulfilled
up to order $m_3-1$ when the solution has $E$-regularity up to order $m_3$.

\begin{example}
For example, consider the following case.
Let $m=(m_1,m_2,m_3)\in \N_0^3$ and let
(compare with \eqref{eq:def:C^k_ol_G_Gamma_-:1})
\[
C_0^{k}(\ol G\times S\times I,\Gamma_-\cup\{E_m\})
\!:=\!{}& \{ \psi\in C^k(\ol G\times S\times I)\, |\, {\rm supp}(\psi)\cap \big(\Gamma_- \cup\Gamma_0\cup (\ol G\times S\times \{E_m\})\big)=\emptyset\}.
\]
Let $ H_0^{(m_1,m_2,m_3)}(G\times S\times I,\Gamma_-\cup\{E_m\})$ be the completion of 
$\cap_{k=1}^\infty C_0^k(\ol G\times S\times I,\Gamma_-\cup\{E_m\})$ with respect
to the inner product $\la \psi,v\ra_{ H^{(m_1,m_2,m_3)}(G\times S\times I)}$
(see \eqref{hminner}).
Consider the  problem 
\[
-{\p {\psi}E}+\omega\cdot\nabla_x\psi+
\Sigma\psi
= f,\ \
\psi_{|\Gamma_-}=0,\quad 
\psi(\cdot,\cdot,E_m)=0,
\]
where $\Sigma$ is constant.
As mentioned above in Example \ref{csda-ex1}, the solution is 
\be\label{gen-1}
\psi(x,\omega,E)=\int_0^{\min\{E_m-E,\, t(x,\omega)\}} e^{-\Sigma s} f(x-s\omega,\omega,E+s)ds.
\ee
By direct computations
one finds that for $f\in  H_0^{(m_1+m_2+m_3,m_2,m_3)}(G\times S\times I,\Gamma_-\cup\{E_m\}),\ m_3\geq 1$,
the unique solution $\psi$ is in $\cap_{j=0}^{m_2}\cap_{k=0}^{m_3} H_0^{(m_1+m_2+m_3-j-k,j,k)}(G\times S\times I,\Gamma_-\cup\{E_m\})$.
We note that for $g=0$ and $f\in H_0^{(m_1+m_2+m_3,m_2,m_3)}(G\times S\times I,\Gamma_-\cup\{E_m\})$
the needed compatibility conditions hold up to order $m_3-1$.
In particular, in the situation of Theorem \ref{evoth1} we have $m_3=1$ and the zeroth order compatibility condition holds.
\end{example}

It is expected that 
in spaces $ H_0^{(m_1,m_2,m_3)}(G\times S\times I,\Gamma_-\cup\{E_m\})$ 
a regularity result along the lines of the above example can be proved for more general types of transport equations.
By using appropriate lifts we additionally conjecture that the spaces $H_0^{(m_1,m_2,m_3)}(G\times S\times I,\Gamma_-\cup\{E_m\})$ can be replaced by the larger spaces 
$H_0^{(m_1,m_2,m_3)}(G\times S\times I,\Gamma_-)$.
\end{remark} 

\vskip1cm

\section{Appendix}\label{somele}

We recall the following result concerning the change of derivation and integration

\begin{theorem}\label{ointder}
Let $G\subset\R^n$ be an open set and let $(Y,\mc{F},\mu)$ be a measure space.
Suppose that $f:G\times Y\to \R$ is $\mc{B}(G)\otimes\mc{F}$-measurable mapping such that
the partial derivatives ${{\partial^\alpha f}\over{\partial x^\alpha}}$ exist on $G\times Y$ for $|\alpha|\leq r$ and that there exist $g_\alpha\in L^1(Y,\mc{F},\mu)$ such that
\[
\Big|{{\partial^\alpha f}\over{\partial x^\alpha}}(x,y)\Big|
\leq g_\alpha(y)\quad \forall x\in G,\ y\in Y,\ |\alpha|\leq r.
\]
Then
\be\label{orgs7a}
{{\partial^\alpha }\over{\partial x^\alpha}}\Big(\int_Y f(x,y) \mu(dy)\Big)
=\int_Y{{\partial^\alpha f}\over{\partial x^\alpha}}(x,y) \mu(dy)
\quad \forall x\in G,\ |\alpha|\leq r.
\ee
\end{theorem}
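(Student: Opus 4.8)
The plan is to argue by induction on $|\alpha|$. The case $|\alpha|=0$ is merely the observation that $x\mapsto\int_Y f(x,y)\,\mu(dy)$ is well defined: for fixed $x$ the section $y\mapsto f(x,y)$ is $\mc F$-measurable, being a section of a $\mc B(G)\otimes\mc F$-measurable map, and it is dominated by $g_0\in L^1(Y,\mc F,\mu)$. For the inductive step I would reduce to the case $r=1$, i.e.\ $\alpha$ equal to a standard basis vector $e_j$. Indeed, if $|\alpha|=k+1$ I write $\partial_x^\alpha=\pa{x_j}\partial_x^\beta$ with $|\beta|=k$, note that $\partial_x^\beta f$, together with its first-order $x$-derivatives $\partial_x^{\beta+e_i}f$, satisfies the hypotheses of the theorem with $r=1$ and dominating functions $g_\beta,g_{\beta+e_i}$, apply the induction hypothesis to obtain $\partial_x^\beta\int_Y f\,\mu(dy)=\int_Y\partial_x^\beta f\,\mu(dy)$, and then apply the $r=1$ statement to the function $\partial_x^\beta f$ to differentiate once more in $x_j$.

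For the case $r=1$, I would fix $x\in G$ and $j$, choose $\delta>0$ with $x+te_j\in G$ for all $|t|<\delta$ (possible since $G$ is open), and for $0<|h|<\delta$ write the difference quotient of $F(x):=\int_Y f(x,y)\,\mu(dy)$ as $\int_Y\big(f(x+he_j,y)-f(x,y)\big)h^{-1}\,\mu(dy)$. The mean value theorem applied to the one-variable function $t\mapsto f(x+te_j,y)$ produces, for each $y$, a point $\theta=\theta(h,y)\in(0,1)$ with $\big(f(x+he_j,y)-f(x,y)\big)h^{-1}=\p{f}{x_j}(x+\theta h e_j,y)$; hence the integrand is bounded in absolute value by $g_{e_j}\in L^1(Y,\mc F,\mu)$ uniformly in $h$, and it converges pointwise in $y$ to $\p{f}{x_j}(x,y)$ as $h\to0$, since that derivative exists by hypothesis. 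The dominated convergence theorem then gives $\lim_{h\to0}\big(F(x+he_j)-F(x)\big)h^{-1}=\int_Y\p{f}{x_j}(x,y)\,\mu(dy)$, which is precisely \eqref{orgs7a} for $\alpha=e_j$.

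The remaining work is the routine measurability bookkeeping. For each fixed $h$ the map $y\mapsto\big(f(x+he_j,y)-f(x,y)\big)h^{-1}$ is $\mc F$-measurable, being built from sections of $f$; consequently $y\mapsto\p{f}{x_j}(x,y)$ is $\mc F$-measurable as the pointwise limit of such maps along a sequence $h_k\to0$, and the same argument shows that each $\partial_x^\alpha f$ is jointly $\mc B(G)\otimes\mc F$-measurable, which is what is needed to feed the higher-order derivatives back into the induction. I do not expect a genuine obstacle here: the hypotheses are arranged so that the dominating functions $g_\alpha$ are independent of $x$, which is exactly what makes the dominated convergence step legitimate without localizing in $x$, and the induction on $|\alpha|$ is purely organizational. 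The only point requiring mild care is to verify that the domination hypothesis is correctly inherited at each stage of the reduction to single partial derivatives.
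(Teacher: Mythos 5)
Your proof is correct and is essentially the standard argument the paper relies on: the paper gives no proof of its own but cites Folland (p.\ 56), where the first-order case is handled exactly as you do, by the mean value theorem to dominate the difference quotients by $g_{e_j}$ followed by dominated convergence, with the higher-order case obtained by iteration. The measurability bookkeeping you describe (sections of a product-measurable map, pointwise limits along sequences $h_k\to 0$) is the right and complete way to close the remaining gaps.
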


\begin{proof}
For the proof we refer e.g. to \cite{folland}, p. 56.
\end{proof}

\medskip
\subsection{Proof of Lemma \ref{le:basic_geometric_properties:1}}\label{app:le:basic_geometric_properties:1:proof}

\begin{proof}
From the assumptions imposed on the domain $G$,
we deduce that $G$ is open, bounded and convex (since its closure $\ol{G}$ is strictly convex)
and that $\ol{G}$ is a $C^1$-submanifold of $\R^3$ with boundary.
The proof given below only uses these properties of $G$.

\medskip
\noindent
{\bf (i)}:
Pick an open ball neighbourhood $V=B(x,\delta)$, $\delta>0$, of $x$
and a $C^1(V)$-function $r$
such that $G\cap V=\{r<0\}$.
Then $(\partial G)\cap V=\{r=0\}$
and $\nu(x)=\nabla r(x)/\n{\nabla r(x)}$ for all $x\in (\partial G)\cap V$.

For all $0\leq s<\delta$ it holds $x-s\omega\in V$
and $h(s):=\dif{s} r(x-s\omega)=-\omega\cdot (\nabla r)(x-s\omega)$.
Since $h(0)=-\omega\cdot (\nabla r)(x)$
and $\omega\cdot\nu(x)>0$ (since $(x,\omega)\in\Gamma'_+$),
we have $h(0)<0$.
Therefore, there is $0<s_0\leq \delta$ such that
$h(s)<0$ for all $s\in [0,s_0]$.
As $r(x)=0$ since $x\in\partial G$,
it follows that
\[
r(x-s\omega)=r(x-s\omega)-r(x)=\int_0^s h(t)dt<0
\]
for all $0<s\leq s_0$,
that is,
\[
x-s\omega\in \{r<0\}=G\cap V\subset G\quad \forall s\in [0,s_0].
\]

This shows that there exists $s_0>0$ such that
$x-s\omega\in G$ for all $0<s\leq s_0$.
From the definition of $\tau_+$ in \eqref{def:tau_pm:1},
we deduce that $\tau_+(x,\omega)\geq s_0>0$.
This completes the proof of \emph{(i)}.

\medskip
\noindent
{\bf (ii)}:
This follows by the same arguments as \emph{(i)}.

\medskip
\noindent
{\bf (iii)}:
Like in case \emph{(i)},
pick an open ball neighbourhood $B(x,\delta)$, $\delta>0$, of $x$
and a $C^1(V)$-function $r$ such that
$G\cap V=\{r<0\}$,
$(\partial G)\cap V=\{r=0\}$
and $\nu(x)=\nabla r(x)/\n{\nabla r(x)}$ for all $x\in (\partial G)\cap V$.

Let $z\in G$. Since $x,z\in \ol{G}$ and $\ol{G}$ is convex,
we have $(1-t)x+tz=x+t(z-x)\in \ol{G}$ for all $t\in [0,1]$
and hence $r(x+t(z-x))\leq 0$ for all $t\in [0,1]$.
Since $x\in\partial G$, we have $r(x)=0$
and therefore
\[
\frac{r(x+t(z-x)) - r(x)}{t}=\frac{r(x+t(z-x))}{t}\leq 0
\]
for all $0<t\leq 1$. In the limit $t\to 0+$ this implies
that
\[
(\nabla r)(x)\cdot (z-x)=\dif{t}\big|_{t=0} r(x+t(z-x))\leq 0.
\]
We have thus shown that
$(\nabla r)(x)\cdot (z-x)\leq 0$ for all $z\in G$
and hence
\begin{align}\label{eq:le:basic_geometric_properties:1:1}
\nu(x)\cdot (z-x)\leq 0\quad \forall z\in G.
\end{align}

Suppose now that there exists $z_0\in G$
for which $\nu(x)\cdot (z_0-x)=0$.
Since $G$ is open, we have $B(z_0,\delta')\subset G$ for some small $\delta'>0$
and hence by \eqref{eq:le:basic_geometric_properties:1:1},
\[
\nu(x)\cdot (z_0+h-x)\leq 0\quad \forall \n{h}<\delta',
\]
i.e.,
since $\nu(x)\cdot (z_0-x)=0$,
\[
\nu(x)\cdot h\leq 0\quad \forall \n{h}<\delta',
\]
which is clearly absurd
(for instance, if we take $h=\frac{\delta'}{2}\nu(x)$,
we have $\n{h}=\frac{\delta'}{2}<\delta'$
and hence on one hand
$\nu(x)\cdot h\leq 0$,
while on the other hand
$\nu(x)\cdot h=\frac{\delta'}{2}\nu(x)\cdot \nu(x)>0$).
This contradiction proves that there is no $z_0\in G$
for which $\nu(x)\cdot (z_0-x)=0$.

From this and \eqref{eq:le:basic_geometric_properties:1:1}
we deduce that $\nu(x)\cdot (z-x)<0$ for all $z\in G$.
This completes the proof of \emph{(iii)}.

\medskip
\noindent
{\bf (iv)}:
If $(x,\omega)\in G\times S$,
then by the fact that $G$ is open convex and bounded,
there exists $s_0>0$ such that
$x-s\omega\in G$ for all $0<s<s_0$
and $y:=x-s_0\omega\in\partial G$ (we omit details).
On the other hand,
if $(x,\omega)\in\Gamma'_+$
then it follows from \emph{(i)}
that there exists $s'_0>0$ such that
$x-s\omega\in G$ for all $0<s\leq s_0'$.
Letting $x':=x-s_0'\omega\in G$,
we have $(x',\omega)\in G\times S$
and thus by the above,
there exists $s''_0>0$ such that
$x'-s\omega\in G$ for all $0\leq s<s''_0$
and $y:=x'-s_0''\omega\in\partial G$.
It follows that
$y=(x-s_0'\omega)-s_0''\omega=x-s_0\omega$
with $s_0:=s_0'+s_0''>0$
and $x-s\omega\in G$ for all $0<s<s_0$.

Thus so far, we have shown the following:
Regardless whether $(x,\omega)\in G\times S$ or $(x,\omega)\in\Gamma'_+$,
there exists $s_0>0$
such that $x-s\omega\in G$ for all $0<s<s_0$
and $y:=x-s_0\omega\in\partial G$.

Since $x=y+s_0\omega$ and thus $y+(s_0-s)\omega=x-s\omega\in G$ for all $0<s<s_0$,
it follows from case \emph{(iii)} that $\nu(y)\cdot ((s_0-s)\omega)<0$
for all $0<s<s_0$.
This implies that
$\omega\cdot\nu(y)<0$
i.e. $(y,\omega)\in\Gamma'_-$.

Finally, as $x-s\omega\in G$ for $0<s<s_0$ and $x-s_0\omega=y\in\partial G$,
we deduce that $\tilde{t}(x,\omega)=s_0>0$
and hence $x=y+\tilde{t}(x,\omega)\omega$.
The proof of \emph{(iv)} is therefore complete.

\end{proof}

\medskip
\subsection{Proof of Proposition \ref{pr:continuity_of_tilde_t:1}}\label{app:pr:continuity_of_tilde_t:1:proof}

\begin{proof}
Let $(x_0,\omega_0)\in G\times S$ and write $y_0:=x_0-t(x_0,\omega_0)\omega_0$.
According to (iv) of Lemma \ref{le:basic_geometric_properties:1}, we have $(y_0,\omega_0)\in\Gamma'_-$,
that is, $\omega_0\cdot\nu(y_0)<0$
and therefore by Proposition 4.7 in \cite{tervo17-up} the escape time map $t:G\times S\to\R_+$
is continuously differentiable on a neighbourhood of $(x_0,\omega_0)$.
This proves that $t\in C^1(G\times S)$.
In particular, $t$ is continuous on $G\times S$
(see also Lemma 2.5 in \cite{tervo18-up}).

Since $\tilde{t}=0$ on $\Gamma'_-\cup\Gamma'_0$ (see \eqref{olt}),
it follows from (ii) of Lemma \ref{le:tilde_t_bounded_from_below:1} (see below)
that $\tilde{t}$ is continuous at points of $\Gamma'_-\cup\Gamma'_0$.
Because also $\tilde{t}|_{G\times S}=t$ is continuous at points of $G\times S$,
we conclude that
$\tilde{t}$ is continuous on $(G\times S)\cup\Gamma'_-\cup\Gamma'_0$.

Let $(x_0,\omega_0)\in \Gamma'_+$ for the remainder of the proof.
By (iv) of Lemma \ref{le:basic_geometric_properties:1},
it holds $(x_0-\tau_+(x_0,\omega_0)\omega_0,\omega_0)=(x_0-\tilde{t}(x_0,\omega_0)\omega_0,\omega_0)\in\Gamma'_-$.
Hence by Lemma 2.5 in \cite{tervo18-up}, the map $t$ has the limit
\be\label{aa-1}
\lim_{G\times S\ni (x,\omega)\to (x_0,\omega_0)} t(x,\omega)=\tau_+(x_0,\omega_0)
\ee
that is $\tilde{t}(x,\omega)\to \tilde{t}(x_0,\omega_0)$
when $(x,\omega)$ tends to $(x_0,\omega_0)$ from $G\times S$.

Thus, in order to conclude that $\tilde{t}$ is continuous on all of $\ol{G}\times S$,
it remains to show that $\tau_+$ is continuous on $\Gamma'_+$.

Notice that if $(y,\omega)\in \Gamma'_+$
then from (i) of Lemma \ref{le:basic_geometric_properties:1},
we have $\tau_+(y,\omega)>0$
and hence the definitions of $t$ and $\tau_+$
(see \eqref{def:t:1}, \eqref{def:tau_pm:1})
imply that $y-s\omega\in G$ and
$\tau_+(y,\omega)=s+t(y-s\omega,\omega)$
for all $0<s<\tau_+(y,\omega)$.

Let $(x_k,\omega_k)$, $k=1,2,\dots$, be a sequence in $\Gamma'_+$
converging to $(x_0,\omega_0)$.
If the sequence $\tilde{t}(x_k,\omega_k)$, $k=1,2,\dots$,
were \emph{not} bounded from below by a constant $c>0$,
then it would have a subsequence $\tilde{t}(x_{k_i},\omega_{k_i})$, $i=1,2,\dots$,
converging to $0$
and hence (ii) of Lemma \ref{le:tilde_t_bounded_from_below:1}
would imply that $(x_0,\omega_0)\in\Gamma'_-\cup\Gamma'_0$
which is absurd.

This shows that there must exist a constant $c>0$
such that $\tilde{t}(x_k,\omega_k)\geq c$ for all $k=1,2,\dots$
Since $\tau_+(x_0,\omega_0)>0$ by (i) of Lemma \ref{le:basic_geometric_properties:1}, we may assume that $0<c<\tau_+(x_0,\omega_0)$
Then for all $0<s<c$ and all $k$,
we have $x_k-s\omega_k\in G$ and
\[
\tau_+(x_k,\omega_k)=s+t(x_k-s\omega_k,\omega_k)
\longto_{k\to\infty} s+t(x_0-s\omega_0,\omega_0)
=\tau_+(x_0,\omega_0),
\]
where the limit exists and equals the right-hand side because of \eqref{aa-1}
and $(x_k,\omega_k)\to (x_0,\omega_0)$ as $k\to\infty$
(note that $0<s<\tau_+(x_0,\omega_0)$ by our choice of $c$
and hence $(x_0-s\omega_0,\omega_0)\in G$).

The above argument shows that
if $(x_0,\omega_0)\in \Gamma'_+$
and if $(x_k,\omega_k)$, $k=1,2,\dots$, be a sequence in $\Gamma'_+$
converging to $(x_0,\omega_0)$,
then $\tau_+(x_k,\omega_k)=\tilde{t}(x_k,\omega_k)\to \tilde{t}(x_0,\omega_0)=\tau_+(x_0,\omega_0)$
as $k\to\infty$.
In other words, $\tilde{t}$ is also continuous on points of $\Gamma'_+$.

As mentioned above, this completes the proof that $\tilde{t}$
is continuous on $\ol{G}\times S$.
\end{proof}

\begin{lemma}\label{le:tilde_t_bounded_from_below:1}
Suppose that $(y,\omega)\in\partial G\times S$
and $(x_k,\omega_k)$, $k=1,2,\dots$,
is a sequence in $\ol{G}\times S$ converging to $(y,\omega)$.
\begin{itemize}
\item[(i)]
If the sequence $\tilde{t}(x_k,\omega_k)$, $k=1,2,\dots$,
is bounded from below by some constant $c>0$
then $(y,\omega)\in\Gamma'_+$ and $(x_k,\omega_k)\in (G\times S)\cup\Gamma'_+$ for all $k=1,2,\dots$.
\item[(ii)] We have $(y,\omega)\in\Gamma'_-\cup\Gamma'_0$
if and only if $\tilde{t}(x_k,\omega_k)\to 0$ as $n\to\infty$.
\end{itemize}
\end{lemma}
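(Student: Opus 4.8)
The plan is to establish \emph{(i)} by a direct geometric argument and then deduce both implications of \emph{(ii)} from it, together with one short extra argument. For \emph{(i)}, suppose $\tilde t(x_k,\omega_k)\ge c>0$ for all $k$. Since $\tilde t$ vanishes on $\Gamma'_-\cup\Gamma'_0$ by the definition \eqref{olt}, none of the $(x_k,\omega_k)$ can lie there; as $\ol G\times S$ is the disjoint union $(G\times S)\cup\Gamma'_-\cup\Gamma'_0\cup\Gamma'_+$, this already proves the second assertion, namely $(x_k,\omega_k)\in(G\times S)\cup\Gamma'_+$ for every $k$. For the first assertion I would use that on $(G\times S)\cup\Gamma'_+$ one has $\tilde t(x,\omega)=\inf\{s>0\ |\ x-s\omega\notin G\}$ by \eqref{def:t:1} and \eqref{def:tau_pm:1}, so $\tilde t(x_k,\omega_k)\ge c$ forces $x_k-s\omega_k\in G$ for all $0<s<c$ and all $k$; letting $k\to\infty$ and using closedness of $\ol G$ gives $y-s\omega\in\ol G$ for $0<s<c$, hence also $y-c\omega\in\ol G$.

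The key step is then to promote this to $y-s\omega\in G$ for $0<s<c$: since $y\in\partial G\subset\ol G$ and $y-c\omega\in\ol G$ are two distinct points (as $c>0$) of the \emph{strictly convex} set $\ol G$, the open segment joining them lies in $G$. Applying Lemma \ref{le:basic_geometric_properties:1}(iii) at the boundary point $y$ with $z:=y-s\omega\in G$ then gives $\nu(y)\cdot(z-y)=-s\,\omega\cdot\nu(y)<0$, so $\omega\cdot\nu(y)>0$, i.e. $(y,\omega)\in\Gamma'_+$; this completes \emph{(i)}.

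For \emph{(ii)}, the implication "$(y,\omega)\in\Gamma'_-\cup\Gamma'_0\ \Rightarrow\ \tilde t(x_k,\omega_k)\to 0$" follows from \emph{(i)}: otherwise some subsequence satisfies $\tilde t(x_{k_i},\omega_{k_i})\ge\varepsilon>0$, and \emph{(i)} applied to that subsequence (it still converges to $(y,\omega)$) forces $(y,\omega)\in\Gamma'_+$, contradicting $\Gamma'_+\cap(\Gamma'_-\cup\Gamma'_0)=\emptyset$. For the converse, assume $\tilde t(x_k,\omega_k)\to 0$. If infinitely many $(x_k,\omega_k)$ lie in $\Gamma'_-\cup\Gamma'_0=\{(x,\omega)\in\partial G\times S\ |\ \omega\cdot\nu(x)\le 0\}$, which is closed in $\ol G\times S$, then the limit lies there and we are done; otherwise we may assume all $(x_k,\omega_k)\in(G\times S)\cup\Gamma'_+$, so Lemma \ref{le:basic_geometric_properties:1}(iv) gives $(z_k,\omega_k)\in\Gamma'_-$ for $z_k:=x_k-\tilde t(x_k,\omega_k)\omega_k$, i.e. $z_k\in\partial G$ and $\omega_k\cdot\nu(z_k)<0$. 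Since $\n{z_k-x_k}=\tilde t(x_k,\omega_k)\to 0$ and $x_k\to y$ we get $z_k\to y$, and continuity of $\nu$ on $\partial G$ yields $\omega_k\cdot\nu(z_k)\to\omega\cdot\nu(y)$, whence $\omega\cdot\nu(y)\le 0$, i.e. $(y,\omega)\in\Gamma'_-\cup\Gamma'_0$.

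The step I expect to be most delicate is the use of strict convexity in \emph{(i)} — upgrading the limiting inclusion $y-s\omega\in\ol G$ to membership in the \emph{open} set $G$, which is precisely where strict convexity of $\ol G$ (rather than merely convexity of $G$) is needed and which then feeds into Lemma \ref{le:basic_geometric_properties:1}(iii); everything else is routine bookkeeping with the definitions of $t$, $\tau_+$ and $\tilde t$ and the continuity of $\nu$ on $\partial G$.
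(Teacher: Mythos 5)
Your proposal is correct and follows essentially the same route as the paper's proof: ruling out $\Gamma'_-\cup\Gamma'_0$ via the vanishing of $\tilde t$ there, passing to the limit to get $y-s\omega\in\ol G$, invoking strict convexity to place the open segment in $G$ and then Lemma \ref{le:basic_geometric_properties:1}(iii) to conclude $\omega\cdot\nu(y)>0$, the subsequence argument for the forward implication of (ii), and the projected boundary points $z_k=x_k-\tilde t(x_k,\omega_k)\omega_k$ together with continuity of $\nu$ for the converse. Your use of strict convexity is marginally cleaner (applying it directly to $y$ and $y-c\omega$ rather than the paper's case split), but this is a cosmetic difference, not a different method.
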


\begin{proof}

\noindent {\bf (i):}
If $(x_0,\omega_0)\in \ol{G}\times S$
is such that $\tilde{t}(x_0,\omega_0)=0$,
it follows from the definition \eqref{olt} of $\tilde{t}$
that $(x_0,\omega_0)\in \Gamma'_-\cup\Gamma'_0$.
That is, if $(x_0,\omega_0)\in \ol{G}\times S$
and $\tilde{t}(x_0,\omega_0)>0$ then $(x_0,\omega_0)\in (G\times S)\cup\Gamma'_+$.

Let $c>0$ be such that $\tilde{t}(x_k,\omega_k)\geq c$ for all $k$.
Then by what we have just discussed, it holds
$(x_k,\omega_k)\in (G\times S)\cup\Gamma'_+$ for all $k$.

The rest of the proof of (i) is divided into three parts.

\medskip
\noindent {\bf Part I:}
If $0<s\leq c$, it then follows from the definition
of $t$ and $\tau_+$ (see \eqref{def:t:1} and \eqref{def:tau_pm:1})
that $x_k-s\omega_k\in G$ for all $k$.
In the limit $k\to\infty$, we have $y-s\omega\in\ol{G}$
for all $0<s\leq c$.

\medskip
\noindent {\bf Part II:}
We claim that there exists $s_1>0$
such that $y-s\omega\in G$ for all $0<s<s_1$.

First, if $y-s\omega\in G$ for all $0<s<c$,
we may choose $s_1:=c$.

On the other hand, if $y-s_0\omega\notin G$ for some $0<s_0<c$,
then because $y-s_0\omega\in\ol{G}$ by \emph{Part I},
it must hold $y-s_0\omega\in \partial G$.
Since also $y\in\partial G$ and because $\ol{G}$ is strictly convex,
so these points of $\partial G$ are its extreme points,
we must have $y-s\omega\in G$ for all $0<s<s_0$.
Hence we may choose $s_1:=s_0$.

This proves the claim.

\medskip
\noindent {\bf Part III:}
By \emph{Part II} there is $s_1>0$ such that
$y-s\omega\in G$ for all $0<s<s_1$.
Therefore, (iii) of Lemma \ref{le:basic_geometric_properties:1}
implies that $\nu(y)\cdot((y-s\omega)-y)<0$
i.e. $-s\omega\cdot \nu(y)<0$ for all $0<s<s_1$.
This means that $\omega\cdot \nu(y)>0$, i.e. $(y,\omega)\in\Gamma'_+$,
and hence the proof of (i) is complete.

\medskip
\noindent {\bf (ii):}
Write $\tilde{t}_k:=\tilde{t}(x_k,\omega_k)\geq 0$, $k=1,2,\dots$,

Let first $(y,\omega)\in\Gamma'_-\cup\Gamma'_0$
Let $\tilde{t}_{k_i}$, $i=1,2,\dots$, be any subsequence of $\tilde{t}_k$, $k=1,2,\dots$
Since $(x_{k_i},\omega_{k_i})\to (y,\omega)$ when $i\to\infty$,
it follows from (i) that $\tilde{t}_{k_i}$, $i=1,2,\dots$,
\emph{cannot} be bounded from below by a constant $c>0$
and therefore it has a further subsequence
$\tilde{t}_{k_{i_n}}$, $n=1,2,\dots$,
converging to $0$.
From this one concludes that the original sequence
$\tilde{t}_k$, $k=1,2,\dots$ converges to $0$.

Conversely, suppose that $(y,\omega)\in\partial G\times S$
and $\tilde{t}_k\to 0$ as $k\to\infty$.
Given $k=1,2,\dots$,
if $(x_k,\omega_k)\in\Gamma'_-\cup\Gamma'_0$
define $\ol{x}_k:=x_k$
so that $\ol{x}_k\in \in\Gamma'_-\cup\Gamma'_0$,
and if $(x_k,\omega_k)\in(G\times S)\cup \Gamma'_+$,
define $\ol{x}_k:=x_k-\tilde{t}_k\omega_k$
so that $(\ol{x}_k,\omega_k)\in\Gamma'_-$
by (iv) of Lemma \ref{le:basic_geometric_properties:1}.
Since $\tilde{t}_k\to 0$,
we see that $(\ol{x}_k,\omega_k)\to (y,\omega)$ as $k\to\infty$.
Moreover, by construction $(\ol{x}_k,\omega_k)\in\Gamma'_-\cup\Gamma'_0$
for all $k$.

That is, $\omega_k\cdot\nu(\ol{x}_k)\leq 0$ for all $k$
and hence $\omega\cdot\nu(y)\leq 0$ in the limit as $k\to\infty$
because $\nu$ is continuous on $\partial G$.
This shows that $(y,\omega)\in\Gamma'_-\cup\Gamma'_0$.
The proof of (ii) is complete.
\end{proof}

\section{Discussion}\label{disc}

In the case where the spatial domain $G\not=\R^3$ only   solutions with limited Sobolev regularity can  be achieved (without essential restrictions on data $(f,g)$) for transport problems  and  the regularity of  solutions   does not increase according to data $f,\ g$. 
The corresponding boundary matrix (here a scalar) has a variable rank on $\Gamma$ and the boundary matrix changes the definiteness on $\Gamma$.
One knows from the general theory of first-order PDE systems that solutions of these kinds of initial boundary value problems possess only limited regularity.
In the case of initial inflow boundary value transport problems, the limit of Sobolev regularity  with respect to $x$-variable  seems to be ${3\over 2}$.
If the transport equation does not contain the slowing down term $a{\p \psi{E}}$ the regularity with respect to $E$-variable is not necessarily limited (see \cite[section 4.1.3]{tervo23}). When the term $c\Delta_S\psi$ is included (not considered in this paper) the basic existence results  (\cite[section 6]{tervo19}) guarantee that
$\psi\in L^2(G\times I,H^1(S))=H^{(0,1,0)}(G\times S\times I)$. The partially elliptic term 
$c\Delta_S\psi$ (that is, elliptic  with respect to $\omega$-variable only) actually smooths the solution and higher order regularity with respect to $\omega$-variable is  possible (this issue needs  further research). When the term $c\Delta_S\psi$ is not included  the regularity with respect to $\omega$-variable is limited as well.

Nevertheless, when one appropriately restrict the data $f,\ g$ the existence of higher order regular solutions is possible. This paper considered one such case where $g=0$ and $f_{|\Gamma_-}=0$ in the Sobolev sense. Our results in section \ref{nis} can be considered as a variant of the results given in \cite{nishitani96} where certain general symmetric first-order partial differential systems $\sum_{j=1}^n A_j(x)\partial_{x_j}u+B(x)u=f$ and the corresponding time-dependent systems are considered in an open bounded domain $\Omega\subset\R^n$. 
In \cite{nishitani96} the boundary matrix changes its definiteness only
when crossing an $(n-2)$-dimensional submanifold of the boundary. This is the situation in our case since the boundary matrix (here a scalar) is
$\sum_{j=1}^3\nu_j A_j=\omega\cdot\nu$ and $\omega\cdot\nu$ changes its sign only on $\Gamma_0'\subset \Gamma'=(\partial G)\times S=\partial(G\times S)$,
where $\Gamma'_0=\{(y,\omega)\in (\partial G) \times S\ |\ \omega\cdot \nu(y)=0\}$ is a $3$-dimensional submanifold of the $5$-dimensional $G\times S$.
Yet the problem we consider and the techniques we use are different.
Note that our model contains the partial integral operator $K_r$. Moreover, our basic domain is of the form $G\times S$, where $S$ is a manifold without boundary.
We apply explicit solution formulas and the smallest closed extensions of transport operators to deduce existence results in higher order Sobolev spaces. In 
\cite{nishitani96} one has applied cutting/weight function techniques and the basic assumption of definiteness (1.2) therein is needed to obtain relevant apriori estimates for the relevant operators. 
Moreover, the existence of solutions is based on the Hahn-Banach theorem.

The regularity of solutions is crucial in the convergence analysis of numerical schemes. Perhaps the most effective methods for transport problems are based on the finite element methods (FEM) in spatial variables.  The error estimates for various (discontinuous) FEM schemes  are also known. 
The transport equation (\ref{i1}) is hyperbolic in nature. So discontinuous FEM schemes are favourable.

There exist some special FEM convergence results for transport problems. For example, in \cite{han} one has considered mono-kinetic (mono-energetic) transport equation $\omega\cdot\nabla_x\psi+\Sigma\psi-K_r\psi=f$. Firstly one  applies the so-called discrete-ordinate scheme to discretize the angular variable $\omega$. The collision term $K_r\psi$ is discretized by using an appropriate numerical quadrature.
As a result one obtains a semi-discretized approximation, say $\psi_{do}$, which obeys a system of first order partial differential equations with respect to spatial variable $x$.
The system is further discretized with respect to $x$ by applying the discontinuous Galerkin method where the (fully discretized) approximation, say $P_h(\psi_{do})$, is the elementwise $L^2$-orthogonal projection of $\psi_{do}$ onto the finite dimensional element  space (that is,
$P_h(\psi_{do})_{|K}=P_k({\psi_{do}}_{|K})$ where $P_k:L^2(K)\to P_k(K)$ is the orthogonal projection). 
The element space $ P_k(K)$ is the set of piecewise polynomials whose degree is less than or equal to $k$ in each element $K$.
The convergence is under control if one assumes the regularity $\psi\in H^{(r,0)}(G\times S),\ r>{1\over 2}$. Actually, one gets the convergence rate $\sim h^{\min\{r,k+1\}-{1\over 2}}$, where $h$ is the maximal diameter of  elements $K$.  
For example, if $k=1$ and $r=1$ one gets the convergence rate $\sim h^{{1\over 2}}$ and if $k=1$ and $r=2$ one gets the convergence rate $\sim h^{{3\over 2}}$. Hence we find that regularity of solutions strongly affects the convergence speed.
Many other similar results can be found in the literature.
The above procedure can be applied to more general equations (\ref{i1-a}). However, the presence of terms $a{\p \psi{E}}$,
$c\Delta_S\psi$ and
$d\cdot\nabla_S\psi$ requires additional treatments.

Coupled systems of equations like (\ref{i1-a}) can be used to model simultaneous propagation of several types of particles (photons, electrons, positrons, etc.) 
in radiotherapy dose calculation (e.g. \cite{hensel}, \cite{frank08}, \cite{frank10},
\cite{tervo17}, \cite{tervo18-up}).
In practice the corresponding initial inflow boundary value problem is solved numerically. The discrete (matrix) equation is high dimensional and so only numerical methods whose rigorous convergence is under control can be used successfully.
When the solution $\psi$ is regular enough the discontinuous FEM methods offer one such approach as part of numerical schemes. Some of today's commercial dose calculation codes already use FEM approaches. So the regularity analysis of solutions is important also from that point of view.

\subsection*{Acknowledgements}
This work was supported by the Research Council of Finland (Flagship of Advanced Mathematics for Sensing Imaging and Modelling grant 358944).


\end{document}